\newtheorem{thm}{Theorem}[section]
\newtheorem{lem}[thm]{Lemma}
\theoremstyle{definition}
\newtheorem{defn}[thm]{Definition}
\newtheorem{rem}[thm]{Remark}
\newtheorem{exmp}[thm]{Example}
\newtheorem{cor}[thm]{Corollary}
\newtheorem{que}[thm]{Question}
\numberwithin{equation}{section}
\newcommand{\N}{\mathbb{N}}
\newcommand{\Z}{\mathbb{Z}}
\newcommand{\Zp}{\mathbb{Z}_+}
\newcommand{\R}{\mathbb{R}}
\newcommand{\set}[1]{\{#1\}}
\newcommand{\eps}{\varepsilon}
\newcommand{\ra}{\rightarrow}
\newcommand{\dd}{\varrho}
\DeclareMathOperator{\diam}{diam}
\DeclareMathOperator{\density}{card}
\begin{document}
\title[On almost specification and average shadowing\ldots]{On almost specification and average shadowing properties}

\author{Marcin Kulczycki}\address[M. Kulczycki]{Faculty of Mathematics and Computer Science, Jagiellonian University, ul. \L o\-jasiewicza 6, 30-348 Krak\'ow, Poland}\email{Marcin.Kulczycki@im.uj.edu.pl}
\author{Dominik Kwietniak}\address[D. Kwietniak]{Faculty of Mathematics and Computer Science, Jagiellonian University, ul. \L o\-jasiewicza 6, 30-348 Krak\'ow, Poland}\email{dominik.kwietniak@uj.edu.pl}
\author{Piotr Oprocha}\address[P.~Oprocha]{AGH University of Science and Technology, Faculty of Applied Mathematics, al. A. Mickiewicza 30, 30-059 Krak\'ow, Poland\\and Institute of Mathematics, Polish Academy of Sciences, ul. \'Sniadeckich 8, 00-956 Warszawa, Poland.} \email{oprocha@agh.edu.pl}

\begin{abstract}
In this paper we study relations between almost specification property, asymptotic average shadowing property and average shadowing property
for dynamical systems on compact metric spaces.
We show implications between these properties and relate them to other important notions such as shadowing, transitivity, invariant measures, etc.
We provide examples that compactness is a necessary condition for these implications to hold. As a consequence of our methodology we
also obtain a proof that limit shadowing in chain transitive systems implies shadowing.
\end{abstract}
\subjclass[2000]{37B05 (primary), 34D05, 37D45 (secondary)}\keywords{average shadowing property, asymptotic average shadowing property, shadowing, pseudo-orbit, chain mixing}\maketitle

\section{Introduction}

Studies on shadowing and specification, originating with the works of Anosov and Bowen, have been developing parallel
to the theory of hyperbolic systems. In a crude sense, one may say that these notions are similar. The common goal
is to find a true trajectory near an approximate one. They only differ in understanding what constitutes an approximate trajectory. For shadowing one traces a pseudo-orbit, while for specification one follows arbitrarily assembled finite pieces of orbits with a true orbit. A template definition for any generalization of shadowing (or specification) might be:
every approximate orbit can be traced by a true one. Moreover, given a quantitative methods of measuring how well an
approximate orbit resembles a true trajectory, and how closely it is traced (followed, reproduced) by an orbit of some point,
we may restate our template definition: for every $\varepsilon$ there is a $\delta$ such that every $\delta$-approximate orbit
can be traced with error not greater than $\varepsilon$. This template is a base for the generalizations
of both notions we investigate here: the almost specification property, the average shadowing property, and the asymptotic average shadowing property.
Pilyugin's book \cite{Pil} is a good general reference on shadowing and its generalizations, while Palmer's \cite{Palmer} presents their applications. Properties of systems with specification are described in \cite{DGS}.

At the end of 1980s Blank introduced the notion of average pseudo-orbits and proved that certain kinds of perturbed hyperbolic systems have the average shadowing property (see \cite{Blank-etds,Blank-88}).
Average pseudo-orbits arise naturally in the realizations of independent Gaussian random perturbations
with zero mean and in the investigations of the most probable orbits of the dynamical system with general Markov perturbations, etc. (see \cite[p. 20]{Blank2}). It is proved in \cite[Theorem 4]{Blank-etds} that if $\Lambda$ is a basic set
of a diffeomorphism $f$ satisfying Axiom A, then $f|_\Lambda$ has the average shadowing property.
The notion gained considerable attention of the scientific community, see \cite{Blank-etds,Blank-88,Blank2,Blank-book,Niu,Sakai-Rocky,Sakai-hyper,Sakai-top,Zhang}.
In \cite{Sakai-Rocky} Sakai analyzed the dynamics of diffeomorphisms satisfying the average shadowing property on a two-dimensional closed
manifold. Later, in \cite{Sakai-hyper,Sakai-top}, he compared various shadowing properties of positively expansive maps.
The results of \cite{Sakai-top} were generalized and completed in \cite{KwOp}.
In \cite{Niu} Niu proved that if $f$ has the average shadowing property and the minimal points of $f$ are dense in $X$, then $f$ is weakly mixing
and totally strongly ergodic.

The next property we consider is the \emph{asymptotic average shadowing} introduced by Gu in \cite{Gu}. Gu followed the same scheme as
Blank, but with limit shadowing instead of shadowing as the starting point for generalization.
The asymptotic average shadowing property was examined, inter alia,  in \cite{KuOp1,KuOp2}. It was proved
that there is a large class of systems with the asymptotic average shadowing property, including all mixing
maps of the unit interval and their Denjoy extensions.

More recently, Climenhaga and Thompson (\cite{CT2,T}), inspired by the work of Pfister and Sullivan \cite{PS},
examined some properties of systems with the almost specification property, which in turn
generalizes the notion of specification. As all beta shifts have the almost specification property their results apply to those
important symbolic systems.

We believe that techniques and notions described above deserve deeper study and the results scattered through the literature should be put into a unified framework.
Therefore our main goal is to explore the general properties of systems possessing generalized shadowing and/or specification in the abstract setting.

It follows from our work that these generalizations are related much closer than the original notions. Specification is well known to imply
almost specification, and there are examples of systems with almost specification, but without specification. We show that the almost specification property implies asymptotic average shadowing property (Theorem \ref{thm:almost-spec-implies-aasp}), which in turn implies average shadowing property (Theorem \ref{thm:aasp-implies-asp}). We do not know whether the converse implications are true when the space under consideration is compact, which is the usual setting (Question \ref{q:converse_implications}). Nevertheless, we are able to give examples of maps on noncompact metric spaces that either have the average shadowing property, but not the asymptotic average shadowing property (Example \ref{example1}), or have the asymptotic average shadowing property, but not the average shadowing property (Example \ref{example2}).

We also explore recurrence properties of dynamical systems with the average shadowing property. We prove that every dynamical system with the almost specification property and full invariant measure (a measure which is positive on each nonempty open set) has a dense set of minimal points (Theorem \ref{thm:dense_minimal}) and that every dynamical system with the average shadowing property and full invariant measure is topologically weakly mixing (Theorem \ref{thm:measure+asp-imply-wm}). Moreover, we show that if the supports of invariant measures are not dense, then there is no recurrence property that is implied by the average shadowing property (see Section~\ref{sec:examples} for multiple examples of this kind). In Section~\ref{sec:sec_measure_center} we prove that $f$ has the average shadowing property (almost specification property, asymptotic average shadowing property) provided $f$ restricted to its measure center (the closure of the union of all supports of $f$-invariant measures) has the average shadowing property (almost specification property, asymptotic average shadowing property).

As a byproduct of our methodology we obtain a little bit surprising result (Theorem~\ref{limsh_chaintrans}) that limit shadowing in chain transitive systems implies shadowing (and so transitivity).
It was proved recently by Lee and Sakai \cite{LiSakai} that expansive systems with shadowing have limit shadowing (in fact they have the so-called strong limit shadowing property, which is
stronger than shadowing and limit shadowing together as shown in \cite[Example 3.5]{BGO}). There are also known examples of systems with limit shadowing but without
shadowing (see \cite[Example 1.21]{Pil}, which may be generalized to a large class of homeomorphisms on $[0,1]$).
By the above evidence it is natural to expect that limit shadowing is a weaker property than shadowing property. In fact Theorem~\ref{limsh_chaintrans} and Theorem~\ref{thm:aasp-implies-asp} are completely opposite to the results that the authors were aiming to prove when they started working on the topics included in this paper.

\section{Preliminaries}\label{prelims}
Let $\N=\{0,1,\ldots\}$ denote the set of natural numbers and let $\Zp, \Z_-$ denote the set of positive and negative  integers, respectively.
For any $A\subset\mathbb{N}$ and a set $J\subset\N$ we let $\density(A\mid J)$ denote the cardinality of $A\cap J$. Given $n>0$ we write $\density(A\mid n)$ for $\density(A\mid J)$ with $J=\{0,1,\ldots,n-1\}$.
We define the \emph{lower} and \emph{upper asymptotic density} of a set $J\subset \N$ as
\[
d_*(J)=\liminf_{n\ra\infty}\frac{1}{n}\density(J\mid n)\quad\text{and}\quad d^*(J)=\limsup_{n\ra\infty}\frac{1}{n}\density(J\mid n),
\]
respectively. The set $J$ is said to have the \emph{asymptotic density} $d(J)$ provided that $d_*(J)=d^*(J)=d(J)$.
A set $J\subset\N$ has \emph{positive upper Banach density} if and only if for some sequence of integers $\{k_n\}_{n=1}^\infty$ we have
\[
\limsup_{n\ra\infty}\frac{1}{n}\density(J\mid \{k_n,k_n+1,\ldots,k_n+n-1\})>0.
\]
A set $A\subset\N$ is \emph{thick} if it contains arbitrarily long  sequences of consecutive natural numbers.

Let $J\subset \N$ be a set such that $\N\setminus J$ is unbounded. Let $\set{a_i}_{i=0}^\infty$ be a sequence of real numbers. If there is $b\in \R$ such that the sequence obtained from $\set{a_i}_{i=0}^\infty$ by deleting the terms with indices from $J$ has limit $b$, then we write
\[\lim_{i\not\in J} a_i=b.\]

We record the following Lemma for further reference.
\begin{lem}[{\cite{Walters}{, Thm 1.20}}]\label{Walters}
Let $\{a_i\}_{i=0}^\infty$ be a bounded sequence of non-negative real numbers. The following conditions are equivalent:
\begin{enumerate}
\item $\lim_{n\rightarrow\infty}\frac{1}{n}\Sigma_{i=0}^{n-1}a_i=0$, \label{lemPW:1}
\item There exists a set $J\subset \N$ such that $d(J)=0$ and $\lim_{n\not\in J}a_n=0$.\label{lemPW:2}
\end{enumerate}
\end{lem}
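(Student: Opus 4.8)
The plan is to prove the two implications separately; the reverse direction \eqref{lemPW:2}$\Rightarrow$\eqref{lemPW:1} is routine, while \eqref{lemPW:1}$\Rightarrow$\eqref{lemPW:2} carries the real content through a diagonal construction of the exceptional set. For \eqref{lemPW:2}$\Rightarrow$\eqref{lemPW:1}, fix a bound $M$ with $0\le a_i\le M$ for all $i$. Given $\eps>0$, use $\lim_{n\not\in J}a_n=0$ to choose $N$ so that $a_i<\eps$ whenever $i\ge N$ and $i\not\in J$. I would then split the average $\frac1n\sum_{i=0}^{n-1}a_i$ according to whether the index lies in $J$: the indices outside $J$ contribute at most $\eps$ up to a vanishing boundary term from the first $N$ entries, while the indices inside $J$ contribute at most $M\cdot\frac1n\density(J\mid n)$, which tends to $0$ because $d(J)=0$. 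Letting $n\to\infty$ and then $\eps\to0$ yields the Cesàro limit $0$.

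For \eqref{lemPW:1}$\Rightarrow$\eqref{lemPW:2}, for each integer $k\ge1$ set $J_k=\{\,i\in\N : a_i\ge 1/k\,\}$, so that $J_1\subseteq J_2\subseteq\cdots$. Each $J_k$ has density zero, since $\frac1k\cdot\frac1n\density(J_k\mid n)\le\frac1n\sum_{i=0}^{n-1}a_i\to0$. The obstruction is that $\bigcup_k J_k=\{\,i:a_i>0\,\}$ may have positive density or even equal all of $\N$, so one cannot simply take the union. Instead I would fix a rapidly increasing sequence $N_1<N_2<\cdots$ with $\frac1n\density(J_k\mid n)<1/k$ for all $n\ge N_k$, and define $J$ by gluing blocks: $J\cap[N_k,N_{k+1})=J_k\cap[N_k,N_{k+1})$ for each $k$, with $J\cap[0,N_1)=\emptyset$.

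To check $d(J)=0$, the nesting $J_j\subseteq J_k$ for $j\le k$ guarantees that for $N_k\le n<N_{k+1}$ one has $J\cap[0,n)\subseteq[0,N_1)\cup\bigl(J_k\cap[0,n)\bigr)$, whence $\density(J\mid n)\le N_1+\density(J_k\mid n)<N_1+n/k$; dividing by $n$ and letting $n\to\infty$ (so $k\to\infty$ as well) gives $d(J)=0$, which in particular makes $\N\setminus J$ infinite, so the notation $\lim_{n\not\in J}a_n$ is meaningful. To check $\lim_{n\not\in J}a_n=0$, given $\eps>0$ pick $k$ with $1/k<\eps$; any $n\ge N_k$ with $n\not\in J$ lies in a block $[N_m,N_{m+1})$ with $m\ge k$, and since $J$ agrees with $J_m$ there, $n\not\in J_m$ forces $a_n<1/m\le1/k<\eps$.

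The main obstacle is precisely this block-gluing construction and the verification that $d(J)=0$. The naive alternatives fail: a single $J_k$ does not capture convergence to $0$, while the full union need not have density zero. The crux is therefore the quantitative interplay between the thresholds $1/k$ and the cutoffs $N_k$, arranged so that $J$ is simultaneously sparse (density zero) and captures every index where $a_i$ fails to be eventually small.
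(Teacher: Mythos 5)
Your proof is correct. The paper does not prove this lemma at all --- it simply cites Walters, Theorem 1.20 --- and your argument (the level sets $J_k=\{i:a_i\ge 1/k\}$ each of density zero, glued along a rapidly increasing sequence of cutoffs $N_k$) is precisely the standard proof found there, with both directions and the verification that $\N\setminus J$ is unbounded handled correctly.
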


By ``a map'' we always understand a continuous map. Given a metric space $(X,\dd)$ and a map $f\colon X\ra X$ we call the pair $(X,f)$ a \emph{dynamical system}. Throughout this paper, unless stated otherwise, we assume that a dynamical system $(X,f)$ on a compact space $X$ is given.

A dynamical system $(Y,g)$ is a \emph{factor} of a dynamical system $(X,f)$ if there is a continuous surjection $\pi\colon X\ra Y$ such that $\pi \circ f=g\circ \pi$.

The \emph{orbit of $x\in X$} is the set $\set{f^n(x)\: :\: n\in \N}$. We say that $f$ is \emph{minimal} if the orbit of every $x\in X$ is a dense subset of $X$. The map $f$ is \emph{transitive} (respectively \emph{mixing}) if for any pair of nonempty open sets $U, V \subset X$ there exists $n > 0$ ($N>0$) such that $f^n(U) \cap V\neq \emptyset$ (for all $n\ge N$, respectively). We say that $f$ is \emph{totally transitive} if $f^n$ is transitive for all $n\geq 1$. The map $f$ is \emph{weakly mixing} if $f\times f$ is transitive on $X\times X$.

A pair $(x,y)\in X\times X$ is \emph{proximal} if
\[
\liminf_{n\ra\infty}\dd(f^n(x),f^n(y))=0.
\]
A pair $(x,y)\in X\times X$ is \emph{distal} if it is not proximal. We say that a pair $(x,y)\in X\times X$ is \emph{diagonal} if $x=y$. A dynamical system $(X,f)$ is
\begin{itemize}
\item \emph{proximal} if all pairs in $X\times X$ are \emph{proximal},
\item \emph{distal} if all non-diagonal pairs in $X\times X$ are \emph{distal}.
\end{itemize}
Compactness of $X$ implies that these properties are independent of the choice of equivalent metric for $X$.

The map $f$ is \emph{equicontinuous} if for every $\eps > 0$, there exists a $\delta > 0$ such that $\dd(f^n(x), f^n(y)) < \eps$ for all $n>0$ and all $x,y\in X$ such that $\dd(x,y) < \delta$. Every dynamical system $(X,f)$ has a \emph{maximal equicontinuous factor} $(X_{\text{eq}},f_{\text{eq}})$. That is, $(X_{\text{eq}},f_{\text{eq}})$ is equicontinuous, and every equicontinuous factor of $(X,f)$ is a factor of $(X_{\text{eq}},f_{\text{eq}})$.

Let $I=\{0,\ldots,n\}$ or $I=\N$. A sequence $\set{x_i}_{i\in I}$ is called a \emph{$\delta$-pseudo-orbit} of $f$ (from $x_0$ to $x_n$ and of \emph{length} $n$ if $I=\{0,\ldots,n\}$) if $\dd(f(x_{i-1}),x_{i})<\delta$ for every positive $i\in I$.
A sequence $\{x_i\}_{i=0}^\infty\subset X$ is an \emph{asymptotic pseudo-orbit} of $f$ provided that
$$
\lim_{n\rightarrow\infty}\dd(f(x_i),x_{i+1})=0.
$$
We say that $f$ is \emph{chain transitive} if for any $\delta>0$ and for any points $x,y\in X$ there is a  $\delta$-pseudo-orbit from $x$ to $y$. A map $f$ is \emph{chain mixing} if for any $\delta > 0$ and any $x, y \in X$ there is an integer $N>0$ such that for every $n\geq N$ there is a finite $\delta$-pseudo-orbit from $x$ to $y$ of length $n$. This is equivalent to $f\times f$ being chain transitive (see \cite{RichWise}). Given sets $A,B\subset X$ we define
the \emph{set of transition times from  $A$ to $B$} by
\[
N(A,B)=\{n>0: f^n(A)\cap B \neq\emptyset\}.
\]
If $x\in X$, then $N(x,B)=N(\{x\},B)=\{n>0: f^n(x)\in B\}$ denotes the
\emph{set of visiting times}. Note that there are no universally accepted names
for the sets $N(A,B)$ and $N(x,B)$. Some authors (see, e.g., \cite{LiJian}) prefer to call $N(A,B)$
the set of \emph{hitting times of $A$ and $B$}, and $N(x,B)$ the set of times
\emph{$x$ enters into $B$}, respectively.

We let $M(X)$ denote the space of all Borel probability measures on $X$. We say that a measure $\mu\in M(X)$ is \emph{invariant for} $f\colon X\ra X$ if $\mu(A)=\mu(f^{-1}(A))$ for any Borel set $A\subset X$.

The classical Krylov-Bogolyubov theorem guarantees that every compact dynamical system $(X,f)$ has at least one invariant measure. If the system has exactly one invariant measure then we say that it is \emph{uniquely ergodic}.

We say that a set $Y\subset X$ is \emph{measure saturated} if it is contained in the closure of the union of the topological supports of invariant measures, or equivalently, if for every open set $U$ such that $U\cap Y\neq\emptyset$ there exists an invariant measure $\mu$ such that $\mu(U)>0$. The \emph{measure center of $f$} is the largest measure saturated set.

We say that an open set $U\subset X$ is \emph{universally null} if $\mu(U)=0$ for any invariant measure $\mu$.
The \emph{measure center} of $f$ is the complement of the union of all universally null sets.

Given a nonempty Borel set $A$ and $n>0$ we define
\[
\eta(n,A)=\max_{x\in X}\density(N(x,A)\mid n).
\]
We note that $\eta(n+m,A)\leq \eta(n,A)+\eta(m,A)$ for all $n,m>0$, hence $\{\eta(n,A)\}_{n=1}^\infty$ is a subadditive sequence, and we may define
the \emph{visit frequency in $U$} as
\[
\xi(A)=\lim_{n\to\infty}\frac{\eta(n,A)}{n}=\inf_{n>0}\frac{\eta(n,A)}{n}.
\]

The following lemma follows from the ergodic theorem, but here we present a direct topological proof inspired by \cite{HLY}.

\begin{lem}\label{lem:density}
If $(X,f)$ is a compact dynamical system, then for every nonempty open set $U\subset X$ there exists a point $x\in X$
such that
\[
d(N(x,U))=\xi(U).
\]
\end{lem}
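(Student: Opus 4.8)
The plan is to split the equality $d(N(x,U))=\xi(U)$ into an easy upper bound valid for \emph{every} point and a hard lower bound that must be realized by a single well-chosen point. Write $c=\xi(U)$ and recall from the definition that $c=\inf_{n>0}\eta(n,U)/n$. For any $x\in X$ and any $n$ we have $\density(N(x,U)\mid n)\le\eta(n,U)$, so dividing by $n$ and passing to the upper limit gives $d^*(N(x,U))\le c$ for all $x$. Consequently it suffices to produce one point $x^*$ with $d_*(N(x^*,U))\ge c$, because then $c\le d_*(N(x^*,U))\le d^*(N(x^*,U))\le c$ forces the density to exist and equal $c$. (If $c=0$ every point works, so assume $c>0$.)

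First I would manufacture points that are good on long finite windows. For large $N$ pick $z_N$ with $\density(N(z_N,U)\mid N)=\eta(N,U)\ge cN$, and let $S_k=\density(N(z_N,U)\mid k)$ count the visits before time $k$. Fix $\eps>0$ and set $T_k=S_k-(c-\eps)k$; then $T_0=0$ and $T_N\ge\eps N$. Choosing $j$ to be the largest minimizer of $T$ on $\{0,\dots,N\}$ gives $S_{j+n}-S_j\ge(c-\eps)n$ for all $0\le n\le N-j$, i.e. the shifted point $w_N=f^{j}(z_N)$ satisfies $\density(N(w_N,U)\mid n)\ge(c-\eps)n-1$ for every window $n\le N-j$. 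Since each increment of $T$ is at most $1$ while $T$ must climb from $T_j\le0$ up to $T_N\ge\eps N$, the usable window length $N-j\ge\eps N$ tends to infinity. Thus for every $\eps$ I obtain points whose visit counts to $U$ stay above the line $(c-\eps)n$ on arbitrarily long initial segments.

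The remaining step is a compactness limit converting these finite-window guarantees into an infinite one. The clean way is to count visits to the closure: the function $x\mapsto\#\{0\le i<n:f^i(x)\in\overline U\}$ is upper semicontinuous, so its superlevel sets are closed; the points $w_N$ (using $U\subseteq\overline U$) populate a nested family of such closed sets with thresholds $(c-\eps_\ell)n$, $\eps_\ell\downarrow0$, and compactness of $X$ yields a point $x^*$ with $d_*(N(x^*,\overline U))\ge c$. I expect the genuine obstacle to be exactly the passage from $\overline U$ back to the open set $U$: because the visit count to the \emph{open} set $U$ is only \emph{lower} semicontinuous, a limit of points can slide honest visits onto the boundary $\partial U$, so a priori one only controls $\overline U=U\cup\partial U$. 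To finish I would show that $x^*$ can be chosen with $d^*(N(x^*,\partial U))=0$, whence $d_*(N(x^*,U))=d_*(N(x^*,\overline U))\ge c$; here one must exploit that $\partial U$ is closed and nowhere dense and arrange the limit so that the orbit of $x^*$ approaches $\partial U$ without spending positive density on it. It is essential that this be done without shrinking $U$ to a compactly contained subset, since $\xi$ is not inner regular: for $f(x)=x^2$ on $[0,1]$ and $U=(0,1)$ one has $\xi(U)=1$ but $\xi(V)=0$ for every $V$ with $\overline V\subset U$, and the optimal orbits are precisely the transient ones creeping toward the boundary fixed point.
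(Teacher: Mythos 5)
Your plan coincides in outline with the paper's own proof: the a priori upper bound $d^*(N(x,U))\le\xi(U)$ valid for every $x$, the production, for each $\eps>0$, of points whose visit counts to $U$ dominate $(\xi(U)-\eps)j$ on every initial window $j$ up to an arbitrarily long horizon, and then a compactness limit. Your largest-minimizer argument for $T_k=S_k-(c-\eps)k$ is correct and is a cleaner route to those finite-horizon points than the paper's argument by contradiction (which decomposes the orbit of a maximizer of $\density(N(\cdot,U)\mid m)$ into short blocks of low visit frequency and contradicts $\eta(m,U)\ge m\,\xi(U)$); the two constructions produce the same intermediate objects.

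The proposal is nevertheless incomplete, and the gap is located at exactly the right place. Your final step --- choosing the limit point $x^*$ so that $d^*(N(x^*,\partial U))=0$, hence passing from $\overline{U}$ back to $U$ --- is announced but not carried out, and nowhere density of $\partial U$ cannot help: nothing prevents the limiting orbit from converging to a fixed point on $\partial U$. In fact no completion of this step is possible, because the statement fails as written for general open $U$. Let $X=\{p\}\cup\{x_{n,k}: n\ge 1,\ 0\le k\le 2n\}$ be a compact metric space in which every $x_{n,k}$ is isolated and $x_{n,k}\to p$ as $n+k\to\infty$ (for instance $\dd(x_{n,k},p)=1/(n+k+1)$ with the $x_{n,k}$ placed on distinct rays through $p$ in the plane). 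Define $f(p)=p$, $f(x_{n,k})=x_{n,k+1}$ for $k<2n$, and $f(x_{n,2n})=p$; then $f$ is continuous, since it does not increase the distance to $p$. Put $U=\{x_{n,k}: k\le n\}$, an open set with $p\notin U$. The point $x_{m,0}$ witnesses $\eta(m,U)\ge m-1$ for every $m$, so $\xi(U)=1$; yet every orbit is eventually equal to $p\notin U$, so $N(x,U)$ is finite and $d(N(x,U))=0$ for every $x\in X$. Thus the supremum over $x$ of $d^*(N(x,U))$ can be strictly smaller than $\xi(U)$.

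This also locates the error in the paper's proof, which silently performs the step you refused to perform: from $\xi(U)-1/k\le \tfrac1k\density(N(y_n,U)\mid k)$ for all large $n$ it concludes the same inequality for $x=\lim y_n$, which requires upper semicontinuity of $y\mapsto\density(N(y,U)\mid k)$; for an open set this map is only lower semicontinuous, and in the example above the $y_n$ converge to $p$, where the visit count drops to $0$. (The conclusion of the lemma, and of Theorem~\ref{thm:xi_univnull} which rests on it, survives if $U$ is replaced by $\overline{U}$ on the existence side; the uses made of these statements later in the paper rely either on the ergodic-theorem direction --- an invariant measure with $\mu(U)>0$ yields a generic point with $d(N(x,U))=\mu(U)>0$ --- or, as in Corollary~\ref{cor:measure_cent}, on open sets whose closures are disjoint from the measure center, so they appear to be unaffected.)
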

\begin{proof}
First observe that if there is a point $x\in X$ such that $N(x,U)$ has positive upper Banach density then $\xi(U)>0$.
Therefore $\xi(U)=0$ implies that $N(x,U)$ has upper Banach density zero for every $x\in X$, hence
$d(N(x,U))=0$ for every $x\in X$.

Next, let us assume that $\xi(U)>0$.
For every $n>0$ let $x_n$ be a point in $X$ such that
\[
\density(N(x_n,U)\mid n)=\max_{x\in X}\density(N(x,U)\mid n)=\eta(n,U).
\]
We claim that for each integer $n>0$ there exists a point $y_n\in X$ such that
\begin{equation}\label{eq:star}
\xi(U)-1/n
\leq (1/j) 
\cdot\density(N(y_n,U)\mid j) \quad\text{for}\,1\leq j \leq n.
\end{equation}
For the proof of the claim, assume on the contrary that \eqref{eq:star} does not hold for some $k>0$.
Then $\xi(U)-1/k>0$. Put $m=k^2+1$ and let
$z=x_m$ be a point defined above so that $\density\big(N(z,U)\mid m\big)=\eta(m,U)$.
As we assumed that our claim fails we can find a strictly increasing sequence
of integers $\{l(s)\}_{s=0}^\infty$ such that $l(0)=0$, $0<\lambda_j=l({j})-l(j-1)\leq k$, and
\begin{equation}\label{eq:i}
\frac{1}{\lambda_j}
\density\big(N(f^{l(j-1)}(z),U\big)\mid \lambda_j))
< \xi(U)-\frac{1}{k},
\end{equation}
for every $j=1,2,\ldots$. Let $t>0$ be such that $ l(t) \leq m < l(t+1)$. Then
\begin{eqnarray}\label{eq:ii}
&&\density\big(N(z,U)\mid m\big)=\\
&&\quad\quad\quad=\bigg(\sum_{j=0}^{t}\density\big(N(f^{l(j-1)}(z),U)\mid \lambda_j\big)\bigg) + \density\big(N(f^{l(t)}(z)\mid m-l(t)\big)\nonumber.
\end{eqnarray}
Then by \eqref{eq:i} and \eqref{eq:ii} we obtain that
\begin{align*}
m\xi(U)  &\leq \eta(m,U)= \density\big(N(z,U)\mid m\big) = \\
         &= \bigg(\sum_{j=0}^{t}\density\big(N(f^{l(j-1)}(z),U\mid \lambda_j\big)\bigg) + \density\big(N(f^{l(t)}(z)\mid m-l(t)\}\big)\\
         &\leq \bigg[\sum_{j=0}^{t}\lambda_j \bigg(\xi(U)-\frac{1}{k}\bigg)\bigg] + k < m\bigg(\xi(U)-\frac{1}{k}\bigg)+k = m\xi(U) - \frac{1}{k}  <m\xi(U),
\end{align*}
which is a contradiction, so our claim holds.

By the claim, for each integer $n>0$ we can find a point $y_n$ such that \eqref{eq:star} holds. Since $X$ is compact,
without loss of generality we may assume that
$\{y_n\}_{n=1}^\infty$ converges to some point $x\in X$. Observe that for every $k>0$ and every $n\geq k$ we have
that
$\xi(U)-1/k
\leq (2/k)
\density(N(y_n,U)\mid k)$, which implies that
\[
\xi(U)-1/k
\leq (1/k)\cdot
\density(N(x,U)\mid k)\leq
 (1/k)\cdot\eta(k,U).
\]
Passing to the limit with $k\to\infty$ we obtain $\xi(U)\leq d_*(N(x,U))\leq d^*(N(x,U))\leq \xi(U)$, which finishes the proof.
\end{proof}

We can now characterize universally null open sets as the sets with visit frequency $\xi$ equal to zero.

\begin{thm}\label{thm:xi_univnull}
Let $(X,f)$ be a compact dynamical system.
An open set $U\subset X$ is universally null if and only if $\xi(U)=0$, or equivalently $d(N(x,U))=0$ for every $x\in X$.
\end{thm}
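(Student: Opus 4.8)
The plan is to prove the equivalence of three conditions on the open set $U$: that $U$ is universally null; that $\xi(U)=0$; and that $d(N(x,U))=0$ for every $x\in X$. The equivalence of the last two is immediate from Lemma \ref{lem:density}. Indeed, if $\xi(U)=0$ then, since $\density(N(x,U)\mid n)\le\eta(n,U)$ for every $x$ and $n$, dividing by $n$ and letting $n\to\infty$ forces $d^*(N(x,U))=0$, hence $d(N(x,U))=0$ for all $x$. Conversely, Lemma \ref{lem:density} produces a single point $x$ with $d(N(x,U))=\xi(U)$, so if all such densities vanish then in particular $\xi(U)=0$.

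The implication ``$\xi(U)=0\Rightarrow U$ universally null'' I would obtain from the sharper estimate $\mu(U)\le\xi(U)$, valid for every invariant measure $\mu$. To see this, note that invariance gives $\mu(U)=\int_X\mathbf 1_U(f^k x)\,d\mu(x)$ for every $k\ge 0$; averaging over $k=0,\dots,n-1$ yields $\mu(U)=\int_X\frac1n\sum_{k=0}^{n-1}\mathbf 1_U(f^k x)\,d\mu(x)$. Since the inner sum is at most $1+\density(N(x,U)\mid n)\le 1+\eta(n,U)$ for every $x$, we get $\mu(U)\le (1+\eta(n,U))/n$, and letting $n\to\infty$ gives $\mu(U)\le\xi(U)$. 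In particular $\xi(U)=0$ forces $\mu(U)=0$ for every invariant $\mu$.

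It remains to prove the converse, that if $U$ is universally null then $\xi(U)=0$; equivalently, that $\xi(U)>0$ produces an invariant measure $\mu$ with $\mu(U)>0$. Here I would start from the point $x$ of Lemma \ref{lem:density} with $d(N(x,U))=\xi(U)>0$, form the empirical measures $\mu_n=\frac1n\sum_{k=0}^{n-1}\delta_{f^k x}$, and use weak* compactness of $M(X)$ to extract a subsequence $\mu_{n_j}$ converging to a measure $\mu$, which is automatically $f$-invariant. By the choice of $x$ we have $\mu_{n_j}(U)\to\xi(U)>0$, and the goal is to transfer this positivity to $\mu$.

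The main obstacle is exactly this last step, deducing $\mu(U)>0$. Weak* convergence only gives the portmanteau inequalities $\mu(U)\le\liminf_j\mu_{n_j}(U)$ for the open set $U$ and $\mu(\overline U)\ge\limsup_j\mu_{n_j}(\overline U)\ge\xi(U)$ for its closure, so a priori all of the limiting mass $\xi(U)$ could be carried by the boundary $\partial U$ rather than by $U$ itself. Ruling out this escape of mass to $\partial U$ is the crux of the argument: I would try to arrange the visiting instants so that the orbit of $x$ returns with positive frequency to a fixed compact part of $U$, so that the closed-set portmanteau estimate lands strictly inside $U$, and I expect establishing that such a localisation is available—rather than any of the preceding soft steps—to be the genuinely hard part of the proof.
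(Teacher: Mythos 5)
Your first two steps are correct, and your derivation of $\mu(U)\le\xi(U)$ directly from invariance is actually cleaner than what the paper does: the paper proves the contrapositive of that implication by passing through the ergodic decomposition and the pointwise ergodic theorem to produce a point $x$ with $d(N(x,U))=\mu_e(U)>0$, whereas your averaging argument is elementary and quantitative. The equivalence of $\xi(U)=0$ with $d(N(x,U))=0$ for all $x$ via Lemma \ref{lem:density} is also exactly as in the paper.

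The remaining direction you do not prove, and this is a genuine gap in your proposal --- but the obstruction you isolate is not a technicality you failed to overcome; it is fatal. The paper's own argument for ``$\xi(U)>0\Rightarrow U$ not universally null'' constructs a functional $L$ on $C(X)$ from Birkhoff averages along a subsequence, obtains an invariant $\mu$ by Riesz representation, and then writes $\mu(U)=\int\chi_U\,d\mu=L(\chi_U)\ge\eps$; but $\chi_U$ is not continuous, so $L(\chi_U)$ is not defined by that construction, and this is precisely the open-set portmanteau failure you describe. Moreover, no localisation of the visits into a compact part of $U$ is available in general: take $X=\{0\}\cup\{2^{-n}:n\in\N\}$ with $f(x)=x/2$ and $U=X\setminus\{0\}$. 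Since $f^{-1}(\{1\})=\emptyset$ and $f^{-1}(\{2^{-n}\})=\{2^{-n+1}\}$, every invariant measure vanishes on each isolated point, so $\delta_0$ is the unique invariant measure and $U$ is universally null; yet $N(1,U)=\Zp$, so $\eta(n,U)=n-1$ and $\xi(U)=1$. Hence the implication, and with it the theorem as stated, fails. What is true (and what Corollary \ref{cor:measure_cent} actually needs) is the closed-set version: if $K$ is closed and some orbit visits $K$ with frequency at least $\eps$ along a sequence of times, then any weak$^*$ limit $\mu$ of the corresponding empirical measures is invariant and satisfies $\mu(K)\ge\limsup_j\mu_{n_j}(K)\ge\eps$; in the corollary one should therefore work with the closed set $\{x:\dd(x,A)\ge\eps/2\}$ rather than with the open set $U$. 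Your instinct that this step is the genuinely hard part was correct --- it is in fact the broken part.
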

\begin{proof}
If $U$ is not universally null, then $\mu(U)>0$ for some $f$-invariant measure $\mu\in M(X)$.
Using ergodic decomposition \cite[p. 153]{Walters} we get
an ergodic measure $\mu_e\in M(X)$ such that $\mu_e(U)>0$. By Pointwise Ergodic Theorem
\cite[Thm. 1.14]{Walters}
there is a point $x\in X$ such that
$d (N(x,U))=\mu_e(U)>0$. Therefore $\xi(U)>0$.

The proof of the other implication follows the same lines as the second part of \cite[Lemma 3.17]{Fur}. We present it for completeness. Assume that $\xi(U)>0$. By Lemma \ref{lem:density} there is a point $x\in X$ and $0<\eps <\xi(U)$ such that
\[
\density\big( N(x,U) \mid n \big) \geq n\eps
\]
for all sufficiently big $n$.
Observe that if we choose any continuous function $F\colon X \to \R$ then there is an increasing function $\sigma\colon \N \to \N$ such that the limit
\[
L(F)=\lim_{n\to \infty}
\frac{1}{\sigma(n)}\sum_{i=0}^{\sigma(n)-1}
F(f^i(x))
\]
exists. This implies that for any sequence of continuous functions $F_k\colon X\ra \R$ we can use a diagonal procedure to find an increasing function $\sigma\colon \N \to \N$ such that $L(F_k)$ exists for every $k$.
As the space $\mathcal{C}(X)$ of all continuous functions from $X$ to $\R$ with the supremum metric is separable
we fix a sequence $\set{F_k}_{k=0}^\infty$ dense in $C(X)$ and choose a sequence $\sigma$ as above.
Then it is elementary to check that with this particular $\sigma$ the number $L(F)$
is well defined for every continuous function $F\colon X \to \R$.
Hence we defined a linear functional $L$ on $\mathcal{C}(X)$.
By the Riesz Representation Theorem there is a measure $\mu\in M(X)$ such that $L(F)=\int F d\mu$, and since $L(F)=L(F\circ f)$, the measure  $\mu$ must be an invariant measure for $f$. But
\[
\mu(U)=\int \chi_U d\mu = L(\chi_U) = \lim_{n\to \infty}
\frac{1}{\sigma(n)}\sum_{i=0}^{\sigma(n)-1}
\density\big(N(x,U)\mid \sigma(n)\big) \geq \eps > 0.
\]
This concludes the proof.
\end{proof}

\begin{cor}\label{cor:measure_cent}
If a set $A$ contains the measure center
of a compact dynamical system $(X,f)$,
then for every $\varepsilon >0$ there exists $N\in\N$ such that for every $x\in X$
and $n\ge N$
we have
$$
\frac{1}{n}\#\set{0\leq i<n \;:\; \dd(f^i(x),A) <\varepsilon} >1-\varepsilon.
$$
\end{cor}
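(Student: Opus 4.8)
The plan is to control how often an orbit can stay far from $A$, and to convert the (a priori pointwise) notion of visit frequency into a bound that is uniform in the starting point. Fix $\varepsilon>0$ and consider the closed set $U_\varepsilon=\{y\in X:\dd(y,A)\geq\varepsilon\}$, the complement of the open $\varepsilon$-neighbourhood of $A$; the quantity to be estimated is exactly the number of indices $0\leq i<n$ with $f^i(x)\in U_\varepsilon$. Since every $y\in U_\varepsilon$ has $\dd(y,A)>0$, we have $U_\varepsilon\cap A=\emptyset$, so $U_\varepsilon\subseteq X\setminus A\subseteq X\setminus(\text{measure center})$, because $A$ contains the measure center. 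By the definition of the measure center as the complement of the union of all universally null open sets, those sets form an open cover of $U_\varepsilon$.

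Next I would invoke compactness. As $U_\varepsilon$ is a closed subset of the compact space $X$, it is compact, so finitely many universally null open sets $W_1,\ldots,W_k$ cover it. Their union $W=W_1\cup\cdots\cup W_k$ is open, and $\mu(W)\leq\sum_j\mu(W_j)=0$ for every invariant measure $\mu$, so $W$ is itself universally null and $U_\varepsilon\subseteq W$. By Theorem \ref{thm:xi_univnull} this yields $\xi(W)=0$.

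Finally I would extract uniformity from $\xi(W)=0$. Since $\{\eta(n,W)\}$ is subadditive with $\eta(n,W)/n\to\xi(W)=0$, there is an $N$ such that $\eta(n,W)<(\varepsilon/2)n$ and $(\varepsilon/2)n\geq 1$ for all $n\geq N$. For any $x\in X$ and $n\geq N$, because $\eta(n,W)$ is the maximum over all starting points of $\density(N(\cdot,W)\mid n)$, the number of $0\leq i<n$ with $f^i(x)\in W$ is at most $\eta(n,W)+1<\varepsilon n$, where the $+1$ accounts for the index $i=0$ excluded in the definition of $N(x,W)$. As $U_\varepsilon\subseteq W$, the same bound holds for visits to $U_\varepsilon$, so $\#\{0\leq i<n:\dd(f^i(x),A)<\varepsilon\}>n-\varepsilon n=(1-\varepsilon)n$, which gives the claim after dividing by $n$.

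The main obstacle is the passage from a pointwise statement to a uniform one: on its own, $\xi(W)=0$ only controls the density of the visiting-time set of an individual point, whereas the corollary demands a single $N$ valid simultaneously for all $x$. The two observations that resolve this are that $\eta(n,W)$ is defined as a maximum over all starting points, and that compactness of $U_\varepsilon$ collapses the possibly infinite cover by universally null open sets into one universally null open set $W$; together they let the convergence $\eta(n,W)/n\to 0$ furnish the required uniform estimate.
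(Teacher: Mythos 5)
Your proof is correct and follows essentially the same route as the paper: both reduce the statement to producing a universally null open set containing all points at distance at least (a fraction of) $\varepsilon$ from $A$, apply Theorem~\ref{thm:xi_univnull} to get visit frequency zero, and then exploit that $\eta(n,\cdot)$ is a maximum over all starting points so that $\eta(n,\cdot)/n\to 0$ gives a bound uniform in $x$. The only difference is that the paper takes the open set $\{x:\dd(x,A)>\varepsilon/2\}$ directly (it is universally null because it misses every support of an invariant measure), whereas you reach an equivalent set via a compactness and finite-subcover argument --- a harmless detour --- and you are in fact slightly more careful than the paper about the index $i=0$ being excluded from $N(x,W)$.
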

\begin{proof}
Fix any $\eps>0$ and let $U=\set{x : \dd(x,A)>\eps/2}$. Since $A$ contains the measure center, $U$ is a universally null open set. By Theorem~\ref{thm:xi_univnull}
we obtain that $\xi(U)=0$ and so for all sufficiently large $n$ we have
\begin{align*}
\max_{x\in X}\frac{1}{n}\#\set{0\leq i<n \;:\; \dd(f^i(x),A) \geq \eps} &\leq \max_{x\in X}\frac{1}{n}\density(N(x,U)\mid n)\\
 &=\frac{1}{n}\eta(n,U)\leq \xi(U)+\frac{\eps}{2}<\eps.
\end{align*}
\end{proof}

The following definitions were introduced by Blank in \cite{Blank-etds,Blank-88}:

\begin{defn}\label{def:aver_po}
Given $\delta>0$ we say that a sequence $\set{x_n}_{n=0}^\infty$ is a \emph{$\delta$-average-pseudo-orbit of $f$} if there is an integer $N>0$ such that for all $n\geq N$ and $k\geq 0$ the following condition is satisfied:
$$
\frac{1}{n}\sum_{i=0}^{n-1} \dd(f(x_{i+k}),x_{i+k+1})<\delta.
$$
\end{defn}

\begin{defn}
Given $\eps>0$ and $y\in X$ we say that a sequence $\set{x_n}_{n=0}^\infty$ is \emph{$\eps$-shadowed in average by $y$} if
$$
\limsup_{n\ra\infty} \frac{1}{n}\sum_{i=0}^{n-1} \dd(f^i(y),x_i)<\eps.
$$
\end{defn}

\begin{defn}
We say that $f$ has the \emph{average shadowing property} if for every $\eps>0$ there is $\delta>0$ such that every $\delta$-average-pseudo-orbit of $f$ is $\eps$-shadowed in average by some point in $X$.
\end{defn}

The next three definitions were introduced for the first time by Gu in \cite{Gu}:

\begin{defn}
The sequence $\{x_i\}_{i=0}^\infty\subset X$ is an \emph{asymptotic average pseudo-orbit of $f$} provided that $$ \lim_{n\rightarrow\infty}\frac{1}{n}\sum_{i=0}^{n-1}\dd(f(x_i),x_{i+1})=0.$$
\end{defn}

\begin{defn}
The sequence $\{x_i\}_{i=0}^\infty\subset X$ is \emph{asymptotically shadowed in average} by the point $y\in X$ provided that $$\lim_{n\rightarrow\infty}\frac{1}{n}\sum_{i=0}^{n-1}\dd(f^i(y),x_i)=0.$$
\end{defn}

\begin{defn}
The map $f$ has the {\it asymptotic average shadowing property} provided that every asymptotic average pseudo-orbit of $f$ is asymptotically shadowed in average by some point in $X$.
\end{defn}

Pfister and Sullivan \cite{PS} have introduced a property called the \emph{$g$-almost product property}.
Subsequently Thompson proposed renaming this property as the almost specification property.
The only difference between the approach in \cite{PS} and the one presented in \cite{T}
is that the mistake function $g$ can depend also on $\eps$ (in \cite{PS} function $g$ depends on $n$ alone).
The almost specification property can be verified for every $\beta$-shift (see \cite{PS2}).
We follow Thompson, so the version we use here is a priori
weaker than that of Pfister and Sullivan. First we need a few auxiliary definitions.

\begin{defn}
Let $\eps_0 > 0$. A function $g\colon \Zp \times (0, \eps_0] \ra \mathbb{N}$ is
called a \emph{mistake function} if for all $\eps\in(0, \eps_0]$ and all $n \in \Zp$, we have $g(n,\eps) \leq  g(n+1,\eps)$ and
\[
\lim_{n\to \infty}
\frac{g(n, \eps)}{n}= 0.
\]
Given a mistake function $g$, if $ \eps > \eps_0$, then we define $g(n, \eps) = g(n, \eps_0)$ .
\end{defn}

\begin{defn}
Let $g$ be a mistake function and let $\eps>0$. For $n$ large enough for the inequality $g(n, \eps) < n$ to hold we
define the \emph{set of $(g; n, \eps)$ almost full subsets of $\set{0,\ldots,n-1}$} as the family
$I(g; n, \eps)$ consisting of subsets of $\set{0,1,\ldots,n-1}$ with at least $n - g(n,\eps)$ elements, that is,
\[
I(g; n, \eps) := \{\Lambda\subset \set{0,1,\ldots, n - 1} : \#\Lambda \geq n - g(n,\eps)\}.
\]
\end{defn}

\begin{defn}\label{def:lambda:asp}
For a finite set of indices $\Lambda\subset \set{0,1,\ldots, n - 1}$, we define the \emph{Bowen distance between $x,y\in X$ along $\Lambda$} by
$\dd_\Lambda(x, y) = \max\set{\dd(f^j(x), f^j(y)) : j \in\Lambda}$ and the \emph{Bowen ball (of radius $\eps$ centered at $x\in X$) along $\Lambda$}
by $B_\Lambda(x, \eps) = \set{y \in X : \dd_\Lambda(x, y) < \eps}$.
When $g$ is a mistake function and $(n,\eps)$ is such that $g(n,\eps) < n$, we define for $x\in X$ a \emph{$(g;n,\eps)$-Bowen ball of radius $\eps$, center $x$, and length $n$} 
by
\[
B_n(g; x, \eps) := \bigg\{y \in X : y \in B_\Lambda(x, \eps) \text{ for some }\Lambda\in I(g;n,\eps)\bigg\}
=
\bigcup_{\Lambda\in I(g;n,\eps)}
B_\Lambda(x, \eps).
\]
\end{defn}

With the above notation in mind we are in position to state the definition of the almost specification property.

\begin{defn}
A continuous map $f\colon X \ra X$ has the \emph{almost specification
property} if there exists a mistake function $g$ and a function $k_g\colon (0,\infty) \to \N$ such that
for any $m\geq 1$, any $\eps_1,\ldots,\eps_m > 0$, any points $x_1, \ldots, x_m \in X$, and any integers
$n_1 \geq k_{g}(\eps_1),\ldots,n_m \geq k_{g}(\eps_m)$ setting $n_0=0$ and
$$
l_j=\sum_{s=0}^{j-1}n_s,\,\text{for }j=1,\ldots,m
$$
we can find a point $z\in X$ such that for every $j=1,\ldots,m$ we have
$$
f^{l_j}(z)\in B_{n_j}(g;x_j,\eps_j).
$$
In other words,
the appropriate part of the orbit of $z$ $\eps_j$-traces with at most $g(\eps_j,n_j)$ mistakes the orbit of $x_j$.
\end{defn}

\begin{rem}
If the system $(X,f)$ has almost specification with a  mistake function $g$ we may (and usually will) assume that
$k_g(\eps)=n$ implies $g(m,\eps)<m$ for $m\ge n$.
\end{rem}

The function $g$ as above can be interpreted as follows. The integer $g(n, \eps)$ tells
us how many mistakes may occur when we use the almost specification
property to $\eps$-shadow an orbit of length $n$.

Note that Pfister and Sullivan use a slightly different notion of a blowup function instead of the mistake function defined above. The blowup function is not allowed to depend on $\eps$. An example of a function which is a mistake function under definition proposed by Thompson but is not
considered by Pfister and Sullivan is $g(n, \eps) = \lfloor\eps^{-1}\log n\rfloor$. Therefore the almost specification property of Thompson is slightly more general than the $g$-almost product property of Pfister and Sullivan.

Note that, while in our setting the space $X$ is compact, the definitions of the (almost) specification property and the two generalized shadowing properties remain meaningful without this assumption. But in a noncompact setting none of these properties (specification, almost specification, (asymptotic) average shadowing) is an invariant for topological conjugacy, as can be seen from
the example in \cite[Section 7]{KU} and Theorem \ref{thm:summary} below. The reader can find several comments on the specification property and its relationship to the (asymptotic) average shadowing property in \cite{KuOp1,KuOp2, KwOp}.
Moreover, note the following.

\begin{exmp} Let $X = \{a, b\}$ be any two points set with the discrete metric $\rho$ and let $f$ be such that
$f (a) =b$, $f (b) =b$. Then $f$ has the (almost) specification property and (asymptotic) average shadowing property.
\end{exmp}


\section{Connections between almost specification and average shadowing}

\subsection{Chain mixing}

It turns out that if $f$ is surjective, then chain mixing accompanies both the average shadowing and the almost specification properties, as evidenced by the next two lemmas. Note that surjectivity of $f$ is necessary by

\begin{lem}\label{lem:surj-asp-implies-chain-mix}
Let $(X,f)$ be a compact dynamical system.
If $f$ is surjective and has the average shadowing property, then $f$ is chain mixing.
\end{lem}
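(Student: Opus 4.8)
The plan is to show that $f$ is chain mixing by verifying the equivalent condition that $f \times f$ is chain transitive, or more directly, that for any two points $x, y \in X$ and any $\delta > 0$, there exist $\delta$-pseudo-orbits from $x$ to $y$ of all sufficiently large lengths. The key idea is to manufacture a carefully designed sequence that is an average-pseudo-orbit, apply the average shadowing property to obtain a shadowing point, and then extract from the shadowing-in-average conclusion the existence of genuine $\delta$-pseudo-orbits connecting $x$ to $y$.

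\medskip

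First I would fix $\eps > 0$ and take the corresponding $\delta > 0$ from the average shadowing property. Since $f$ is surjective, for any point $w \in X$ I can find a full backward orbit segment, i.e., arbitrarily long finite orbits terminating at $w$; this is where surjectivity is essential and accounts for the remark that it is necessary. The construction I envisage is to build a single sequence that follows the forward orbit of $x$ for a long block, then a backward orbit landing on $y$ (using surjectivity to produce preimages of $y$) for another long block, and to repeat such blocks with the block lengths growing. Because the only ``jumps'' (places where $\dd(f(x_i), x_{i+1})$ is not small) occur at the finitely many junctions between consecutive blocks, and the blocks grow in length, the average $\frac{1}{n}\sum_{i=0}^{n-1}\dd(f(x_{i+k}), x_{i+k+1})$ can be forced below $\delta$ uniformly; hence the sequence is a genuine $\delta$-average-pseudo-orbit.

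\medskip

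Next I would apply the average shadowing property to obtain a point $z$ that $\eps$-shadows this average-pseudo-orbit in average. The crucial extraction step is that if $z$ shadows in average a sequence spending long blocks near the forward orbit of $x$ and long blocks near preimages of $y$, then along some times the orbit of $z$ must come $\eps$-close to $x$ and at later times come $\eps$-close to $y$; concatenating the true orbit of $z$ between such a near-$x$ time and a near-$y$ time, and correcting the two endpoints, yields a $\delta$-pseudo-orbit from $x$ to $y$. By tuning the relative block lengths and positions, I expect to realize connecting pseudo-orbits of every sufficiently large length, which is exactly chain mixing. Letting $\eps \to 0$ makes $\delta$ arbitrarily small, giving the conclusion for all $\delta$.

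\medskip

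The main obstacle I anticipate is the bookkeeping that converts the \emph{average} shadowing conclusion into \emph{pointwise} closeness at prescribed times: shadowing in average with error below $\eps$ guarantees only that the set of bad times has small density, not that any particular time is good. To handle this I would use a Markov-type or pigeonhole argument on densities (in the spirit of Lemma~\ref{Walters}) to locate good times inside each designated block, ensuring that within each long block there is a time at which $\dd(f^i(z), x)$ or $\dd(f^i(z), y)$ is genuinely small. Controlling the exact length of the resulting connecting pseudo-orbit so as to obtain \emph{all} large lengths, rather than merely \emph{some} length, is the delicate part, and I would address it by inserting variable-length near-$y$ padding blocks so that the total gap between a good near-$x$ time and a good near-$y$ time can be adjusted to hit any prescribed large value.
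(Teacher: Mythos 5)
The core of your argument --- the alternating block construction (forward orbit of $x$, then forward orbit of a preimage $y_j\in f^{-n+1}(y)$ landing on $y$, with growing block lengths), the verification that it is a $\delta$-average-pseudo-orbit because jumps occur only at sparse junctions, and the density/pigeonhole extraction of a near-$x$ time followed by a later near-$y_t$ time from the average shadowing conclusion --- is exactly the paper's proof of \emph{chain transitivity}. Up to that point your proposal is sound.

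The genuine gap is in the upgrade from chain transitivity to chain mixing. You propose to obtain $\delta$-pseudo-orbits from $x$ to $y$ of \emph{every} sufficiently large length by ``inserting variable-length near-$y$ padding blocks'' and ``tuning the relative block lengths.'' This does not work as stated: the average shadowing property hands you a single point $z$ for the whole infinite average-pseudo-orbit, and you have no control over \emph{where} inside a given block the good times $p,q,r,s$ fall --- only that they exist somewhere in blocks of positive density. Consequently the length $p+(r-q)+(\text{tail to }y)$ of the resulting chain is some unspecified integer, not one you can prescribe, and re-running the construction with different padding gives you a different, equally uncontrolled length. The paper sidesteps this entirely by a reduction: the average shadowing property passes to $f^n$ for every $n$ (Niu), so every power $f^n$ is chain transitive, and a system all of whose powers are chain transitive is chain mixing (Richeson--Wiseman). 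If you want a complete proof you should either invoke that reduction or supply a genuinely different mechanism for hitting all large lengths; the padding idea by itself is not enough.
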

\begin{proof}
It was proved in \cite{Niu} that if $f$ has the average shadowing property, then so does $f^n$ for every $n>0$.
By \cite{RichWise}, if $f^n$ is chain transitive for every $n>0$, then it is chain mixing.
Therefore it is enough to prove that $f$ is chain transitive.

For simplicity of calculations let us assume that $\diam(X)\leq 1$.
Fix any $\eps>0$ and any points $x,y\in X$. Let $\delta$ be provided by the average shadowing property for $\eps/2$.
Let $n_0\ge 2$ be such that $2/n_0<\delta$ and let $n_{i}=2^{i} n_0$ for $i\ge 1$.
For $j=0,1,2,\ldots$ we use surjectivity to fix a point $y_j\in f^{-n_{2j+1}+1}(y)$ and define a sequence
\begin{multline*}
\xi = (x,f(x),\ldots, f^{n_0-1}(x), y_0, f(y_0),\ldots f^{n_1-1}(y_0),x,f(x),\ldots, f^{n_2-1}(x),\\
y_1, f(y_1),\ldots f^{n_3-1}(y_1),
\ldots,x, f(x),\ldots,f^{n_{2j}-1}(x),y_{j},f(y_{j}),\ldots, f^{n_{2j+1}-1}(y_{j}),\ldots).
\end{multline*}
Let $l(j)=(2^j-1)n_0$. By the definition of $\xi$ we have
$\xi_{l(2i)}=x$, $\xi_{l(2i+2)-1}=y$ and $\xi_{l(2i+1)}=y_i$ for all $i=0,1,2,\ldots$
Therefore $\xi_{l(j)},\ldots,\xi_{l(j+1)-1}$ is the initial segment of length $n_j$ of orbit of $x$ if $j$ is even, and of $y_k$, where $k=(j-1)/2$, if $j$ is odd.

Notice that if we fix any $i\geq 0$ then in the sequence $\xi_i,\ldots, \xi_{i+n_0}$ there is at most one position $j$ with $\dd(f(\xi_j),\xi_{j+1})>0$. Therefore for every $k>n_0$ we have
$$
\frac{1}{k}\sum_{j=i}^{i+k-1} \dd(f(\xi_j),\xi_{j+1})\leq \frac{1}{k}\left(\frac{k}{n_0}+1\right)\diam(X) \leq \frac{2}{n_0}<\delta,
$$
which shows that $\xi$ is an $\delta$-average-pseudo-orbit. Let $z\in X$ be a point which $\eps$-shadows $\xi$ in average. This implies that there are
$p,q\in\N$ such that $\dd(f^p(x),f^q(z))<\eps$ and $r,s,t\in\N$ such that $q<  l(2t)\le r$, $s<n_{2t+1}$, and $\dd(f^r(z),f^s(y_t))<\eps$, for otherwise we would have
$$
\frac{1}{l(j)}\sum_{i=0}^{l(j)-1}\dd(f^i(z),\xi_{i})\geq\frac{2^{j-1}n_0}{(2^{j}-1)n_0}\eps
\geq\frac{\eps}{2}
$$
for every sufficiently large $j\in\N$ of some fixed parity (odd or even). We conclude that for some $p,q,r,s,t$ with $q<r-1$ and $s<n_{2t+1}$ the sequence
$$
x,f(x),\ldots, f^{p-1}(x),f^{q}(z),\ldots,f^{r-1}(z),f^{s}(y_t),\ldots,y
$$
is a finite $\eps$-pseudo-orbit from $x$ to $y$, which completes the proof.
\end{proof}

\begin{lem}\label{lem:surj-almost-spec-implies-chain-mix}
Let $(X,f)$ be a compact dynamical system.
If $f$ is surjective and has the almost specification property, then $f$ is chain mixing.
\end{lem}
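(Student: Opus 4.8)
The plan is to argue directly, building for each sufficiently small $\eps>0$ and each pair $x,y\in X$ a family of $\eps$-pseudo-orbits from $x$ to $y$ that realizes every sufficiently large length. Since the length threshold will depend only on $\eps$ (through the almost specification data) and not on $x,y$, this yields chain mixing at once, bypassing the route through \cite{RichWise} used in Lemma~\ref{lem:surj-asp-implies-chain-mix}. First I fix $\eps>0$ and let $g,k_g$ be the mistake function and threshold witnessing the almost specification property; I put $k=k_g(\eps)$, enlarged if necessary so that $g(a,\eps)\le a-2$ for all $a\ge k$, which is possible because $g(a,\eps)/a\to 0$. A prescribed target length will be written as $a+b-1$ with $a,b\ge k$.

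The key preparatory step uses surjectivity: since $f$, and hence $f^{b-1}$, is onto, I fix a preimage $w$ with $f^{b-1}(w)=y$, so that a genuine orbit arc of $w$ of length $b$ terminates exactly at $y$. I then apply the almost specification property to the two points $x_1=x$ and $x_2=w$ with lengths $n_1=a$, $n_2=b$, and accuracy $\eps$. This produces a point $z\in X$ together with almost full sets $\Lambda_1\in I(g;a,\eps)$ and $\Lambda_2\in I(g;b,\eps)$ satisfying $\dd(f^i(x),f^i(z))<\eps$ for $i\in\Lambda_1$ and $\dd(f^i(w),f^{a+i}(z))<\eps$ for $i\in\Lambda_2$. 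Because $\Lambda_1$ has at least two elements it contains some index $i_1\ge 1$, and $\Lambda_2$ is nonempty, say $i_2\in\Lambda_2$.

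The orbit of $z$ now serves as a bridge. I splice together three honest orbit arcs: the arc $x,f(x),\dots,f^{i_1-1}(x)$, the arc $f^{i_1}(z),\dots,f^{a+i_2-1}(z)$, and the arc $f^{i_2}(w),\dots,f^{b-1}(w)=y$. Inside each arc the transitions are exact, while the two splices are controlled by almost specification: the step from $f^{i_1-1}(x)$ to $f^{i_1}(z)$ costs $\dd(f^{i_1}(x),f^{i_1}(z))<\eps$ since $i_1\in\Lambda_1$, and the step from $f^{a+i_2-1}(z)$ to $f^{i_2}(w)$ costs $\dd(f^{a+i_2}(z),f^{i_2}(w))<\eps$ since $i_2\in\Lambda_2$. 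The resulting finite sequence begins at $x$, ends at $y$, and is an $\eps$-pseudo-orbit; a direct count shows that its length equals $a+b-1$ irrespective of the splice indices $i_1,i_2$. As $a,b$ range over integers $\ge k$ the value $a+b-1$ covers every integer $\ge 2k-1$, so every length beyond $2k-1$ is realized, and letting $\eps$ shrink finishes the proof.

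The only genuine subtlety — and the sole place the almost specification hypothesis really bites — is that tracing holds merely along the almost full sets $\Lambda_1,\Lambda_2$, with possible gaps at the mistakes, rather than at every coordinate. The point is that one needs no more than a single good splice index inside each of $\Lambda_1$ and $\Lambda_2$, which almost fullness supplies trivially, since everything between splices is an exact orbit segment; the mistakes are therefore harmless. Surjectivity is used exactly once, to produce the preimage $w$ with $f^{b-1}(w)=y$ so that the terminal arc lands precisely on $y$; this parallels the choice of $y_j\in f^{-n_{2j+1}+1}(y)$ in the proof of Lemma~\ref{lem:surj-asp-implies-chain-mix} and is why surjectivity cannot be dropped.
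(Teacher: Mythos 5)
Your proof is correct and follows essentially the same route as the paper's: surjectivity supplies a preimage of $y$, almost specification with $m=2$ produces a bridging point $z$, and three genuine orbit arcs are spliced at indices taken from the almost full sets. The only differences are bookkeeping ones --- the paper fixes both tracing lengths at $N=k_g(\eps)$ and absorbs the variable length into the tail arc of the preimage (and shadows $f(x)$ rather than $x$, which spares it your requirement that $\Lambda_1$ contain a nonzero index), whereas you vary the two tracing lengths $a,b$ directly.
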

\begin{proof}
Let $x,y\in X$ and fix $\eps>0$.  Use the almost specification property to obtain the constant $N=k_{g}(\eps)$. 

We will show that for each $n\geq 2N+2$ there is an $\eps$-chain of length $n$ from $x$ to $y$. To this end we fix $n\geq 2N+2$. Using surjectivity of $f$ we find $y_0\in X$ such that $f^{n-N}(y_0)=y$. By the almost specification property there is a point $z\in X$ such that $z\in B_N(f(x);g,\eps)$ and $f^N(z)\in B_{N}(y_0;g,\eps)$.
Equivalently, there is $0\leq s \leq N-1$ such that
$\dd(f^{s+1}(x),f^s(z))<\eps$ and there is $N\leq t \leq 2N$ such that
$\dd(f^t(z),f^{t-N}(y_0))<\eps$. Therefore the sequence
$$
x,f(x),\ldots, f^{s}(x),f^s(z),f^{s+1}(z),\ldots,f^{t-1}(z),f^{t-N}(y_0),f^{t-N+1}(y_0),\ldots,y
$$
is an $\eps$-pseudo-orbit of length $n$ from $x$ to $y$, which completes the proof.
\end{proof}

\begin{lem}\label{lem:apo-from-aapo}
Let $(X,f)$ be a compact dynamical system.
If $f$ is chain mixing and $\set{x_j}_{j=0}^\infty$ is an asymptotic average pseudo-orbit of $f$,
then there exists an asymptotic pseudo-orbit $\set{y_j}_{j=0}^\infty$ of $f$ such that the set $\set{j : x_j\neq y_j}$ has asymptotic density zero.
\end{lem}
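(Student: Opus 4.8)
The plan is to use Lemma~\ref{Walters} to isolate the ``bad'' indices, keep $y_j=x_j$ away from them, and repair the sequence across the bad parts by splicing in genuine pseudo-orbit segments supplied by chain mixing. Concretely, I would set $a_i=\dd(f(x_i),x_{i+1})$. This is a bounded sequence of non-negative reals (bounded by $\diam X$), and the hypothesis that $\set{x_j}$ is an asymptotic average pseudo-orbit says exactly that $\tfrac1n\sum_{i=0}^{n-1}a_i\to 0$. Hence Lemma~\ref{Walters} gives a set $J\subset\N$ with $d(J)=0$ and $\lim_{n\notin J}a_n=0$: every transition away from $J$ eventually becomes arbitrarily small, and the positions where the transition may stay large form a density-zero set.

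Next I would record a \emph{uniform} form of chain mixing: for every $\delta>0$ there is an integer $L(\delta)$ such that for \emph{all} $x,y\in X$ and all $\ell\ge L(\delta)$ there is a $\delta$-pseudo-orbit from $x$ to $y$ of length $\ell$. This follows from compactness. Pick a finite $\gamma$-net of $X$ with $\gamma$ so small that $\dd(u,v)<\gamma$ forces $\dd(f(u),f(v))<\delta/3$ (uniform continuity of $f$); apply the definition of chain mixing to each of the finitely many pairs of net points with constant $\delta/3$ to obtain a common length bound; then splice arbitrary endpoints $x,y$ onto a net-to-net pseudo-orbit, the approximations at the two ends each costing at most $2\delta/3<\delta$. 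Since chain mixing furnishes net-to-net pseudo-orbits of \emph{every} sufficiently large length, so does the spliced construction.

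With these tools I would construct $\set{y_j}$ as follows. Enlarge $J$ to a set $\tilde J\supseteq J$ which is a disjoint union of finite intervals (the ``bad blocks'') $B_1<B_2<\cdots$, each separated from the next by at least one index, and attach to $B_m$ an error $\delta_m\to 0$ arranged so that $\#B_m\ge L(\delta_m)$. On the complementary ``good blocks'' put $y_j=x_j$. On each bad block $B_m=\set{l_m,\ldots,r_m}$ use uniform chain mixing to choose $y_{l_m},\ldots,y_{r_m}$ so that $x_{l_m-1},y_{l_m},\ldots,y_{r_m},x_{r_m+1}$ is a $\delta_m$-pseudo-orbit from $x_{l_m-1}$ to $x_{r_m+1}$ (its length is $\#B_m+1\ge L(\delta_m)$, so this is possible, and no surjectivity is needed because chain mixing connects arbitrary points). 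Then every transition of $\set{y_j}$ is either internal to a good block, hence equal to some $a_j$ with $j\notin J$ and so tending to $0$, or one of the finitely many transitions of a bridge over $B_m$, hence below $\delta_m$. Since the relevant block index tends to infinity as $j\to\infty$ and $\delta_m\to 0$, this gives $\dd(f(y_j),y_{j+1})\to 0$, so $\set{y_j}$ is an asymptotic pseudo-orbit, and by construction $\set{j:y_j\neq x_j}\subseteq\tilde J$.

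The crux, and the step I expect to be the main obstacle, is to build $\tilde J$ so that \emph{simultaneously} $d(\tilde J)=0$ and each block is long enough to host a small-error bridge. There is a genuine tension: shrinking the bridge error $\delta_m$ forces the block length $L(\delta_m)$ to grow at a rate we do not control, while density zero must be maintained. I would resolve it by choosing checkpoints $0=s_0<s_1<\cdots$ adapted both to $L(\cdot)$ and to the rate at which $\density(J\mid n)/n\to 0$: on the window $[s_{k-1},s_k)$ use error $\delta_k=1/k$, pad each bad index of the window to a block of length roughly $L(1/k)$ (merging overlapping blocks), and take $s_k$ so large that $\density(J\mid n)\le n/\bigl(k\,L(1/k)\bigr)$ for all $n\ge s_k$. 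With this choice the total length of the blocks meeting $[0,n)$ is at most a vanishing fraction of $n$, giving $d(\tilde J)=0$. It is precisely the density-zero conclusion of Lemma~\ref{Walters} that makes room to absorb the fixed per-level bridge lengths, and checking this bookkeeping is the only substantial computation left.
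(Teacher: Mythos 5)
Your proposal follows essentially the same route as the paper: Lemma~\ref{Walters} produces a density-zero set $J$ of bad transitions, compactness upgrades chain mixing to a uniform statement, and one bridges over enlarged bad blocks by genuine $\delta_k$-chains with $\delta_k\to0$ while keeping the enlarged set of density zero. The bridge construction and the verification that $\set{y_j}_{j=0}^\infty$ is an asymptotic pseudo-orbit are correct. The step that fails as written is the density bookkeeping, which you yourself flag as the crux. You place blocks of length roughly $L(1/k)$ on the window $[s_{k-1},s_k)$, but the condition $\density(J\mid n)\le n/(k\,L(1/k))$ is only imposed for $n\ge s_k$, i.e.\ \emph{after} that window closes. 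For $n$ inside $[s_{k-1},s_k)$ the only available bound is $\density(J\mid n)\le n/\bigl((k-1)L(1/(k-1))\bigr)$, so your estimate for $\#(\tilde J\cap[0,n))$ reduces to $L(1/k)\,n/\bigl((k-1)L(1/(k-1))\bigr)$, and the ratio $L(1/k)/L(1/(k-1))$ is not under your control; if $L(1/k)$ is enormous compared to $(k-1)L(1/(k-1))$ and $[s_{k-1},s_k)$ contains its allowed share of bad indices, the blocks can cover a fixed positive fraction of an initial segment. The repair is a one-window shift: use error $1/k$ and block length $L(1/k)$ only on $[s_k,s_{k+1})$, having chosen $s_k$ so that $\density(J\mid n)\le n/(k\,L(1/k))$ already holds for all $n\ge s_k$; then every block meeting $[0,n)$ for $n$ in window $k$ has length at most $L(1/k)$, whence $\#(\tilde J\cap[0,n))\le L(1/k)\,\density(J\mid n)+O(1)\le n/k+O(1)$. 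This is precisely how the paper aligns its checkpoints with its block lengths (it demands $M_k\cdot\density(J\mid n)/n<2^{-k}$ for all $n>n_k$ and only then tiles $[n_k,n_{k+1})$ with blocks of length $M_k$); tiling with blocks of fixed length $M_k$ that fit exactly into $[n_k,n_{k+1})$ also spares the paper your block-merging and window-straddling complications.
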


\begin{proof}
By chain mixing of $(X,f)$, for every $k$ there exists an integer $M_k$ such that for any points $x,y\in X$ there is a $1/2^k$-chain of length $M_k+1$ from $x$ to $y$. Without loss of generality we may assume that $M_{k+1}$ is a multiple of $M_k$ for every $k$.

Let $\set{x_j}_{j=0}^\infty$ be an asymptotic average pseudo-orbit. By Lemma~\ref{Walters} there exists a set $J$ such that
$d(J)=0$ and
\begin{equation}\label{notin}
\lim_{j\not\in J} \dd(f(x_j),x_{j+1})=0.
\end{equation}
Since $d(J)=0$, there exists a strictly increasing sequence $\set{n_k}_{k=1}^\infty$ such that for every $k$ we have $M_{k+1}$ divides $n_k$
and
$$
M_k \cdot \frac{\density(J\mid n)}{n}< \frac{1}{2^k}
$$
for every $n>n_k$. By \eqref{notin} we may assume (increasing $n_k$ if necessary) that if $j\not\in J$ and $j\geq n_k$ then $\dd(f(x_j),x_{j+1})<1/2^k$.
Now we define a set $J'$ and a sequence $\set{y_j}_{j\in J'}$ in the following way: for every $k$ and for every $s$ such that $[sM_k,(s+1){M_{k}})\subset [n_k,n_{k+1})$
if
$J\cap [sM_k,(s+1){M_{k}})\neq\emptyset$
then we include the set $[sM_k,(s+1)M_k]\cap\N$ in $J'$ and we define $\set{y_j}_{j=sM_k}^{(s+1)M_k}$ as a $1/2^k$-chain
from $x_{sM_k}$ to $x_{(s+1)M_k}$ of length $M_{k}+1$. Note that $y_{sM_k}=x_{sM_k}$ and $y_{(s+1)M_k}=x_{(s+1)M_k}$.

Let $J'$ and $\set{y_j}_{j\in J'}$ be obtained by the above procedure. For $j\not\in J'$ we put $y_j=x_j$.
First note that for $j\geq n_k$ we have $\dd(f(y_j),y_{j+1})<1/2^k$ and hence $\set{y_j}_{j=0}^\infty$ is an asymptotic pseudo-orbit. Furthermore, if we fix any $n>n_1$
then there is $k>0$ such that $n_k< n \leq n_{k+1}$ and then
\begin{eqnarray*}
\frac{1}{n}\density(J'\mid n)&=&\frac{1}{n}\density(J'\mid n_1)+\frac{1}{n}\sum_{s=1}^k\density(J'\cap [n_s,n_{s+1})\mid n)\\
&\leq&\frac{1}{n}\density(J'\mid n_1)+\frac{1}{n}\sum_{s=1}^k M_s\density(J \cap [n_s,n_{s+1})\mid n)\\
&\leq&\frac{1}{n}\density(J'\mid n_1)+M_k\cdot\frac{\density(J\mid n)}{n}\\
&<&\frac{1}{n}\density(J'\mid n_1)+\frac{1}{2^k}.
\end{eqnarray*}
This shows that $d(J')=0$. The proof is completed by noting that
$\set{j : x_j\neq y_j}\subset J'$.
\end{proof}

\subsection{Almost specification implies asymptotic average shadowing}

The similarities between almost specification and asymptotic average shadowing are rather vague and it is not obvious whether either one implies the other. In this section we show that almost specification implies asymptotic average shadowing for surjective maps.

\begin{lem}\label{lem:inf_almostspec}
Let $X$ be compact, let $f\colon X \ra X$ be a map with the almost specification property, and let $g$ be its mistake function. Assume we are given
\begin{enumerate}
\item a sequence of positive real numbers $\set{\eps_i}_{i=1}^\infty$,
\item a sequence of points $\set{x_i}_{i=1}^\infty\subset X$,
\item a sequence of integers $\{n_i\}_{i=0}^\infty$ with $n_0=0$ and $n_i \geq k_{g}(\eps_i)$.
\end{enumerate}
Then, setting
$$
l_i=\sum_{s=0}^{i-1}n_s\,\,\text{for }i\in\Zp,
$$
we can find a point $z\in X$ such that for every $j\in\Zp$ we have
$$
f^{l_j}(z)\in \overline{B_{n_j}(g;x_j,\eps_j)}.
$$
\end{lem}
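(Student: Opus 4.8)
The plan is to run a standard compactness argument: apply the almost specification property to longer and longer finite initial segments of the given data, then extract a convergent subsequence of the resulting tracing points and pass to the limit. The closure in the conclusion is exactly what one should expect from such a limiting procedure.

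First I would fix $m\geq 1$ and apply the almost specification property to the finite data $\eps_1,\ldots,\eps_m$, the points $x_1,\ldots,x_m$, and the lengths $n_1,\ldots,n_m$, which satisfy $n_j\geq k_g(\eps_j)$ by hypothesis. Here one must check that the partial sums $l_j=\sum_{s=0}^{j-1}n_s$ defined in the statement coincide with those appearing in the definition of almost specification, both being computed with $n_0=0$; this is immediate. The almost specification property then yields a point $z_m\in X$ such that $f^{l_j}(z_m)\in B_{n_j}(g;x_j,\eps_j)$ for every $j=1,\ldots,m$. Since $X$ is compact, I would pass to a subsequence $\set{z_{m_k}}$ converging to some $z\in X$.

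It remains to verify that $z$ works for every index. Fix $j\in\Zp$ and discard the finitely many terms with $m_k<j$. For all the remaining terms we have $f^{l_j}(z_{m_k})\in B_{n_j}(g;x_j,\eps_j)$, and continuity of $f^{l_j}$ gives $f^{l_j}(z_{m_k})\to f^{l_j}(z)$. Hence $f^{l_j}(z)$ is a limit of points of $B_{n_j}(g;x_j,\eps_j)$, so $f^{l_j}(z)\in\overline{B_{n_j}(g;x_j,\eps_j)}$, as required. There is essentially no obstacle in the argument: the only genuinely forced feature is the appearance of the closure, which arises precisely because convergence places the limit point in the closure of the (open) Bowen balls rather than in the balls themselves. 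The sole point requiring attention is the bookkeeping that aligns the indexing of the blocks $l_j$ between the finite property and the infinite statement, ensuring that a single $z_m$ traces all initial blocks simultaneously.
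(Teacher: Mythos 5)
Your proposal is correct and follows essentially the same route as the paper: apply the finite almost specification property to initial segments of the data, extract a convergent subsequence of the tracing points by compactness, and pass to the limit, with the closure absorbing the boundary effect. The only difference is that the paper additionally diagonalizes over the finite families $I(g;n_j,\eps_j)$ to fix a single mistake set $\Lambda_j$ per block before taking the limit; since $I(g;n_j,\eps_j)$ is finite, the closure of the union $B_{n_j}(g;x_j,\eps_j)$ equals the union of the closures $\overline{B_{\Lambda}(x_j,\eps_j)}$, so your shorter argument yields the same conclusion.
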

\begin{proof}
Since each number $k_{g}(\eps)$ in the definition of the almost specification property depends only on $\eps$, but not on $m$, for each $m\in\Zp$ we can find a point $z_m$
such that for every $j=1,\ldots,m$ we have
$$
f^{l_j}(z_m)\in B_{n_j}(g;x_j,\eps_j).
$$

We will use a diagonalization procedure to obtain a ``good'' subsequence of $\set{z_i}_{i=1}^\infty$, which, by an abuse of notation, we will also denote by $\set{z_i}_{i=1}^\infty$.
Since each $I(g; n_j, \eps_j)$ is finite, passing to a subsequence in $\set{z_i}_{i=1}^\infty$ we can assume that there is $\Lambda_1\in I(g; n_1, \eps_1)$ such that $f^{l_1}(z_j)\in B_{\Lambda_1}(x_1,\eps_1)$ for every $j\geq 1$. Similarly, passing again to a subsequence in $\set{z_i}_{i=2}^\infty$ if necessary (i.e. preserving $\Lambda_1$ and $z_1$), we can find $\Lambda_2\in I(g; n_2, \eps_2)$ such that
$f^{l_1}(z_j)\in B_{\Lambda_1}(x_1,\eps_1)$ for every $j\geq 1$ and $f^{l_2}(z_j)\in B_{\Lambda_2}(x_2,\eps_2)$ for every $j\geq 2$.
Continuing in this way, for each $i\in\Zp$ we can construct sets
$\Lambda_i\in I(g; n_i, \eps_i)$ such that $f^{l_i}(z_j)\in B_{\Lambda_i}(x_i,\eps_i)$ for every $j\geq i$.

Passing one more time to a subsequence we may assume that the limit $z=\lim_{m\to \infty}z_m$ exists.
But then for each $i\in\Zp$, every $j\geq i$, and every $k\in \Lambda_i$ we have $\dd(f^k(x_i), f^k(f^{l_i}(z_j)))<\eps$ and hence
$\dd(f^k(x_i), f^k(f^{l_i}(z)))\leq \eps$. In other words, for $j\in\Zp$ we have
$$
f^{l_j}(z)\in \overline{B_{\Lambda_j}(x_j,\eps_j)}\subset \overline{B_{n_j}(g;x_j,\eps_j)}.
$$
\end{proof}

Roughly speaking, the above lemma says that given an infinite sequence of triples consisting of a point, a large enough length and a precision, we
can find a point (taken from an orbit of some point $z$) tracing an orbit of each given point for a given length with the given precision (with possible mistakes, but their number is bounded above by a mistake function $g$). That point is $f^{l_j}(z)$, and it is tracing the triple $(x_j, m_j,\eps_j)$.

\begin{thm}\label{thm:almost-spec-implies-aasp}
If a surjective compact dynamical system $(X,f)$ has the almost specification property, then it has the asymptotic average shadowing property.
\end{thm}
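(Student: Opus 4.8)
The plan is to reduce the problem to shadowing an ordinary asymptotic pseudo-orbit, and then to build a shadowing point block-by-block using the infinite almost specification Lemma~\ref{lem:inf_almostspec}. First I would normalize so that $\diam(X)\le 1$. Given an asymptotic average pseudo-orbit $\set{x_j}$, I would invoke Lemma~\ref{lem:surj-almost-spec-implies-chain-mix} to see that $f$ is chain mixing, and then apply Lemma~\ref{lem:apo-from-aapo} to produce an asymptotic pseudo-orbit $\set{y_j}$ for which $D=\set{j:x_j\ne y_j}$ has density zero. Since distances are bounded by $1$ and $d(D)=0$, we have $\frac1n\sum_{i<n}\dd(x_i,y_i)\to0$, so any point that asymptotically shadows $\set{y_j}$ in average also asymptotically shadows $\set{x_j}$ in average. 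Thus it suffices to asymptotically shadow in average the genuine asymptotic pseudo-orbit $\set{y_j}$, where $a_i:=\dd(f(y_i),y_{i+1})\to0$.

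Next I would cut $\N$ into consecutive blocks $[l_j,l_{j+1})$ of lengths $n_j$ and feed the data $(y_{l_j},n_j,\eps_j)$ into Lemma~\ref{lem:inf_almostspec}, obtaining a point $z$ with $f^{l_j}(z)\in\overline{B_{n_j}(g;y_{l_j},\eps_j)}$ for every $j$. For $i=l_j+m$ with $0\le m<n_j$ I would estimate
\[
\dd(f^i(z),y_i)\le \dd\bigl(f^m(f^{l_j}(z)),f^m(y_{l_j})\bigr)+\dd\bigl(f^m(y_{l_j}),y_{l_j+m}\bigr).
\]
The first (specification) term is bounded by $\eps_j$ except on a set of at most $g(n_j,\eps_j)$ indices, so its block-average is at most $\eps_j+g(n_j,\eps_j)/n_j$. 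The second (drift) term measures the deviation of the pseudo-orbit from the true orbit of $y_{l_j}$; writing $\omega$ for a modulus of uniform continuity of $f$ it obeys the recursion $b_{m+1}\le\omega(b_m)+a_{l_j+m}$ with $b_0=0$, so for a \emph{fixed} block length it tends to $0$ uniformly as $\max_i a_i\to0$.

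The delicate point is to choose the parameters so that both block-averages are simultaneously small, and this is the main obstacle. I would proceed in stages: in stage $k$ set $\eps_k=2^{-k}$, pick a single block length $n^{(k)}\ge k_g(\eps_k)$ large enough that $g(n^{(k)},\eps_k)/n^{(k)}<2^{-k}$, and only then pick $\delta_k>0$ small enough that the drift over a block of length $n^{(k)}$ stays below $2^{-k}$ whenever the underlying errors are $\le\delta_k$; finally start stage $k$ at an index $N_k$ so large that $a_i\le\delta_k$ for $i\ge N_k$. With these choices every block in stage $k$ has block-average at most $C2^{-k}$ for an absolute constant $C$. Making each stage consist of many blocks (so that $N_k$ is negligible compared with the length already accumulated and the final partial block is a vanishing fraction) forces the running Ces\`aro average $\frac1n\sum_{i<n}\dd(f^i(z),y_i)$ down to its current block-average level, hence to $0$. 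The heart of the argument is thus balancing the two competing requirements --- the specification term wants long blocks, while controlling the drift wants the block length fixed and the block to begin late --- which the staged construction reconciles.
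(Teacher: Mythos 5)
Your proposal is correct and follows essentially the same route as the paper: reduce to a genuine asymptotic pseudo-orbit via Lemmas~\ref{lem:surj-almost-spec-implies-chain-mix} and \ref{lem:apo-from-aapo}, then apply Lemma~\ref{lem:inf_almostspec} to a staged sequence of block initial points, with block length $n_k$ chosen so that $g(n_k,2^{-k})/n_k<2^{-k}$, a $\delta_k$ controlling the drift over a block of that fixed length, and stages long enough that the startup index and the final partial block are asymptotically negligible. The only cosmetic difference is that the paper finishes by showing the non-mistake set has density one and invoking Lemma~\ref{Walters}, whereas you estimate the Ces\`aro average directly; these are equivalent.
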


\begin{proof} Let $\set{x_j}_{j=0}^\infty$ be an asymptotic average pseudo-orbit of $f$. By Lemma \ref{lem:surj-almost-spec-implies-chain-mix} $f$ is chain mixing, and therefore we may use Lemma \ref{lem:apo-from-aapo} to obtain an asymptotic pseudo-orbit of $f$, denoted $\set{y_j}_{j=0}^\infty$,  such that $d(\set{j : x_j\neq y_j})=0$. It is enough to show that
$\set{y_j}_{j=0}^\infty$ can be asymptotically shadowed on average by some point $z\in X$.

Let $g$ be a mistake function for $f$. For each $k\geq 1$ we take an integer $n_k$ such that $n_k>k_g(1/2^k)$ and
$n_k> 2^k g(n_k,1/2^k)$. We may assume $n_k< n_{k+1}$. Using compactness of $X$ and continuity of $f$, for each $k$ we can also find a constant $\delta_k>0$ such that every $\delta_k$-chain of length $n_k$ is $1/2^k$-shadowed by
its first point. Since $\set{y_j}_{j=0}^\infty$ is an asymptotic pseudo-orbit we can find a sequence $\set{m_k}_{k=1}^\infty$ such that for each $k\in\Zp$ the sequence $\set{y_j}_{j=m_k}^\infty$ is a $\delta_k$-pseudo-orbit. Clearly we can choose $\set{m_k}_{k=1}^\infty$ so that for each $k\in\Zp$ we will have in addition:
\begin{enumerate}
\item $m_{k+1}>4^km_k$,
\item $n_k$ divides $(m_{k+1}-m_k)$, equivalently, there is $s_k>0$ is such that $m_{k+1}-m_k=s_k n_k$.
\item $4^k n_{k+1}<m_{k+1}$.
\end{enumerate}
We call any point $y_{m_k + s n_k}$, where $k\in\Zp$ and $0\leq s<s_k$ an \emph{initial point of order $k$}. Note that by our choice of parameters, for every $k\in\Zp$ and every $0\leq s<s_k$ a sequence $\set{y_j}_{j=m_k+sn_k}^{m_k+(s+1)n_{k}-1}$ is $1/2^k$-shadowed by its first (initial) point $y_{m_k + s n_k}$.
Set $\eps_0=1$, $k_0=m_1$ and let $\set{\eps_j}_{j=1}^\infty$ ($\set{k_j}_{j=1}^\infty$, respectively) be a non-increasing (non-decreasing, respectively) sequence such that $1/2^k$, ($n_k$, respectively) repeats $s_k$ times as an element of the sequence. We may plug this data (sequence of all initial points and sequences $\eps=\set{\eps_j}_{j=0}^\infty$, $k=\set{k_j}_{j=0}^\infty$) into  Lemma~\ref{lem:inf_almostspec} to obtain a point $z\in X$ such that for any $k\in\Zp$ and every $0\leq s<s_k$ we have $f^{m_k+sn_k}(z)\in \overline{B_{n_k}(g;y_{m_k + s n_k},1/2^k)}$.
The corresponding set of mistakes is defined as the set $\set{m_k+sn_k\le j<m_k+(s+1)n_k:\dd(f^{j}(z),f^{j-m_k-sn_k})\ge 1/2^k}$.
\begin{figure}
\begin{align*}
{\eps}&=(1,\underbrace{1/2,\ldots,1/2}_{\text{$s_1$ times}},\underbrace{1/2^2,\ldots,1/2^2}_{\text{$s_2$ times}},\ldots,\underbrace{1/2^k,\ldots,1/2^k}_{\text{$s_k$ times}},\ldots),\\
{x}&= (y_0,\underbrace{y_{m_1},y_{m_1+n_1},y_{m_1+2n_1},\ldots,  y_{m_2-n_1}}_{\text{$s_1$ initial points of order $1$}},\ldots,\underbrace{y_{m_k},y_{m_k+n_k},y_{m_k+2n_k},\ldots,  y_{m_k-n_k}}_{\text{$s_k$ initial points of order $k$}},\ldots),\\
{k}&=(m_1,\underbrace{n_1,\ldots,n_1}_{\text{$s_1$ times}},\underbrace{n_2,\ldots,n_2}_{\text{$s_2$ times}},\ldots,\underbrace{n_k,\ldots,n_k}_{\text{$s_k$ times}},\ldots).
\end{align*}
\caption{Sequences $\eps$, ${x}$, $k$ to which Lemma~\ref{lem:inf_almostspec} is applied in the proof of Theorem \ref{thm:almost-spec-implies-aasp}.}
\end{figure}

Let $J\subset \N$ be the complement of the union of all sets of mistakes. Note that if $j\in J\cap [m_k,m_{k+1})$ then $\dd(f^j(z),y_j)<1/2^k+1/2^k$, because
$y_j$ is $1/2^k$ traced by the orbit of the closest preceding initial point and $j\in J$ means that the orbit of $z$ traces without a mistake at $j$.
Therefore we have $\lim_{j\in J}d(f^j(z),y_j)=0$.
But by the choice of $z$ we also have that
\begin{eqnarray*}
\frac{1}{m_{k+1}}\# (J \cap [m_k,m_{k+1}))
&\geq & \frac{1}{m_{k+1}} \cdot \frac{(m_{k+1}-m_k)}{n_k} (n_k - g(n_k,1/2^k))\\
&= & (1-m_k/m_{k+1}) ( 1- g(n_k,1/2^k)/n_k )\\
&\geq & (1-1/4^k)(1-1/2^k).
\end{eqnarray*}
Next, if we fix $n\in [m_{k+1}, m_{k+2})$, where $n=m_{k+1}+sn_{k+1}+r$, $0\leq r<n_{k+1}$, then we also have
\begin{eqnarray*}
\frac{1}{n}\density(J\mid n)&\geq&\frac{1}{m_{k+1}}\cdot\frac{m_{k+1}}{n}\# (J \cap [m_k,m_{k+1}))+ \frac{1}{n}\# (J \cap [m_{k+1},n))\\
&\geq & (1-1/4^k)(1-1/2^k) \frac{m_{k+1}}{n}+ \frac{s (n_{k+1}-g(n_{k+1},1/2^{k+1}))}{n}\\
&\geq & (1-1/4^k)(1-1/2^k) \frac{m_{k+1}}{n} + \frac{(1-1/2^{k+1})s n_{k+1}}{n}\\
&\geq & (1-1/4^k)(1-1/2^k) \frac{m_{k+1}}{n} + \frac{(1-1/4^k)(1-1/2^k)s n_{k+1}}{n}\\
&\geq & (1-1/4^k)(1-1/2^k)(1 - n_{k+1}/m_{k+1})\\
&\geq& (1-1/4^k)(1-1/2^k)(1-1/4^k) \xrightarrow[k\ra\infty]{} 1.
\end{eqnarray*}
We conclude that $d(J)=1$, which by Lemma \ref{Walters} ends the proof.
\end{proof}

\subsection{Asymptotic average shadowing implies average shadowing}

In this section we show that the asymptotic average shadowing imply average shadowing.
Next theorem shows that, under the assumption of chain mixing, the average shadowing property is all about
average shadowing of pseudo-orbits.
\begin{thm}\label{thm:normal_orb_asp}
Assume that a compact dynamical system $(X,f)$ is chain mixing. Then the following conditions are equivalent:
\begin{enumerate}
\item\label{thm:normal_orb_asp:1} $f$ has the average shadowing property,
\item\label{thm:normal_orb_asp:2} for every $\eps>0$ there is $\delta>0$ such that every $\delta$-pseudo-orbit $\set{x_i}_{i=0}^\infty$
is $\eps$-shadowed in average by some point $x\in X$.
\end{enumerate}
\end{thm}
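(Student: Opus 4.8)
The equivalence splits into one routine and one substantial implication. For \eqref{thm:normal_orb_asp:1} $\Rightarrow$ \eqref{thm:normal_orb_asp:2} I would argue directly from the definitions: every $\delta$-pseudo-orbit $\set{x_i}$ is automatically a $\delta$-average-pseudo-orbit, since $\dd(f(x_i),x_{i+1})<\delta$ for each $i$ makes every window average $\frac1n\sum_{i=0}^{n-1}\dd(f(x_{i+k}),x_{i+k+1})$ smaller than $\delta$ as well (take $N=1$). Hence the $\delta$ that the average shadowing property supplies for a given $\eps$ already witnesses \eqref{thm:normal_orb_asp:2}, because ``$\eps$-shadowed in average'' is exactly the conclusion of \eqref{thm:normal_orb_asp:2}. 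This implication uses neither chain mixing nor compactness.

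The real content is \eqref{thm:normal_orb_asp:2} $\Rightarrow$ \eqref{thm:normal_orb_asp:1}, and the plan is to convert an arbitrary $\delta$-average-pseudo-orbit into a genuine pseudo-orbit, altering only a thin set of coordinates, and then invoke \eqref{thm:normal_orb_asp:2}. The order of the quantifiers is crucial. First I would fix $\eps>0$, assume $\diam(X)\le 1$, and apply \eqref{thm:normal_orb_asp:2} with $\eps/2$ to obtain $\delta_0>0$ such that every $\delta_0$-pseudo-orbit is $(\eps/2)$-shadowed in average. Next, using chain mixing together with compactness (the same uniformization used inside the proof of Lemma~\ref{lem:apo-from-aapo}), I would fix an integer $M$ so that any two points of $X$ are joined by a $\delta_0$-chain with $M+1$ terms. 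Only \emph{after} $M$ is pinned down would I choose the tolerance $\delta>0$, small enough that both $\sqrt\delta<\delta_0$ and $M\sqrt\delta<\eps/2$.

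Now take any $\delta$-average-pseudo-orbit $\set{x_i}$ and repair it. I would call an index $i$ \emph{bad} if $\dd(f(x_i),x_{i+1})\ge\sqrt\delta$, and let $B$ be the set of bad indices; the defining inequality of a $\delta$-average-pseudo-orbit forces $\density(B\mid W)<\sqrt\delta\,\density(W)$ for every sufficiently long window $W$, so $d^*(B)\le\sqrt\delta$. Partition $\N$ into consecutive blocks of length $M$ and call a block \emph{bad} if it meets $B$. On good blocks I keep $\set{x_i}$ unchanged (all internal gaps there are $<\sqrt\delta<\delta_0$), and on each bad block I replace the interior by a $\delta_0$-chain of the appropriate length joining the retained left endpoint $x_{sM}$ to the retained right endpoint $x_{(s+1)M}$. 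Because endpoints are preserved and every block then has all internal gaps below $\delta_0$, the resulting sequence $\set{y_i}$ is a genuine $\delta_0$-pseudo-orbit, and the altered set $J'=\set{i:y_i\neq x_i}$ lies in the union of the bad blocks, whence $d^*(J')\le M\,d^*(B)\le M\sqrt\delta<\eps/2$.

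To finish I would apply \eqref{thm:normal_orb_asp:2} to $\set{y_i}$, obtaining $z\in X$ with $\limsup_n\frac1n\sum_{i=0}^{n-1}\dd(f^i(z),y_i)<\eps/2$, and then estimate $\frac1n\sum_{i=0}^{n-1}\dd(f^i(z),x_i)\le\frac1n\sum_{i=0}^{n-1}\dd(f^i(z),y_i)+\frac1n\density(J'\mid n)$, using $\dd(x_i,y_i)\le 1$ on $J'$ and $\dd(x_i,y_i)=0$ off $J'$; passing to the limsup yields a bound below $\eps/2+\eps/2=\eps$, so $z$ shadows $\set{x_i}$ in average with error less than $\eps$. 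The main obstacle I anticipate is the bookkeeping in the repair step: one must check that preserving the block endpoints really makes $\set{y_i}$ a bona fide $\delta_0$-pseudo-orbit across every block boundary, and, more delicately, that the density of altered coordinates can be forced below $\eps/2$. This succeeds precisely because $M$ — the chain length, hence both the block size and the ``infection radius'' of a single bad index — depends on $\eps$ alone, so that $\delta$ may be chosen afterwards to make $M\sqrt\delta$ as small as needed.
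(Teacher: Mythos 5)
Your proposal is correct and follows essentially the same strategy as the paper: the easy direction is the observation that a $\delta$-pseudo-orbit is a $\delta$-average-pseudo-orbit, and for the converse you repair a $\delta$-average-pseudo-orbit into a genuine $\delta_0$-pseudo-orbit by splicing in chains of a fixed length $M$ (obtained from chain mixing plus compactness) around the sparse set of large jumps, so that the altered coordinates have upper density at most $M$ times the density of bad indices, and then apply condition \eqref{thm:normal_orb_asp:2}. The paper organizes the surgery in blocks of length $N$ coming from the average-pseudo-orbit definition and uses a threshold $\gamma$ with $\delta=\eps\gamma/(3M\diam(X))$ where you use $\sqrt{\delta}$ and fixed $M$-blocks, but these are only bookkeeping differences.
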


\begin{proof}
Since every $\delta$-pseudo-orbit is also a $\delta$-average pseudo-orbit we only need to prove $\eqref{thm:normal_orb_asp:2}\Rightarrow\eqref{thm:normal_orb_asp:1}$.

Fix any $\eps>0$ and let $\gamma$ be provided to $\eps/5$ such that every $\gamma$-pseudo-orbit
is $\eps/5$-shadowed in average by some point in $X$. By chain mixing and compactness of $X$ there is $M$ such that for every
$x,y\in X$ there is a $\gamma$-pseudo-orbit of length $M$ from $x$ to $y$.

Put $\delta=\eps \gamma/(3M\diam(X))$, fix any $\delta$-average-pseudo-orbit $\set{x_i}_{i=0}^\infty$, and let $N$ be the constant for it
from the definition of a $\delta$-average-pseudo-orbit. Without loss of generality we may assume that $M<N$.

We will now construct a sequence $\set{y_i}_{i=0}^\infty$ which is a genuine pseudo-orbit and shadows $\set{x_i}_{i=0}^\infty$ on average.
Starting with $j=0$ and going up through all the natural numbers we initially put (some of these blocks will be modified later):
$$
y_{jN},y_{jN+1},\ldots,y_{jN+N}=x_{jN},x_{jN+1},\ldots,x_{jN+N}.
$$
Next, if there is $jN\leq i<jN+N$ such that $\dd(f(y_i),y_{i+1})\geq \gamma$, we choose $k$ so that $jN\leq k\leq i< k+M-1\leq jN+N$ and replace $y_{k},y_{k+1},\ldots,y_{k+M-1}$ with any $\gamma$-pseudo-orbit of length $M$ from $y_{k}$ to $y_{k+M-1}$. We repeat these replacements until $y_{jN},y_{jN+1},\ldots,y_{jN+N}$ becomes a $\gamma$-pseudo-orbit. Note that no replacement changes $y_{jN}$ or $y_{jN+N}$ and therefore the resulting infinite sequence $\set{y_i}_{i=0}^\infty$ is a $\gamma$-pseudo-orbit.

Observe that for all $j\in\N$ we have
$$
M\cdot\#\set{i \in [jN,jN+N) \colon\dd(f(x_i),x_{i+1})\geq \gamma}\geq\#\set{i \in [jN,jN+N) \colon x_i\neq y_i)},
$$
and therefore
$$
\delta M >\frac{M}{N}\sum_{i=0}^{N-1}\dd(f(x_{jN+i}),x_{jN+i+1})\geq\frac{M}{N}\gamma\cdot\#\set{i \in [jN,jN+N) \colon \dd(f(x_i),x_{i+1})\geq \gamma}.
$$
It follows that
\begin{align*}
\frac{\diam(X)}{N}\cdot\#\set{i \in [jN,jN+N)\colon x_i\neq y_i}&\leq \frac{\delta M\diam (X)}{\gamma}.
\end{align*}
But by the definition of $\delta$ we have
\[
\frac{\delta M\diam (X)}{\gamma} = \frac{M\diam(X)}{\gamma}\cdot\frac{\eps\gamma}{3M\diam (X)}\leq  \frac{\eps}{3}.
\]

We are now ready for the final calculation. Let $z$ be a point that $\frac{\eps}{5}$-shadows in average the $\gamma$-pseudo-orbit $\set{y_i}_{i=0}^\infty$.
There is $K\in\N$ such that for every $n>K$ we have
$$
\frac{1}{n}\sum_{i=0}^n d(f^i(z),y_i)<\eps/4.
$$
We may also assume that $N/K<\frac{\eps}{4}$. Fix any $n>K$ and let $s,l\geq 0$ be such that $n=sN+l$, where $l<N$.
Let $A=\set{i<n : x_i= y_i}$ and $B=\set{i<n : x_i\neq y_i}$.
Then
\begin{eqnarray*}
\frac{1}{n}\sum_{i=0}^n d(f^i(z),x_i)&= & \frac{1}{n}\sum_{i\in A} d(f^i(z),y_i)+\frac{1}{n}\sum_{i\in B} d(f^i(z),x_i)\\
&\leq &\frac{1}{n}\sum_{i=0}^n d(f^i(z),y_i)+\frac{l}{n}+ \frac{1}{n}\sum_{j=0}^{s-1}\sum_{i\in B \cap [Nj,Nj+N)}\diam (X)\\
&\leq &\frac{\eps}{4}+\frac{\eps}{4}+\frac{N}{n}\sum_{j=0}^{s-1} \frac{\diam X}{N}\cdot \#(B \cap [Nj,Nj+N))\\
&\leq & \frac{\eps}{2}+\frac{sN}{n}\frac{\eps}{3} \leq \frac{5\eps}{6}.
\end{eqnarray*}
This implies that $\limsup_{n\to \infty }\frac{1}{n}\sum_{i=0}^n d(f^i(z),x_i)\leq \frac{5\eps}{6} <\eps$ and the proof is completed.
\end{proof}

\begin{thm}\label{thm:aasp-implies-asp}
If $f\colon X\to X$ is a surjection and the compact dynamical system $(X,f)$ has the asymptotic average shadowing property, then it also has the average shadowing property.
\end{thm}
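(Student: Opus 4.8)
The plan is to first upgrade the hypothesis to chain mixing and then invoke Theorem~\ref{thm:normal_orb_asp}, which reduces the average shadowing property to shadowing in average of \emph{genuine} $\delta$-pseudo-orbits; this latter statement I would prove by a contradiction argument that assembles a single asymptotic average pseudo-orbit out of many badly shadowed pseudo-orbits. Note that I cannot simply quote Lemma~\ref{lem:surj-asp-implies-chain-mix} for chain mixing, since it already assumes the average shadowing property that is the goal here.

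To obtain chain mixing independently I would revisit the proof of Lemma~\ref{lem:surj-asp-implies-chain-mix}. The sequence $\xi$ constructed there consists of blocks of lengths $n_i=2^i n_0$, and the only indices $j$ with $\dd(f(\xi_j),\xi_{j+1})>0$ are the block junctions; up to time $l(j)\approx 2^j n_0$ there are only about $j$ junctions, each contributing at most $\diam(X)$, so $\frac1n\sum_{i=0}^{n-1}\dd(f(\xi_i),\xi_{i+1})\ra 0$. Hence $\xi$ is in fact an \emph{asymptotic} average pseudo-orbit. Feeding it to the asymptotic average shadowing property and repeating the extraction of Lemma~\ref{lem:surj-asp-implies-chain-mix} yields a finite $\eps$-pseudo-orbit from $x$ to $y$, so $f$ is chain transitive; chain mixing then follows by applying the same reasoning to the powers $f^n$ (the asymptotic average shadowing property passes to $f^n$ by an argument parallel to the one for the average shadowing property) together with \cite{RichWise}.

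With chain mixing in hand, Theorem~\ref{thm:normal_orb_asp} lets me aim at the weaker-looking statement that for every $\eps>0$ there is $\delta>0$ such that every $\delta$-pseudo-orbit is $\eps$-shadowed in average. Suppose this fails. Then there is $\eps>0$ and, for each $k\geq1$, a $2^{-k}$-pseudo-orbit $\xi^{(k)}=\set{x^{(k)}_i}_{i=0}^\infty$ such that every $w\in X$ satisfies $\limsup_{n\ra\infty}\frac1n\sum_{i=0}^{n-1}\dd(f^i(w),x^{(k)}_i)\geq\eps$. The decisive step is to turn this per-point limsup into uniform finite-horizon data: using compactness of $X$ and continuity of $w\mapsto\frac1n\sum_{i=0}^{n-1}\dd(f^i(w),x^{(k)}_i)$ for fixed $n$, I would show that for every $M$ there is $N>M$ so that each $w\in X$ admits some $n$ with $M<n\leq N$ and $\frac1n\sum_{i=0}^{n-1}\dd(f^i(w),x^{(k)}_i)>\eps/2$. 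Indeed, were this false for some $M$, a convergent subsequence of the hypothetical witnesses $w_N$ would converge to a point whose averages stay $\leq\eps/2$ on every horizon beyond $M$, contradicting the displayed limsup bound.

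Finally I would splice the pieces together. Put $t_1=0$, choose the window bounds with $M_k\geq t_k$, let block $k$ of the concatenation $c=\set{c_i}$ be the first $N_k$ terms of $\xi^{(k)}$, and set $t_{k+1}=t_k+N_k$; since $N_k>M_k\geq t_k$, the starting indices $t_k$ grow geometrically. Internal errors in block $k$ are below $2^{-k}$ and each block contributes a single junction error of at most $\diam(X)$, so a routine Cesàro estimate (geometric growth of $t_k$ kills the junction count, while the weighted average of $2^{-k}$ tends to $0$) shows that $c$ is an asymptotic average pseudo-orbit. On the other hand, for any candidate $z$ the uniform window statement applied to $w=f^{t_k}(z)$ gives some $n\in(M_k,N_k]$ with $\frac1n\sum_{i=0}^{n-1}\dd(f^{t_k+i}(z),c_{t_k+i})>\eps/2$; dropping the nonnegative terms from $[0,t_k)$ and using $n>M_k\geq t_k$ yields $\frac1{t_k+n}\sum_{j=0}^{t_k+n-1}\dd(f^j(z),c_j)>\eps/4$. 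As $t_k\ra\infty$ these horizons diverge, so $\limsup_m\frac1m\sum_{j=0}^{m-1}\dd(f^j(z),c_j)\geq\eps/4$ for every $z$, i.e.\ no point asymptotically shadows $c$ in average, contradicting the asymptotic average shadowing property. I expect the uniform finite-horizon extraction of the third paragraph to be the main obstacle, as it is exactly the step converting an asymptotic, point-by-point failure of shadowing into uniform finite data one can concatenate; coordinating the bookkeeping ($M_k\geq t_k$ and geometric growth of $t_k$) so that $c$ is simultaneously a legitimate asymptotic average pseudo-orbit and un-shadowable is the secondary delicate point.
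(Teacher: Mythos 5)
Your proposal is correct and follows essentially the same route as the paper: establish chain mixing, reduce via Theorem~\ref{thm:normal_orb_asp} to shadowing in average of genuine pseudo-orbits, and derive a contradiction by concatenating badly shadowed pseudo-orbits into a single asymptotic (average) pseudo-orbit handed to the asymptotic average shadowing property. The only executional differences are that the paper simply cites \cite[Theorem~3.1]{KuOp2} for chain mixing and splices the blocks with connecting chains (so as to produce a genuine asymptotic pseudo-orbit), whereas you concatenate directly, which suffices since an asymptotic average pseudo-orbit is all that is needed; moreover your windowed finite-horizon extraction is a more careful rendering of the paper's terse compactness step, which asserts a single horizon $k(n)$ valid for all points simultaneously.
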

\begin{proof}
By \cite[Theorem~3.1]{KuOp2} every surjective dynamical system with the asymptotic average shadowing property is chain mixing.
Assume on the contrary that $(X,f)$ does not have the average shadowing property. Then by Theorem~\ref{thm:normal_orb_asp},
there is an $\eps>0$ such that for every $n\in\Zp$ there is a $\frac{1}{n}$-pseudo-orbit $\{\alpha^{(n)}_j\}_{j=0}^\infty$ which is not
$\eps$-shadowed in average by any point in $X$. Then by compactness of $X$ for every $n\in\Zp$ there exist $k(n)\in\Zp$ and an initial block $\gamma^{(n)}=\set{\gamma^{(n)}_i}_{i=0}^{k(n)-1}=\set{\alpha^{(n)}_0,\alpha^{(n)}_1,\ldots, \alpha^{(n)}_{k(n)-1}}$
of $\alpha^{(n)}$ such that for every $z\in X$ we have $\frac{1}{k(n)}\sum_{i=0}^{k(n)-1} \dd(f^i(z),\alpha^{(n)}_i)\geq \eps$. Note that $\lim_{n\to \infty}k(n)=\infty$, as otherwise by the continuity of $f$ for a sufficiently large $n$ the pseudo-orbit $\gamma^{(n)}$ would be $\eps$-shadowed by its initial point $\alpha^{(n)}_0$.

By compactness of $X$ and chain mixing of $f$, for every $n\in\Zp$ there is $m(n)\in\N$ such that any two points can be connected by a $\frac{1}{n}$-pseudo-orbit of length $m(n)$. For $a<b$ any $\frac{1}{b}$-pseudo-orbit is also $\frac{1}{a}$-pseudo-orbit and the sequence $m(n)$ is fixed already, so, without loss of generality, going to a subsequence in $\{k(n)\}_{n=1}^\infty$ if necessary, we may assume that for every $n\in\Zp$ we have $k(n+1)> 2 \sum_{i=1}^n (k(i)+m(i))$. For every $n\in\Zp$ denote by $\eta^{(n)}$ a $\frac{1}{n}$-pseudo-orbit of length $m(n)-2$ such that
$\gamma^{(n)}_{k(n)-1},\eta^{(n)},\gamma^{(n+1)}_{1}$ is also $\frac{1}{n}$-pseudo-orbit (such $\eta^{(n)}$ exists by the choice of $m(n)$).

Denote by $\zeta$ the following concatenation of pseudo-orbits
$$
\zeta=\set{\zeta_i}_{i=0}^\infty=\gamma^{(1)}\eta^{(1)}\gamma^{(2)}\eta^{(2)}\ldots \gamma^{(n)}\eta^{(n)}\ldots
$$
and observe that $\zeta$ is an asymptotic pseudo-orbit. Let $z\in X$ be a point provided by the asymptotic average shadowing property that asymptotically shadows $\zeta$ in average.
Denote by $r(n)$ the number of elements in the pseudo-orbit $\gamma^{(1)}\eta^{(1)}\ldots\eta^{(n-1)}\gamma^{(n)}$.
There exists $N\in\Zp$ such that
$$
\frac{1}{r(N)}\sum_{i=0}^{r(N)-1}\dd(f^i(z),\zeta_i)<\eps/2.
$$
Note that
$$r(N)\leq k(N)+\sum_{i=1}^{N-1} (k(i)+m(i))\leq (1+\frac{1}{2})k(N)\leq 2k(N),$$
and so
\begin{eqnarray*}
\frac{1}{k(N)}\sum_{i=0}^{k(N)-1}\dd(f^{r(N)-k(N)+i}(z),\gamma^{(N)}_i)&=&\frac{1}{k(N)}\sum_{i=r(N)-k(N)}^{r(N)-1} \dd(f^i(z),\zeta_i)\\
&\leq& 2 \frac{1}{r(N)}\sum_{i=r(N)-k(N)}^{r(N)-1}\dd(f^i(z),\zeta_i) < \eps,
\end{eqnarray*}
which contradicts the choice of ${\gamma}^{(N)}$, and completes the proof.
\end{proof}

\subsection{Under shadowing}

Combining our new results with previous research we obtain the following:

\begin{thm}\label{thm:summary}
Let $X$ be a compact metric space. If $f\colon X\ra X$ is a continuous map with the shadowing property, then the following conditions are equivalent:
\begin{enumerate}
\item\label{summary:tt} $f$ is totally transitive,
\item\label{summary:wm} $f$ is topologically weakly mixing,
\item\label{summary:mix} $f$ is topologically mixing,
\item\label{summary:spec} $f$ is surjective and has the specification property,
\item\label{summary:almost-spec} $f$ is surjective and has the almost specification property,
\item\label{summary:aasp} $f$ is surjective and has the asymptotic average shadowing property.
\item\label{summary:asp} $f$ is surjective and has the average shadowing property,
\end{enumerate}
Moreover, if $f$ is c-expansive, then any of the above conditions is equivalent to the periodic specification property of $f$.
\end{thm}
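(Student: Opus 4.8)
The plan is to establish all equivalences through the single cycle $(3)\Rightarrow(2)\Rightarrow(1)\Rightarrow(4)\Rightarrow(5)\Rightarrow(6)\Rightarrow(7)\Rightarrow(3)$ and to treat the $c$-expansive addendum at the end. Several of the arrows are soft and use nothing beyond standard topological dynamics: $(3)\Rightarrow(2)$ and $(2)\Rightarrow(1)$ are the textbook implications that mixing implies weak mixing, which implies total transitivity. The block $(4)\Rightarrow(5)\Rightarrow(6)\Rightarrow(7)$ is a concatenation of facts already at our disposal: specification implies almost specification (and surjectivity is retained), then Theorem~\ref{thm:almost-spec-implies-aasp} turns surjective almost specification into the asymptotic average shadowing property, and Theorem~\ref{thm:aasp-implies-asp} carries this to the average shadowing property. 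All the real work therefore sits in the two arrows that genuinely consume the standing shadowing hypothesis, namely $(7)\Rightarrow(3)$ and $(1)\Rightarrow(4)$.

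For $(7)\Rightarrow(3)$ I would first invoke Lemma~\ref{lem:surj-asp-implies-chain-mix} to get that a surjective map with the average shadowing property is chain mixing, and then promote chain mixing to mixing with the help of shadowing. Concretely, given nonempty open sets $U,V$, choose $x\in U$, $y\in V$ and $\eps>0$ with $B(x,\eps)\subset U$ and $B(y,\eps)\subset V$, and let $\delta$ be the shadowing constant attached to $\eps$. Chain mixing produces an $N$ such that for every $n\ge N$ there is a $\delta$-pseudo-orbit from $x$ to $y$ of length $n$; shadowing each of them yields, for every $n\ge N$, a genuine orbit starting in $U$ and landing in $V$ at time $n$, whence $f^n(U)\cap V\neq\emptyset$ for all $n\ge N$ and $f$ is mixing.

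The arrow $(1)\Rightarrow(4)$ is where shadowing is used most heavily and where I expect the main obstacle. First I would convert total transitivity into mixing. Since shadowing is inherited by every power $f^n$, transitivity of $f^n$ is equivalent to chain transitivity of $f^n$ (transitivity always forces chain transitivity, and a shadowed chain realizes honest transitivity exactly as in the previous paragraph); hence total transitivity says precisely that every power $f^n$ is chain transitive. By the cyclic decomposition of a chain transitive system into chain-mixing pieces cyclically permuted by $f$ with some period $d$ (see \cite{RichWise}), chain transitivity of $f^d$ forces $d=1$, so $f$ is chain mixing, and chain mixing plus shadowing gives mixing as above. Mixing in turn forces surjectivity, for $f(X)$ is closed and, were it proper, mixing applied to $U=X$ and $V=X\setminus f(X)$ would be contradicted. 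The final and most technical point is the classical fact that a surjective mixing system with shadowing enjoys the specification property: for a prescribed precision $\eps$ one takes the shadowing constant $\delta$, uses compactness and mixing to fix a transition length past which $\delta$-pseudo-orbits of any admissible length join any two points, threads the given orbit segments together through such bridging chains into a single $\delta$-pseudo-orbit, and shadows it. I would quote this construction from the previous research cited in the Introduction rather than redo it here.

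For the $c$-expansive addendum, periodic specification trivially implies specification and hence condition $(4)$. For the converse I would start from $(4)$, close the specification pseudo-orbit constructed above into a periodic $\delta$-pseudo-orbit, and then apply the classical principle that an expansive map with shadowing shadows every periodic pseudo-orbit by a genuine periodic point. This upgrades specification to periodic specification and completes the equivalence. The principal difficulty throughout is the mixing-plus-shadowing construction of specification, together with its periodic refinement under expansiveness; everything else is either standard or already proved in the preceding sections.
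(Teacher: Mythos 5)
Your proposal is correct and follows essentially the same route as the paper: the substantive arrows $(5)\Rightarrow(6)\Rightarrow(7)$ come from Theorems \ref{thm:almost-spec-implies-aasp} and \ref{thm:aasp-implies-asp}, $(4)\Rightarrow(5)$ from Pfister--Sullivan, and the loop is closed using the average shadowing property of powers together with shadowing. The only cosmetic differences are that the paper closes the cycle via $(7)\Rightarrow(1)$ rather than $(7)\Rightarrow(3)$, and it simply cites \cite{KwOp} for the whole block $(1)$--$(4)$ and the c-expansive addendum, whereas you sketch those classical arguments (chain mixing plus shadowing gives mixing, mixing plus shadowing gives specification, periodic pseudo-orbits plus expansiveness give periodic specification) before deferring to the literature.
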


\begin{proof}Implications $\eqref{summary:almost-spec} \Longrightarrow \eqref{summary:aasp}\Longrightarrow\eqref{summary:asp}$ were proved in Theorems~\ref{thm:almost-spec-implies-aasp} and \ref{thm:aasp-implies-asp}, respectively.
To see $\eqref{summary:asp}\Longrightarrow\eqref{summary:tt}$, first observe that $f$ is chain transitive by \cite[Theorem~3.4]{KorASP}.
It is also known that if $f$ has average shadowing property then so does $f^n$ for every $n\geq 1$ (e.g. see Lemma~3.3 in \cite{Niu}).
In particular, by shadowing, $f$ is totally transitive.

Equivalence of conditions \eqref{summary:tt}--\eqref{summary:spec} as well as the ``moreover'' part are \cite[Theorem~1]{KwOp}.
The proof that specification implies almost specification can be found in \cite[Proposition 2.1]{PS} (see also comments in \cite{T}).
\end{proof}

\section{Weak mixing}

In general, no recurrence property is implied by almost specification or the average shadowing property.
We show this in Section \ref{sec:examples} below.
Nevertheless, under some additional assumptions one can get a positive result. For example,
Niu \cite[Corollary 3.8]{Niu} proved that a system with the average shadowing property
and a dense set of minimal points must be weakly mixing.
Here we prove that a compact system with almost specification and an invariant measure with full support
has a dense set of minimal points. We do not know whether the same holds if we assume only (asymptotic) average shadowing
(Question \ref{q:dense_minimal}). However, we can generalize Niu's result and show that his conclusion
holds under (a priori weaker) assumption of the existence of an invariant measure with full support.
But if the answer to Question \ref{q:dense_minimal} or Question \ref{q:converse_implications} is positive,
then Theorem \ref{thm:measure+asp-imply-wm} is merely a reproof of Niu's result. Note that there are examples of non-trivial proximal systems with
an invariant measure of full support. For example one may consider the restriction of a shift constructed in \cite[Theorem 5.6]{KwSpacing}
to the support of any of its invariant measures of positive entropy (there is at least one such measure).

\begin{thm}\label{thm:dense_minimal}
Assume that a compact dynamical system $(X,f)$ has an invariant measure with full support. If $f$ has the almost specification property, then it has a dense set of minimal points.
\end{thm}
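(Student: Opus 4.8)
The plan is to reduce the statement to a recurrence construction and then to feed that construction into the infinite almost specification lemma.

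First I would reformulate the goal. A point $x$ lies in a minimal set (equivalently, its orbit closure is minimal) if and only if it is \emph{uniformly recurrent}, that is, for every $\eps>0$ the set $\set{n\ge 0:\dd(f^n(x),x)<\eps}$ is syndetic (has bounded gaps); this is the classical Birkhoff--Gottschalk--Hedlund characterisation of minimal points. Hence it suffices to prove that every nonempty open set $W$ contains a uniformly recurrent point. Fix $w\in W$ and $r>0$ with $B(w,r)\subset W$; the aim is to construct a uniformly recurrent $z\in B(w,r)$. Note first that a fully supported invariant measure forces $f$ to be surjective, since the open set $X\setminus f(X)$ is null (its preimage is all of $X$) and a fully supported measure charges every nonempty open set; in particular $f$ is chain mixing by Lemma~\ref{lem:surj-almost-spec-implies-chain-mix}.

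The full-support hypothesis enters as follows. If $\mu$ is invariant with $\supp \mu=X$, then $\mu(B(w,\eps))>0$ for every $\eps>0$; equivalently $B(w,\eps)$ is not universally null, so $\xi(B(w,\eps))>0$ by Theorem~\ref{thm:xi_univnull}. More usefully, by the classical recurrence theorem of Khintchine the set of return times $N(B(w,\eps),B(w,\eps))$ is \emph{syndetic}. Thus for each $\eps$ there is a bound $R_\eps$ so that, at bounded spacing, one finds \emph{near-loops}: orbit segments $q,f(q),\ldots,f^{t}(q)$ of length $t\le R_\eps$ whose first and last points both lie in $B(w,\eps)$. These bounded-length near-loops are the building blocks to be glued. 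Then I would run a nested, Toeplitz-type construction: choose precisions $\eps_k\downarrow 0$ with $\eps_1<r$ and block lengths $n_k\ge k_g(\eps_k)$ with $g(n_k,\eps_k)/n_k$ as small as we wish, and arrange the near-loops for the scales $\eps_1,\eps_2,\ldots$ into one sequence of triples $(x_j,n_j,\eps_j)$ in which the level-$k$ near-loops reoccur, within uniformly bounded windows, inside every higher-level block. Feeding this sequence into Lemma~\ref{lem:inf_almostspec} yields a point $z$ with $f^{l_j}(z)\in\overline{B_{n_j}(g;x_j,\eps_j)}$ for all $j$. Then $z\in B(w,\eps_1)\subset W$, and for each $k$ the orbit of $z$ passes within $3\eps_k$ of $z$ along a syndetic set of times, so $z$ is uniformly recurrent, hence a minimal point in $W$; as $W$ was arbitrary, the minimal points are dense.

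The main obstacle is precisely the bookkeeping hidden in the last paragraph: the almost specification property permits up to $g(n_j,\eps_j)$ \emph{uncontrolled} mistakes inside each block, and these may land exactly at the seams where proximity of $f^{l_j}(z)$ to $w$ is needed for a return. Guaranteeing that the mistake-free returns at each scale $\eps_k$ stay syndetic --- uniformly bounded gaps over the whole infinite orbit and simultaneously for every $k$ --- rather than merely of positive density, is the delicate point, since positive density of returns does \emph{not} imply bounded gaps. This is where the full-support measure is indispensable: the syndeticity of $N(B(w,\eps),B(w,\eps))$ supplies a reservoir of bounded-length near-loops that is dense in time, which, together with the sublinearity $g(n,\eps)/n\to 0$, lets the nested construction reinstate the fine-scale recurrence faster than the mistakes can erase it. Without a fully supported measure one loses this reservoir, consistent with the authors leaving the analogous question for (asymptotic) average shadowing open.
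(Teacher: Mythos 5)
There is a genuine gap, and you have in fact pointed at it yourself without closing it. Your plan requires the point $z$ produced by Lemma~\ref{lem:inf_almostspec} to be \emph{itself} uniformly recurrent (and to lie in $W$), but almost specification gives no control whatsoever over \emph{where} the $g(n_j,\eps_j)$ mistakes fall inside each block: the set $\Lambda_j\in I(g;n_j,\eps_j)$ need not contain the index $0$, so already the claim $z\in B(w,\eps_1)$ can fail, and at each scale $\eps_k$ the mistakes can be placed adversarially on exactly the times you designated for returns. A counting argument of the form $g(n,\eps)/n<\gamma/2<{}$(density of designated return times) guarantees \emph{at least one} mistake-free return per block --- hence syndetic returns with gaps at most $2n$ --- but it cannot guarantee that a \emph{prescribed} time (such as time $0$, or the seam times of your Toeplitz scheme) is mistake-free, and uniform recurrence of $z$ at all scales simultaneously is exactly such a prescribed-time requirement. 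There is also a secondary problem: your ``near-loops'' are orbit segments of varying points $q$ of length $t\le R_\eps$, while the almost specification property only accepts tracing targets that are orbit segments of single points, of length at least $k_g(\eps)$, which may far exceed $R_\eps$; a concatenation of near-loops of different points is not such a segment.

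The paper resolves this by \emph{not} asking the traced point to be minimal. Using the full-support measure, Theorem~\ref{thm:xi_univnull} and Lemma~\ref{lem:density} produce a single point $x$ with $d(N(x,W))=\gamma>0$; tracing the constant sequence of triples $(x,n,\eps)$ via Lemma~\ref{lem:inf_almostspec}, with $n$ chosen so that $g(n,\eps)<n\gamma/2\le\density(N(x,W)\mid n)$, yields a point $y$ whose visit times to a slightly larger open set $V$ are syndetic with gaps at most $2n$ --- this only needs one surviving return per block, which is what the counting argument delivers. The minimal point is then obtained from the Auslander--Ellis theorem: $y$ is proximal to some minimal point $z$, and syndeticity of $N(y,V)$ together with proximality forces $f^l(z)\in U$ for some $l$, whence $f^l(z)$ is a minimal point in $U$. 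This final step (Auslander--Ellis plus proximality) is the idea missing from your argument; with it, the delicate bookkeeping you describe becomes unnecessary.
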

\begin{proof}
Fix any nonempty open set $U$. We can find $\eps>0$ and nonempty open sets $W\subset V\subset U$ such that the $\eps$-neighborhood of $\overline{W}$ is contained in $V$ and $\overline{V}\subset U$. Since $U$ is not universally null we conclude from Theorem \ref{thm:xi_univnull} and Lemma \ref{lem:density} that we can find $\gamma>0$ and a point $x\in W$ such that $d(N(x,W))=\gamma$. Let $N\in \N$ be such that for every $n\ge N$ we have
$\density(N(x,W)|n)\ge n\gamma/2$. Using the almost specification property we can find an $M>0$ such that for all $m\ge M$ we have $g(\eps,m)< m\gamma/2$. Let $n=\max\{N,M,k_g(\eps)\}$. Let $\{x_j\}_{j=0}^\infty$, $\{\eps_j\}_{j=0}^\infty$, $\{n_j\}_{j=0}^\infty$  be constant sequences, where $x_j=x$, $\eps_j=\eps$, $n_j=n$ for every $j\in\N$. By Lemma \ref{lem:inf_almostspec} there is a point $y$ such that $f^{jn}(y)\in B_n(g;x,\eps)$ for every $j\in\N$. We claim that
$N(y,V)$ is syndetic (has gaps bounded by $2n$). Assume conversely that $f^l(y)\notin V$ for $2n$ consecutive indices $l$. In particular,
for some $j\ge 0$ and every $0\le i < n$ such that $f^i(x)\in W$ we necessarily have $\dd(f^{jn+i}(z),f^i(x))\ge\eps$. But this leads to a contradiction: $n\gamma/2\le g(\eps,n)< n\gamma/2$. Therefore $N(y,V)$ is syndetic with gaps bounded by $2n$ as claimed. By the Auslander-Ellis Theorem \cite[Theorem 8.7]{Fur} there is a minimal point $z$ proximal to $y$. It is easy to see that $f^l(z)\in U$ for some $l\ge 0$. Since $f^l(z)$ is also a minimal point the proof is finished.
\end{proof}


\begin{lem}\label{lem:asp_mes}
Let $p,q\in X$. Let $A,B\subset \N$ be such that $d^*(A)>0$ and $d^*(B)>0$. If $X$ is compact and $f\colon X\ra X$ has the average shadowing property, then for every $\eps>0$ there exist a point $z\in X$ and integers $a_0<b_0<a_1<b_1<\ldots$ such that for every $i\in\N$ we have $a_i\in A$, $b_i\in B$, $\dd(f^{a_i}(z),f^{a_i}(p))<\eps$, and $\dd(f^{b_i}(z),f^{b_i}(q))<\eps$.
\end{lem}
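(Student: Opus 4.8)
The plan is to produce the point $z$ by applying the average shadowing property to a carefully assembled average-pseudo-orbit, and then to read off the times $a_i,b_i$ from the set of times where $z$ shadows this pseudo-orbit well. The pseudo-orbit $\set{x_i}_{i=0}^\infty$ will be built from alternating blocks: on a family of ``$p$-blocks'' $P\subset\N$ I set $x_i=f^i(p)$, and on the complementary ``$q$-blocks'' $Q=\N\setminus P$ I set $x_i=f^i(q)$. Inside each block $\set{x_i}$ is a genuine orbit segment, so $\dd(f(x_i),x_{i+1})=0$ except at the block boundaries, where it is at most $\diam(X)$. Hence, provided every block has length at least some $N_0$, consecutive boundaries are at least $N_0$ apart, and a direct estimate shows that $\set{x_i}$ is a $\delta$-average-pseudo-orbit as soon as $N_0>2\diam(X)/\delta$. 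The point of taking $x_i=f^i(p)$ (rather than restarting the orbit at each block) is that at any time $i\in P$ we have exactly $x_i=f^i(p)$, so ``$z$ shadows $x$ at time $i$'' translates literally into $\dd(f^i(z),f^i(p))<\eps$, and symmetrically for $q$ on $Q$.

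The key point is to arrange the blocks so that $A\cap P$ and $B\cap Q$ have \emph{positive upper density}, not merely that they are nonempty. Write $\alpha=d^*(A)>0$ and $\beta=d^*(B)>0$. The obstacle is that upper density only guarantees high density in prefixes $[0,m)$, whereas the blocks must be placed arbitrarily far out. I would resolve this with a ``deep window'' trick: having built the pseudo-orbit up to a boundary $L$, I choose the next boundary $m$ to be a length with $\density(A\mid m)\ge \tfrac{\alpha}{2}m$ (infinitely many such $m$ exist because $d^*(A)=\alpha$) and with $m$ so large that $L\le \tfrac{\alpha}{4}m$ and $m-L\ge N_0$. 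Then $\density(A\mid [L,m))\ge \tfrac{\alpha}{2}m-L\ge \tfrac{\alpha}{4}m$, so the new $p$-block $[L,m)$ carries at least $\tfrac{\alpha}{4}m$ elements of $A$; since such blocks occur with right endpoints $m\to\infty$, this forces $d^*(A\cap P)\ge\alpha/4$. Interleaving these with $q$-blocks chosen the same way from lengths where $B$ is dense yields $d^*(B\cap Q)\ge\beta/4$ as well, and every block has length at least $N_0$.

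Now I fix the constants in the right order to avoid circularity: choose $\eps'=\eps\cdot\min(\alpha,\beta)/8$ first, let $\delta$ be given by the average shadowing property for $\eps'$, set $N_0>2\diam(X)/\delta$, and only then build the blocks above. Average shadowing supplies a point $z$ with $\limsup_n \tfrac1n\sum_{i=0}^{n-1}\dd(f^i(z),x_i)<\eps'$. By a Markov-type estimate the ``bad'' set $E=\set{i:\dd(f^i(z),x_i)\ge\eps}$ satisfies $\eps\,d^*(E)\le\eps'$, so $d^*(E)\le\min(\alpha,\beta)/8$. Using subadditivity of $d^*$ in the form $d^*(S\setminus E)\ge d^*(S)-d^*(E)$ gives $d^*(A\cap P\setminus E)\ge \alpha/4-\alpha/8>0$ and $d^*(B\cap Q\setminus E)\ge\beta/4-\beta/8>0$; in particular both sets are infinite. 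For $i\in A\cap P\setminus E$ we have $i\in A$ and $\dd(f^i(z),f^i(p))=\dd(f^i(z),x_i)<\eps$, and symmetrically for $i\in B\cap Q\setminus E$ with $q$. Finally, since both sets are infinite I can choose indices alternately, $a_0<b_0<a_1<b_1<\cdots$, taking each $a_i\in A\cap P\setminus E$ and each $b_i\in B\cap Q\setminus E$ larger than the previously chosen index, which produces exactly the asserted sequences.

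I expect the main obstacle to be the density bookkeeping of the second paragraph: one must convert the prefix-based information $d^*(A)>0$ into blocks lying arbitrarily far out that nonetheless capture a fixed positive fraction of $A$, and simultaneously keep all blocks long enough for the average-pseudo-orbit estimate while preserving the interleaving of $p$- and $q$-blocks. Once $d^*(A\cap P)$ and $d^*(B\cap Q)$ are bounded below by constants depending only on $\alpha,\beta$, the choice of $\eps'$ and the remaining steps are routine.
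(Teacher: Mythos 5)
Your proposal is correct and follows essentially the same route as the paper: both build the alternating pseudo-orbit $x_i=f^i(p)$ on $p$-blocks and $x_i=f^i(q)$ on $q$-blocks, with block endpoints chosen deep enough that each block captures a fixed positive fraction (relative to its right endpoint) of $A$ or $B$, then apply average shadowing at precision proportional to $\eps$ times that fraction. The only difference is organizational: the paper concludes by contradiction (if the alternating indices cannot be chosen, the average error over a prefix exceeds $\gamma\eps$), whereas you run the same count directly via a Markov estimate and subadditivity of upper density; both are sound.
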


\begin{proof}
We may assume without loss of generality that $\mbox{diam}(X)=1$. Let $0<\gamma<\min(d^*(A), d^*(B))$. Fix any $\eps>0$ and let $\alpha>0$ be given for $\gamma\eps$ by the average shadowing property. Using the definition of the upper asymptotic density we can easily construct an increasing sequence of integers $0=n_0<n_1<\ldots$ such that $2<\alpha n_1$ and for any $i\in\Zp$ we have:
\begin{enumerate}
\item $2 n_i<n_{i+1}$,
\item $\frac{1}{n_{2i+1}}\density(A\cap[n_{2i},n_{2i+1})\mid n_{2i+1})>\gamma$,
\item $\frac{1}{n_{2i}}\density(B\cap[n_{2i-1},n_{2i})\mid n_{2i})>\gamma$.
\end{enumerate}
Next we define a sequence of points $y_j\in X$ by
\[y_j=
\begin{cases}
f^j(p) &\text{if }j\in [n_{2i}, n_{2i+1}) \text{ for some }i,\\
f^j(q) & \text{otherwise}.
\end{cases}\]
We claim that the sequence $\set{y_j}_{j=0}^\infty$ is an $\alpha$-average-pseudo-orbit of $f$. To prove it observe that $\dd(f(y_{j-1}), y_j)$ can be positive only if $j\in\set{n_i}_{i=0}^\infty$. But if we put $N=n_1$ then, by the definition of the sequence $\set{n_j}_{i=0}^\infty$, for every $j\in\N$ the interval $[j,j+N]$ can contain at most one element of this sequence. If we now fix any $n\geq N$ we may find $s>0$ such that $Ns\leq n <(s+1)N$. Then for every $k\geq 0$ we have \[\frac{1}{n}\sum_{i=0}^{n-1}\dd(f(y_{i+k}),y_{i+k+1})<\frac{s+1}{sN}\leq \frac{2}{n_1}<\alpha,\] proving that $\set{y_j}_{j=0}^\infty$ is an $\alpha$-average-pseudo-orbit. It is, therefore, $\gamma\eps$-shadowed in average by some $z\in X$.

If the choice of sequences $\{a_i\}_{i=0}^\infty$ and $\{b_i\}_{i=0}^\infty$ is not possible, then, for all $i$ large enough, at least one of the following two conditions must be satisfied:
\begin{enumerate}
\item for all $j\in A\cap[n_{2i},n_{2i+1})$ we have $\dd(f^j(z),f^j(p))\geq\eps$,
\item for all $j\in B\cap[n_{2i-1},n_{2i})$ we have $\dd(f^j(z),f^j(q))\geq\eps$.
\end{enumerate}
We may assume without loss of generality that the first condition holds. Then for all $i$ large enough we have
\[
\frac{1}{n_{2i+1}}\sum_{s=0}^{n_{2i+1}-1}\dd(f^s(z),f^s(p))\geq\eps \frac{1}{n_{2i+1}}\density(A\cap[n_{2i},n_{2i+1})\mid n_{2i+1})>\gamma\eps,
\]
contradicting that $z$ $\gamma\eps$-shadows on average an $\alpha$-average pseudo-orbit $\{y_j\}_{j=0}^\infty$.
\end{proof}

The following theorem generalizes \cite[Theorem 3.7]{Niu}.

\begin{thm}\label{thm:measure+asp-imply-wm}
Assume that a compact dynamical system $(X,f)$ has an invariant measure with full support. Then if $f$ has the average shadowing property, then it is weakly mixing.
\end{thm}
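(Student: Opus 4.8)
The plan is to prove weak mixing by showing that $f \times f$ is transitive, and to do this I would exploit Lemma~\ref{lem:asp_mes}, which is clearly the engine designed for exactly this purpose. Recall that $f$ is weakly mixing if and only if for any four nonempty open sets $U_1, U_2, V_1, V_2 \subset X$ there is an $n > 0$ with $f^n(U_1) \cap V_1 \neq \emptyset$ and $f^n(U_2) \cap V_2 \neq \emptyset$. So I would fix such four sets and aim to produce a single time $n$ that works simultaneously for both pairs, together with a point whose orbit realizes both intersections.

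First I would set up the pairs of sets so that Lemma~\ref{lem:asp_mes} can be applied. Since $(X,f)$ has the average shadowing property, Lemma~\ref{lem:surj-asp-implies-chain-mix} gives chain mixing (after checking surjectivity, which follows because the support of the full invariant measure is all of $X$ and invariance forces $f$ to be onto). The key observation is that Theorem~\ref{thm:xi_univnull} together with the full-support measure guarantees that \emph{every} nonempty open set $U$ satisfies $\xi(U) > 0$, hence by Lemma~\ref{lem:density} there is a point whose visiting-time set $N(x,U)$ has positive density. This is precisely the hypothesis $d^*(A) > 0$, $d^*(B) > 0$ needed to invoke Lemma~\ref{lem:asp_mes}: I would pick points $p, q$ and open sets so that the sets of good return times $A$ (times when the orbit of $p$ lands in $U_1$ and simultaneously hits $V_1$) and $B$ (analogously for $U_2, V_2$) both have positive upper density.

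The heart of the argument, and the step I expect to be the main obstacle, is converting the ``alternating'' conclusion of Lemma~\ref{lem:asp_mes}—which produces a single $z$ tracing $p$ along $A$ and $q$ along $B$ at \emph{interleaved} times $a_0 < b_0 < a_1 < \cdots$—into the \emph{simultaneous} statement required for weak mixing, where one common time index must serve both coordinates. The trick is to choose $p$ and $q$ cleverly: I would let $p$ be a point whose orbit visits $U_1$ at a dense set of times while $q$'s orbit visits $U_2$, but arranged via chain mixing so that the return from a $U_1$-visit into $V_1$ and from a $U_2$-visit into $V_2$ happen after a common lag. Concretely, by chain mixing and the density statements one can build the sets $A, B$ as sets of \emph{entry} times into $U_1$ and $U_2$ respectively, whose images under a fixed transition time land in $V_1$ and $V_2$; the shadowing point $z$ then supplies, at a time $a_i \in A$, a point close to the orbit of $p$ inside $U_1$, and by the common lag one recovers the needed pair. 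The delicate bookkeeping is ensuring the $\eps$-tracing controls membership in the open sets (one shrinks $\eps$ below the Lebesgue numbers of the chosen sets) and that the interleaving can be forced to align—this is where most of the technical care must go.

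Once the alignment is achieved, the conclusion is immediate: the point $z$ produced by Lemma~\ref{lem:asp_mes} witnesses that for some common $n$ both $f^n(U_1)\cap V_1$ and $f^n(U_2)\cap V_2$ are nonempty, which is exactly transitivity of $f\times f$, i.e. weak mixing. I would also double-check that the roles of $U_i$ and $V_i$ can be symmetrized so that the positive-density hypotheses apply to the correct sets, again using that \emph{every} nonempty open set has positive visit frequency under the full-support assumption, so there is no loss in choosing whichever sets make the densities work.
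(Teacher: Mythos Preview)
Your proposal has a genuine gap exactly where you flag the ``main obstacle.'' Lemma~\ref{lem:asp_mes} produces a single point $z$ that traces the orbit of $p$ at times in $A$ and the orbit of $q$ at times in $B$, but these times are distinct and interleaved. Your suggested fix, to arrange via chain mixing that the transitions $U_1\to V_1$ and $U_2\to V_2$ occur after a \emph{common lag} $m$, is not carried out and does not obviously work: chain mixing supplies only $\delta$-pseudo-orbits between arbitrary points, not actual orbit segments, so it does not by itself give you a positive-density set of times $a$ with $f^a(p)\in U_1$ and $f^{a+m}(p)\in V_1$ for some fixed $m$. Producing such an $m$ that works simultaneously for both pairs $(U_1,V_1)$ and $(U_2,V_2)$ is essentially the statement you are trying to prove, and nothing in your sketch bridges this circularity.

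The paper sidesteps the alignment problem entirely. It first uses Lemma~\ref{lem:asp_mes} only to prove that $f$ is \emph{transitive}: given open $U,V$, pick $p\in U'\subset U$ and $q\in V'\subset V$ whose return-time sets to $U'$ and $V'$ have positive upper density (this uses the full-support measure via Theorem~\ref{thm:xi_univnull}); the lemma then yields $z$ and times $i<j$ with $f^i(z)\in U$ and $f^j(z)\in V$, so $f^{j-i}(U)\cap V\neq\emptyset$. No synchronization is needed here because transitivity only asks for one pair at a time. To get weak mixing the paper then invokes Niu's result (Proposition~3.5 in \cite{Niu}) that the average shadowing property passes to the product $f\times f$, observes that $\mu\times\mu$ is a fully supported invariant measure for $f\times f$, and reruns the transitivity argument on the product system. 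This product trick is the missing idea in your proposal.
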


\begin{proof}
Fix any nonempty open sets $U,V$. We can find $\eps>0$ and nonempty open sets $U',V'$ such that the $\eps$-neighborhood of $\overline{U'}$ is contained in $U$ and the $\eps$-neighborhood of $\overline{V'}$ is contained in $V$. Since neither $U'$, nor $V'$ is universally null we conclude from Theorem \ref{thm:xi_univnull} that we can find $\gamma>0$, points $p\in U'$, $q\in V'$, and sets $A,B\subset \N$ such that $d^*(A)>\gamma$, $d^*(B)> \gamma$, and $f^i(p)\in U'$, $f^j(q)\in V'$ for every $i\in A$, $j\in B$. Then by Lemma~\ref{lem:asp_mes} there is $z\in X$ and $i<j$ such that $d(f^i(z),f^i(p))<\eps$ and $d(f^j(z),f^j(q))<\eps$. In particular, $f^i(z)\in U$ and $f^j(z)\in V$. This shows that $f$ is transitive.

It was shown in Proposition 3.5 in \cite{Niu} that if $f$ has the average shadowing property, then $f\times f$ also has this property. Obviously $\mu \times \mu$ is a fully supported invariant measure for $f\times f$, so by the above arguments, $f\times f$ is transitive, which ends the proof.
\end{proof}

\section{Measure center}\label{sec:sec_measure_center}

In this section we prove that the generalized shadowing and specification are connected with the behaviour of the dynamical system on its measure center.

\subsection{Almost specification}
In what follows, we will use the notation introduced in Section~\ref{prelims}, in particular the definition of the function $\xi$.


\begin{thm}\label{thm:almostspec_measure}
If a closed invariant set $A$ contains the measure center of a compact dynamical system $(X,f)$ and $f|_A$ has the almost specification property on $A$, then so does $f$ on $X$.
\end{thm}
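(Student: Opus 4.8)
The plan is to transfer the almost specification property from $f|_A$ to $f$ using Corollary~\ref{cor:measure_cent}, which tells us that orbits of $f$ spend an arbitrarily large proportion of their time within $\eps$ of $A$. The key idea is that we are allowed a controlled number of mistakes, so the (relatively few) times at which an orbit strays from $A$ can be absorbed into the mistake count, while on the bulk of the orbit we can pretend we are working inside $A$ and invoke almost specification there.

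First I would fix the mistake function $g_A$ and the function $k_{g_A}$ witnessing almost specification for $f|_A$. The candidate mistake function for $f$ on $X$ must dominate $g_A$ but also leave room for the extra mistakes coming from excursions away from $A$; so I would define $g(n,\eps)$ to be something like $g_A(n,\eps/2)$ plus a term of order $\eps n$ accounting for the fraction of indices where the orbit is more than $\eps/2$ from $A$. The point is that by Corollary~\ref{cor:measure_cent}, for a suitable threshold $N$ and all $n \ge N$, \emph{every} point $x \in X$ satisfies
\[
\frac{1}{n}\#\set{0\leq i<n \;:\; \dd(f^i(x),A) \geq \eps/2} < \eps/2,
\]
so the number of ``bad'' indices is at most $(\eps/2)n$, which grows sublinearly relative to the mistake budget and keeps $\lim_{n\to\infty} g(n,\eps)/n = 0$. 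I would set $k_g(\eps)$ large enough to simultaneously exceed $k_{g_A}(\eps/2)$ and the threshold $N$ from the corollary.

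Given points $x_1,\dots,x_m \in X$, precisions $\eps_j$, and lengths $n_j \ge k_g(\eps_j)$, the strategy is to replace each $x_j$ by a nearby point in $A$ and then apply almost specification on $A$. The natural choice: for each $j$ pick a point $x_j' \in A$ with $\dd(x_j,x_j')$ small, but this only controls the zeroth coordinate, not the whole orbit segment. The correct move is instead to approximate each orbit segment of $x_j$ by a genuine orbit segment inside $A$ wherever $f^i(x_j)$ lies close to $A$; at the indices where $f^i(x_j)$ is far from $A$ we simply declare a mistake. So I would apply almost specification for $f|_A$ to the points $x_j' \in A$ (chosen close to $x_j$) with precision $\eps_j/2$ and the same lengths, obtaining $z \in A \subset X$ tracing each $x_j'$ with at most $g_A(n_j,\eps_j/2)$ mistakes along $A$. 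The triangle inequality then combines the $\eps_j/2$-closeness of $z$ to $x_j'$ with the closeness of $f^i(x_j')$ to $f^i(x_j)$ on the good set.

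The hard part will be handling the coupling between the two sources of error: the almost-specification mistakes on $A$ and the excursion mistakes, because the indices where $f^i(x_j)$ is far from $A$ are determined by $x_j$, whereas the $\eps_j/2$-tracing of $x_j'$ by $z$ is only guaranteed on a $g_A$-almost-full set of $A$-indices. I need to argue that the union of these two exceptional sets still has size at most $g(n_j,\eps_j)$, which forces the definition of $g$ to add the two bounds, and I must verify that choosing $x_j'$ close enough to $x_j$ (using uniform continuity of $f^0,\dots,f^{n_j-1}$ on the compact $X$) makes $\dd(f^i(x_j'),f^i(x_j))$ uniformly small on the good set, so that $f^{l_j}(z) \in B_{n_j}(g;x_j,\eps_j)$ as required. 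The subtlety is that the required closeness of $x_j'$ to $x_j$ depends on $n_j$, so I must be careful that the choice of approximating point can be made after the lengths are fixed, which is legitimate since in the definition of almost specification the points and lengths are presented together.
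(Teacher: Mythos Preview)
Your plan has a genuine gap at the step where you produce the approximating points $x_j' \in A$. You write that you will choose $x_j' \in A$ ``close enough to $x_j$ (using uniform continuity of $f^0,\dots,f^{n_j-1}$)'' so that $\dd(f^i(x_j'),f^i(x_j))$ is small on the good set. But the points $x_j \in X$ are arbitrary; there is no reason for $x_j$ to lie within any prescribed distance of $A$, let alone within the very small $\delta_j$ that uniform continuity over $n_j$ iterates demands. Corollary~\ref{cor:measure_cent} only says that \emph{most iterates} $f^i(x_j)$ are close to $A$; it does not put $x_j$ itself near $A$, nor does it produce a single point of $A$ whose full length-$n_j$ orbit shadows that of $x_j$. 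Your sentence ``approximate each orbit segment of $x_j$ by a genuine orbit segment inside $A$'' is exactly the right idea, but you do not say how to obtain such a segment, and the concrete construction you then give (a single $x_j'$ plus uniform continuity) does not achieve it.

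The paper closes this gap by a \emph{second, nested} use of the almost specification property on $A$. The authors first establish an auxiliary property $(\star)$: there is a mistake function $\gamma$ such that for every $x\in X$ and large $n$ some point $z\in A$ lies in $B_n(\gamma;x,\eps)$. To prove $(\star)$ they cut the orbit of $x$ into short blocks of a fixed length $s$; by uniform continuity over only $s$ steps, whenever the block's starting point $f^{(k-1)s}(x)$ is $\delta$-close to $A$ (which happens for all but at most $P(n)$ blocks, $P(n)=o(n)$ by Corollary~\ref{cor:measure_cent}) they pick a nearby anchor $z_k\in A$; then almost specification on $A$ glues these anchors into a single $z\in A$. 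The final mistake function is $g(n,\eps)=g_A(n,\eps/2)+\gamma(n,\eps/2)$. This block-and-glue step is the missing ingredient in your proposal; without it, the required $x_j'\in A$ simply need not exist.
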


\begin{proof}
Let $g_A\colon \Zp\times (0,\eps_0]\mapsto\mathbb{N}$ be a mistake function for $f|_A$ and let $k_{g_A}$ be provided by the almost specification property.
We claim that it is sufficient to prove that there is a mistake function $\gamma\colon \Zp\times (0,\eps_0]\ra\mathbb{N}$
fulfilling
\begin{eqnarray*}
(\star)&&\mbox{for every $\eps\in (0,\eps_0]$ there is $N=N_\gamma(\eps)$ such that for every $x\in X$ and every $n\geq N$}\\
&&\mbox{there is a point $z\in A\cap B_n(\gamma;x,\eps)$.}
\end{eqnarray*}

First we prove that if there is $\gamma$ satisfying ($\star$), then $f$ has the almost specification property on $X$ with the mistake function
$$
g(n,\eps)=g_A(n,\eps/2)+\gamma(n,\eps/2), \qquad \text{where }(n,\eps)\in \Zp\times (0,\eps_0].
$$
and with a function $k_g(\eps)=\max\{k_{g_A}(\eps),N_\gamma(\eps)\}$. In order to prove the claim, fix any
$m\geq 1$, any $\eps_1,\ldots,\eps_m > 0$, any points $x_1, \ldots, x_m \in X$, and any integers
$n_1 \geq k_{g}(\eps_1),\ldots,n_m \geq k_{g}(\eps_m)$. Put $n_0=0$ and denote
$$
l_j=\sum_{s=0}^{j-1}n_s,\,\text{for }j=1,\ldots,m.
$$
By the choice of $\gamma$, for each $j=1,\ldots,m$ there is a set $\Gamma_j\in I(\gamma;n_j,\eps_j/2)$ and a point $z_j\in A\cap B_{\Gamma_j}(x_j,\eps_j/2)$.
Let $z$ be a point obtained by the almost specification property for the aforementioned points $z_1,\ldots,z_m$ and constants $n_j$ and $\eps_j/2$.
Strictly speaking, point $z\in X$ is such that for every $j=1,\ldots,m$ we have
$$
f^{l_j}(z)\in B_{n_j}(g_A;z_j,\eps_j/2).
$$
In particular, for every $j$ there is a set $\Lambda_j\in I(g_A;n_j,\eps_j/2)$ such that $f^{l_j}(z)\in B_{\Lambda_j}(g_A;y_j,\eps_j/2).$
Put $K_j=\Gamma_j\cap \Lambda_j$ and observe that
$$
n_j-\#K_j \leq (n_j-\#\Gamma_j) + (n_j-\#\Lambda_j)\leq \gamma(n_j,\eps_j/2)+g_A(n_j,\eps_j/2)=g(n_j,\eps_j)
$$
which shows that $K_j\in I(g;n_j,\eps_j)$. Additionally, if $i\in K_j$, then
$\dd(f^{l_j+i}(z),f^i(z_j))<\eps_j/2$ and $\dd(f^i(x_j),f^i(z_j))<\eps_j/2$, hence
$\dd(f^{l_j+i}(z),f^i(x_j))<\eps_j$. This proves that
$$
f^{l_j}(z)\in B_{K_j}(x_j,\eps_j)\subset B_{n_j}(g;x_j,\eps_j),
$$
and therefore it only remains to prove that $\gamma$ can be constructed in such a way that $(\star)$ is satisfied.

Take any $\eps\in(0,\eps_0]$ and any $s\geq k_{g_A}(\eps/2)$. Fix a constant $\delta>0$ such that for every $x,y\in X$ with $\dd(x,y)<\delta$ we have $\dd(f^j(x),f^j(y))<\eps/2$ for $j=0,\ldots,s-1$. Denote $P(n)=\sup_{x\in X}\#\{0\leq i<n\:|\:\dd(f^i(x),A)\geq\delta\}$ and observe that, by Corollary \ref{cor:measure_cent}, we have $\lim_{n\ra\infty}P(n)/n=0$. Pick $N_\gamma^s(\eps)$ large enough for the inequality
$$
s(P(n)+1)\leq\frac{n\cdot g_A(\eps/2,s)}{s}
$$
to hold for every $n\geq N_\gamma^s(\eps)$. Let $n\geq N_\gamma^s(\eps)$ be given. Assume that $n=ps+r$, where $p,r\in\N$ and $r<s$. For $j=1,\ldots,p$ define $n_j=s$, $\eps_j=\delta$, and let $z_j\in A$ be any point such that $\dd(z_j,f^{(j-1)s}(x))$ realizes the distance between the closed set $A$ and a point $f^{(j-1)s}(x)$. Let $z\in A$ be a point obtained by the almost specification property of $f|_A$ for the points $z_1,\ldots,z_p$ and constants $n_j$ and $\eps_j$. Observe that
\begin{eqnarray*}
\#\{0\leq i<n\:|\:\dd(f^i(x),f^i(z))\geq\eps\}&\leq&p\cdot g_A(\eps/2,s)+r+sP(n)\\
&\leq&\frac{n\cdot g_A(\eps/2,s)}{s}+s(P(n)+1)\\
&\leq&\frac{2n\cdot g_A(\eps/2,s)}{s}.
\end{eqnarray*}
To sum up, if $s\geq k_{g_A}(\eps/2)$ and $n\geq N_\gamma^s(\eps)$, then the orbit of length $n$ of every point $x\in X$ can be $\eps$-shadowed by the orbit of length $n$ of some point $z\in A$ with at most $2n\cdot g_A(\eps/2,s)/s$ errors.

Fix a sequence $\{s_i\}_{i=1}^\infty$ such that $k_{g_A}(\eps/2)=s_1<s_2<\ldots$ and put $N_\gamma(\eps)=k_{g_A}(\eps/2)$. Finally observe that the function
$$
\alpha(n,\eps)=\min_{N_\gamma^{s_i}(\eps)\leq n}\frac{2n\cdot g_A(\eps/2,s_i)}{s_i}
$$
satisfies ($\star$) aside from the fact that it does not necessarily have to be increasing with respect to $n$. Therefore $\gamma(n,\eps)=\max_{N_\gamma(\eps)\leq k\leq n}\alpha(k,\eps)$ is a mistake function satisfying ($\star$).

\end{proof}

\subsection{Average and asymptotic average shadowing properties}
In \cite{KuOp2} the authors considered properties that are sufficient to extend asymptotic average shadowing property
from a closed invariant set $A$ to the whole space (Theorem~\ref{thm:almostspec_measure} is, in fact, motivated by these studies).
Recall that by Corollary~\ref{cor:measure_cent} if a set $A$ contains the measure center of $f$ then for every $\varepsilon >0$ there exists $N\in\N$ such that for every $x\in X$
and $n\ge N$
we have
$$
\frac{1}{n}\#\set{0\leq i<n \;:\; \dd(f^i(x),A) <\varepsilon} >1-\varepsilon.
$$
Combining this observation with results of \cite{KuOp2} we obtain the following.
\begin{thm}
If a closed invariant set $A\subset X$ contains the measure center of a compact dynamical system $(X,f)$ and $f|_A$ has the asymptotic average shadowing property on $A$, then so does $f$ on $X$.
\end{thm}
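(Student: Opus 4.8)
The plan is to mimic the strategy of Theorem~\ref{thm:almostspec_measure}: rather than extend the tracing property orbit-by-orbit, I will push a given asymptotic average pseudo-orbit of $f$ onto $A$, shadow it there using the hypothesis on $f|_A$, and transfer the shadowing point back to $X$. So let $\set{x_j}_{j=0}^\infty$ be an asymptotic average pseudo-orbit of $f$ in $X$; the task is to produce $z\in X$ asymptotically shadowing it in average.

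First I would establish the key claim that \emph{an asymptotic average pseudo-orbit spends, in density, almost all of its time near $A$}, that is,
\[
\lim_{n\to\infty}\frac{1}{n}\sum_{i=0}^{n-1}\dd(x_i,A)=0.
\]
To see this, fix $\eps>0$ and let $N$ be the length supplied by Corollary~\ref{cor:measure_cent}, so that the genuine orbit of any point, of length $N$, lies within $\eps$ of $A$ on a $(1-\eps)$-fraction of its indices. By Lemma~\ref{Walters} applied to the asymptotic average pseudo-orbit condition there is a set $J$ with $d(J)=0$ and $\dd(f(x_j),x_{j+1})\to 0$ off $J$; by uniform continuity of $f$ on the compact space $X$, every block $[kN,(k+1)N)$ that avoids $J$ and starts far enough out is traced within $\eps$ by the true orbit of $x_{kN}$ (the finite-length tracing already used in the proof of Theorem~\ref{thm:almost-spec-implies-aasp}). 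Hence on such blocks $x_{kN+i}$ is $2\eps$-close to $A$ for all but an $\eps$-fraction of the indices $i$, while the blocks meeting $J$ form a set of density zero and contribute at most $\diam(X)$ each; letting $\eps\to 0$ yields the claim.

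Granting the claim, I would choose for each $j$ a point $y_j\in A$ with $\dd(x_j,y_j)$ essentially realizing $\dd(x_j,A)$ (within $1/(j+1)$, say), so that $\frac{1}{n}\sum_{i<n}\dd(x_i,y_i)\to 0$. Since $A$ is invariant, $f(y_j)\in A$, and the triangle-inequality estimate
\[
\dd(f(y_j),y_{j+1})\le \dd(f(y_j),f(x_j))+\dd(f(x_j),x_{j+1})+\dd(x_{j+1},y_{j+1})
\]
together with uniform continuity and one more application of Lemma~\ref{Walters} (to pass the Cesàro convergence of $\dd(x_j,y_j)$ through the modulus of continuity of $f$) shows that $\set{y_j}_{j=0}^\infty$ is an asymptotic average pseudo-orbit of $f|_A$. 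Applying the asymptotic average shadowing property of $f|_A$ gives $z\in A\subset X$ with $\frac{1}{n}\sum_{i<n}\dd(f^i(z),y_i)\to 0$, and a final triangle inequality gives $\frac{1}{n}\sum_{i<n}\dd(f^i(z),x_i)\to 0$, as required.

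The main obstacle is the claim in the second paragraph. The delicate point is that the asymptotic average pseudo-orbit controls only the \emph{average} of the consecutive errors $\dd(f(x_j),x_{j+1})$, whereas the finite-length tracing lemma needs each error inside a block to be small; this is precisely why I route the argument through Lemma~\ref{Walters}, which converts the average control into genuine smallness off a density-zero set, so that full blocks of good indices can be traced by true orbits and then compared with $A$ via Corollary~\ref{cor:measure_cent}. Everything else is bookkeeping with densities and the triangle inequality, and (as the surrounding text indicates) the whole argument can alternatively be quoted directly from the corresponding extension result in \cite{KuOp2}.
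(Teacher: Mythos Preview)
Your argument is correct and amounts to unpacking the paper's one-line proof, which simply invokes Corollary~\ref{cor:measure_cent} together with \cite[Theorem~3.3]{KuOp2}; what you have written is essentially a reconstruction of the proof of that cited extension theorem, as you yourself acknowledge in your final sentence. The ``key claim'' you isolate is the asymptotic-average analogue of Lemma~\ref{lematB}, and your derivation of it via Lemma~\ref{Walters}, uniform continuity, and Corollary~\ref{cor:measure_cent} is exactly the intended route.
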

\begin{proof}
It follows from Corollary~\ref{cor:measure_cent} and \cite[Theorem~3.3]{KuOp2}.
\end{proof}

To complete our considerations we will also prove the similar theorem for the average shadowing property.
In the proof we will make use of Corollary~\ref{cor:measure_cent} together with \cite[Lemma~3.5]{KuOp2}, which combine to yield the following lemma.

\begin{lem}\label{lematB}
If $f\colon X\ra X$ is a continuous map of a compact metric space and $A\subset X$ is a closed invariant set
 containing the measure center of $f$, then
for every $\eta >0$ there exist $K\in\N$ and $\beta>0$ such that for every $k\geq K$ and for every $\{x_i\}_{i=0}^{k-1}$ which is a $\beta$-pseudo-orbit of $f$ we have
\begin{equation}
\frac{1}{k}\cdot\#\set{0\leq i<k : \dd(x_i,A) <\eta} >1-\eta.\label{eq:eps_pseudo}
\end{equation}
\end{lem}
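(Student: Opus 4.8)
The statement of Lemma~\ref{lematB} combines two external results: Corollary~\ref{cor:measure_cent}, which is fully proved in the excerpt, and \cite[Lemma~3.5]{KuOp2}, which we are permitted to invoke. So the plan is to use these two ingredients in tandem: Corollary~\ref{cor:measure_cent} controls, for every point, the asymptotic density of iterates that stay $\eta$-close to $A$, while \cite[Lemma~3.5]{KuOp2} is the tool that lets us transfer such an averaged control from genuine orbits to $\beta$-pseudo-orbits.

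First I would fix $\eta>0$ and apply Corollary~\ref{cor:measure_cent} to produce an integer $N$ such that for every $x\in X$ and every $n\ge N$ we have
\[
\frac{1}{n}\#\set{0\leq i<n : \dd(f^i(x),A) <\eta/2} >1-\eta/2.
\]
This gives the desired density bound along true orbits, but with a slightly sharpened tolerance (using $\eta/2$ in place of $\eta$ in two places). The reason to sharpen is to leave room for the shadowing error that appears when we pass from pseudo-orbits to genuine orbits.

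Next I would invoke \cite[Lemma~3.5]{KuOp2}. The role of that lemma (as signalled by the authors' remark that it ``combines'' with Corollary~\ref{cor:measure_cent}) is precisely to say that a sufficiently fine finite $\beta$-pseudo-orbit is, on a set of indices of controlled density, within $\eta/2$ of a genuine orbit; equivalently, that the averaged proximity statement valid for orbits descends to pseudo-orbits at the cost of enlarging the tolerance and discarding a small-density set of bad indices. Concretely, I would choose $K\geq N$ and $\beta>0$ from \cite[Lemma~3.5]{KuOp2} so that every $\beta$-pseudo-orbit $\set{x_i}_{i=0}^{k-1}$ of length $k\geq K$ has, outside a set of indices of relative cardinality less than $\eta/2$, the property $\dd(x_i, f^i(y))<\eta/2$ for an appropriate reference point $y$. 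Combining this with the orbit estimate above and the triangle inequality $\dd(x_i,A)\leq \dd(x_i,f^i(y))+\dd(f^i(y),A)$ then yields
\[
\frac{1}{k}\cdot\#\set{0\leq i<k : \dd(x_i,A) <\eta} >1-\eta,
\]
since an index $i$ can fail $\dd(x_i,A)<\eta$ only if it is either a bad shadowing index or an index where $\dd(f^i(y),A)\geq \eta/2$, and the combined density of these two sets is below $\eta$.

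The main obstacle is ensuring that the control furnished by \cite[Lemma~3.5]{KuOp2} is uniform in the right way — that a single pair $(K,\beta)$ works simultaneously for all $\beta$-pseudo-orbits of all lengths $k\geq K$, and that the exceptional index set it produces really has relative density below $\eta/2$ rather than merely bounded cardinality. I would therefore read \cite[Lemma~3.5]{KuOp2} carefully to confirm it is stated in this uniform, density-based form (which the authors evidently intend, given how they phrase the conclusion of Lemma~\ref{lematB}), and then the bookkeeping with the $\eta/2$ budgets is routine.
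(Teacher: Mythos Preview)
Your proposal matches the paper's approach exactly: the paper offers no proof beyond the sentence preceding the lemma, which says to combine Corollary~\ref{cor:measure_cent} with \cite[Lemma~3.5]{KuOp2}, and that is precisely what you do.

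One caution on your sketch of the combination: you describe \cite[Lemma~3.5]{KuOp2} as furnishing a \emph{single} reference point $y$ such that $\dd(x_i,f^i(y))<\eta/2$ holds on a set of indices of relative cardinality exceeding $1-\eta/2$. That would amount to a form of average shadowing, which is not assumed here, so this is unlikely to be the content of the cited lemma. The mechanism that works without any shadowing hypothesis is a block decomposition: using uniform continuity of $f,f^2,\dots,f^{N-1}$ on the compact space $X$, choose $\beta>0$ so small that every $\beta$-pseudo-orbit of length $N$ is $\eta/2$-shadowed by the genuine orbit of its first point; then partition a long $\beta$-pseudo-orbit into consecutive blocks of length $N$ and apply Corollary~\ref{cor:measure_cent} (with $n=N$) to the initial point of each block. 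This yields many reference orbits rather than one, but your triangle-inequality bookkeeping with the two $\eta/2$ budgets goes through unchanged, and the short remainder block of length $<N$ is absorbed by taking $K$ large enough.
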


This lemma can be applied to obtain the following property.
\begin{lem}\label{lematC}
Let $f\colon X\ra X$ be a continuous map of a compact metric space and let $A\subset X$ be a closed invariant set
 containing the measure center of $f$.
Then for every $\eps>0$ there is $0<\delta<\eps$ such that for every $\delta$-average-pseudo-orbit $\set{x_n}_{n=0}^\infty$ there is
an $\eps$-average-pseudo-orbit $\set{y_n}_{n=0}^\infty\subset A$ such that
$$
\limsup_{n\ra \infty}\frac{1}{n}\#\set{0\leq i< n\colon\dd(x_i,y_i)\geq \eps} <\eps.
$$
\end{lem}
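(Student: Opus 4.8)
The plan is to replace each $x_i$ by a nearest point $y_i\in A$, and to argue that---since an average-pseudo-orbit spends asymptotically all of its time near the invariant set $A$---this substitution alters only a sparse set of coordinates while keeping the average consecutive error small. I would fix constants in the following order. Given $\eps$, first pick $\eta\in(0,\eps)$ small enough that, by uniform continuity of $f$ on the compact space $X$, $\dd(u,v)<\eta$ forces $\dd(f(u),f(v))<\eps/6$, and small enough for the density bookkeeping below. Feeding this $\eta$ into Lemma~\ref{lematB} yields $K$ and $\beta$ (we may freely shrink $\beta$, as any $\beta'$-pseudo-orbit with $\beta'\le\beta$ is still a $\beta$-pseudo-orbit). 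Only at the very end would I choose $\delta<\eps$, depending on $\beta$, $K$, and $\eps$, to be sufficiently small.

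The crux is a uniform (Banach) density bound on the far set $B_2=\set{i:\dd(x_i,A)\ge\eta}$. Directly from the definition of a $\delta$-average-pseudo-orbit, the jump-bad set $B_1=\set{i:\dd(f(x_i),x_{i+1})\ge\beta}$ has upper Banach density at most $\delta/\beta$, since for any window $W$ the inequality $\dd(f(x_i),x_{i+1})\ge\beta$ on $B_1$ gives $\beta\cdot\#(B_1\cap W)\le\delta\cdot\#W$. Hence, partitioning any window into blocks of length $K$, all but a $(\delta/\beta)K$-fraction of the blocks are disjoint from $B_1$ and therefore constitute a genuine $\beta$-pseudo-orbit of length $K$; Lemma~\ref{lematB} then forces each such block to contain at most $\eta K$ indices of $B_2$. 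Counting good blocks (at most $\eta K$ each) against bad blocks (at most $K$ each) bounds the upper Banach density of $B_2$ by approximately $\eta+(\delta/\beta)K$, which the final choice of $\delta$ drives below $\eps$.

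With $y_i\in A$ chosen as a nearest point to $x_i$ (which exists, $A$ being compact), we have $\dd(x_i,y_i)=\dd(x_i,A)$, so $\set{i:\dd(x_i,y_i)\ge\eps}\subseteq B_2$ and the required density estimate follows from the bound on $B_2$. To verify the average-pseudo-orbit condition I would split the indices: when neither $i$ nor $i+1$ lies in $B_2$, invariance $f(A)\subseteq A$, uniform continuity, and the triangle inequality give $\dd(f(y_i),y_{i+1})<\eps/6+\dd(f(x_i),x_{i+1})+\eta$, and averaging over a window contributes at most $\eps/6+\eta+\delta$, because the genuine jumps average below $\delta$; the remaining indices (those with $i\in B_2$ or $i+1\in B_2$) have density at most twice that of $B_2$ and contribute at most $2\diam(X)$ times the Banach density of $B_2$. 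Choosing $\eta$ and then $\delta$ small makes the sum less than $\eps$, uniformly in the shift $k$, so $\set{y_i}_{i=0}^\infty\subset A$ is the desired $\eps$-average-pseudo-orbit.

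I expect the main obstacle to be precisely this transfer step: Lemma~\ref{lematB} is stated only for honest finite $\beta$-pseudo-orbits, whereas the sequence at hand is merely an average-pseudo-orbit. The bridge is the observation that a small average error forces the sequence to be an honest $\beta$-pseudo-orbit on all but a sparse family of length-$K$ windows. The delicate point is that, because the definition of an average-pseudo-orbit quantifies over every shift $k$, all of the density bounds must be uniform (Banach) densities rather than ordinary densities along $\set{0,\dots,n-1}$.
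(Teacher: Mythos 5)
Your proposal is correct, but it builds the sequence $\set{y_n}$ in a genuinely different way from the paper. The paper does not project: each time $\set{x_i}$ comes $\eta$-close to some $a\in A$ it switches to the \emph{genuine orbit segment} $a, f(a),\ldots,f^{N-1}(a)$ (and fills the gaps between such events by continuing to iterate the previous point of $A$), so $\set{y_i}$ is a concatenation of true orbit pieces of length at least $N$; the $\eps$-average-pseudo-orbit property is then nearly free (at most one jump per $N$ steps, giving an average error below $3\diam(X)/N$), while the closeness of $y_i$ to $x_i$ on a large-density set requires an extra argument ($\eta$ is chosen so that a point $\eta$-close to the start of an $\eta$-pseudo-orbit of length $N$ $\eps$-shadows it). Your nearest-point projection reverses the trade-off: $\dd(x_i,y_i)=\dd(x_i,A)$ makes the density estimate immediate, but you must pay for the pseudo-orbit property of $\set{y_i}$ via uniform continuity together with a Banach-density bound on the far set $B_2$. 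Both routes hinge on Lemma~\ref{lematB}, and your block decomposition --- discarding the at most $\delta n/\beta$ length-$K$ blocks meeting the jump-bad set $B_1$ and applying Lemma~\ref{lematB} to the remaining honest $\beta$-pseudo-orbits --- is a cleaner and more explicit bridge from the average-pseudo-orbit hypothesis to Lemma~\ref{lematB} than the paper's proof, which invokes that lemma for $\set{x_i}$ rather tersely (asserting that $\frac1n\#\set{i<n:\dd(x_i,A)<\eta}\ge 1-\eta$ although $\set{x_i}$ is only a $\delta$-average-pseudo-orbit, not a $\beta$-pseudo-orbit). Your closing remark about needing uniformity in the shift $k$, hence upper Banach rather than ordinary density, is exactly the right caution, and your order of constants ($\eta$ from $\eps$ and $\diam X$, then $K,\beta$ from Lemma~\ref{lematB}, then $\delta$) is consistent.
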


\begin{proof}
Fix any $\eps>0$ and let $N\in\Zp$ be such that $3\diam(X)/N<\eps$. Use continuity of $f$ to obtain $\eta>0$ such that $\eta<\eps/2$ and every $\eta$-pseudo-orbit $z_0,\ldots,z_N$ is $\eps$-shadowed by any point $x\in X$ such that $\dd(x,z_0)<\eta$.
Let $K$ and $\beta$ be provided to $\eta$ by Lemma~\ref{lematB}. Without loss of generality we may assume that $N$ divides $K$.
Fix $\delta<\min\set{\eps\eta/2K,\beta,\eta}$ and any $\delta$-average pseudo-orbit $\set{x_i}_{i=0}^\infty$. Note that by the choice of $\beta$ the set $\set{j \colon \dd(x_j,A)<\eta}$ is infinite.

We are now ready to define $\set{y_i}_{i=0}^\infty$. We arbitrarily choose any $y_0\in A$, start with $i=0$, and perform infinitely the following procedure:
\begin{enumerate}
\item find the first $j>i$ such that $\dd(x_j,a)<\eta$ for some $a\in A$,
\item for all $i<p<j$ put $y_p=f^{p-i}(y_i)$,
\item for all $j\leq p<j+N$ put $y_p=f^{p-j}(a)$,
\item increase $i$ to $j+N-1$,
\item go back to step $(1)$.
\end{enumerate}

Observe that the sequence $\set{y_i}_{i=0}^\infty$ obtained in this way is a concatenation of fragments of orbits of points from $A$, and the fragments are (with the possible exception of the first one) of length at least $N$. It follows that for every $n\geq N$, where $n=sN+r$ for some $0\leq r<N$, and for every $k\geq 0$ we have
$$
\frac{1}{n}\sum_{i=0}^{n-1}\dd(f(y_{i+k}),y_{i+k+1})\leq\frac{(s+2)\diam(X)}{sN}\leq\frac{3\diam(X)}{N}<\eps,
$$
and therefore $\set{y_i}_{i=0}^\infty$ is an $\eps$-average pseudo-orbit in $A$.

Additionally, by the definition of $\eta$, the first $N$ points of every fragment (again with the possible exception of the first fragment) $\eps$-shadow the $N$ points in $\set{x_i}_{i=0}^\infty$ with corresponding indices. Let $l$ denote the length of the first fragment of $\set{y_i}_{i=0}^\infty$. It follows that for $n$ large enough we have
\begin{eqnarray*}
\frac{1}{n}\cdot\#\set{0\leq i< n\colon\dd(x_i,y_i)\geq \eps}&\leq&\frac{1}{n}(l+n-\#\set{0\leq i< n\colon\dd(x_i,A)<\eta})\\
&\leq& \frac{l}{n}+1-(1-\eta)<\frac{\eps}{4}+\frac{\eps}{2}=\frac{3}{4}\eps
\end{eqnarray*}
and the assertion follows.
\end{proof}

\begin{thm}\label{glowne1}
If a closed invariant set $A\subset X$ contains the measure center of a compact dynamical system $(X,f)$ and $f|_A$ has the average shadowing property on $A$, then so does $f$ on $X$.
\end{thm}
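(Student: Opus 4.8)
The plan is to reduce the statement about average shadowing on $X$ to the average shadowing property on the invariant set $A$, using Theorem~\ref{thm:normal_orb_asp} to replace average pseudo-orbits by genuine pseudo-orbits, and Lemma~\ref{lematC} to push an arbitrary average-pseudo-orbit of $f$ into one that lives (up to a set of small density) inside $A$. First I would note that since $A$ contains the measure center, it is nonempty, closed and invariant, and $f|_A$ is a compact dynamical system; moreover by Lemma~\ref{lem:surj-asp-implies-chain-mix} the average shadowing property of $f|_A$ together with surjectivity of $f|_A$ gives chain mixing of $f|_A$. The global map $f$ is also chain mixing: one can see this because any $\delta$-pseudo-orbit in $X$ can be corrected into one whose relevant portions lie near $A$ (via Lemma~\ref{lematB}), and then connected inside $A$ using chain mixing of $f|_A$. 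Having chain mixing of $f$ available, it suffices by Theorem~\ref{thm:normal_orb_asp} to verify condition (2) there: for every $\eps>0$ find $\delta>0$ so that every $\delta$-average-pseudo-orbit of $f$ is $\eps$-shadowed in average by some point of $X$.

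The main construction would go as follows. Given $\eps>0$, apply Lemma~\ref{lematC} to obtain $\delta>0$ such that every $\delta$-average-pseudo-orbit $\set{x_i}$ of $f$ admits an $\eps'$-average-pseudo-orbit $\set{y_i}\subset A$ (with $\eps'$ chosen much smaller than $\eps$) satisfying
\[
\limsup_{n\ra\infty}\frac{1}{n}\#\set{0\leq i<n : \dd(x_i,y_i)\geq \eps'}<\eps'.
\]
Now $\set{y_i}$ is an average-pseudo-orbit of $f|_A$ (since $A$ is invariant and the $y_i$ lie in $A$), so by the average shadowing property of $f|_A$ there is a point $z\in A$ that $\eps'$-shadows $\set{y_i}$ in average. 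The remaining task is to combine the two estimates: the true orbit of $z$ is close on average to $\set{y_i}$, and $\set{y_i}$ agrees with $\set{x_i}$ outside a set of upper density less than $\eps'$. Splitting the Cesàro sum $\frac{1}{n}\sum_{i=0}^{n-1}\dd(f^i(z),x_i)$ according to whether $\dd(x_i,y_i)<\eps'$ or not, and bounding the ``bad'' indices by $\diam(X)$ times their density, I would obtain
\[
\limsup_{n\ra\infty}\frac{1}{n}\sum_{i=0}^{n-1}\dd(f^i(z),x_i)\leq \eps'+\eps'+\diam(X)\cdot\eps',
\]
which is below $\eps$ once $\eps'$ is chosen small enough in terms of $\eps$ and $\diam(X)$.

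The delicate points are bookkeeping rather than conceptual. The first is ensuring that the average shadowing property of $f|_A$ is the correct hypothesis to invoke on $\set{y_i}$: one must check that $\set{y_i}$ genuinely satisfies the $\eps'$-average-pseudo-orbit condition \emph{within $A$} with its own constant $N$, which is exactly what Lemma~\ref{lematC} delivers by construction as a concatenation of orbit segments from $A$. The second is the transfer of chain mixing from $f|_A$ to $f$, needed before Theorem~\ref{thm:normal_orb_asp} can be applied to $f$ on $X$; this relies on Corollary~\ref{cor:measure_cent} (or Lemma~\ref{lematB}) to guarantee that pseudo-orbits in $X$ spend almost all of their time near $A$, so that the chain-mixing connections can be made inside $A$.

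I expect the main obstacle to be the careful choice of the hierarchy of constants—selecting $\eps'$ small relative to both $\eps$ and $\diam(X)$, then obtaining $\delta$ from Lemma~\ref{lematC} for this $\eps'$—so that after combining the averaged shadowing error of $z$ against $\set{y_i}$ with the small-density discrepancy between $\set{y_i}$ and $\set{x_i}$, the total $\limsup$ stays strictly below $\eps$. The conceptual content is entirely packaged in the preceding lemmas; the proof is essentially an exercise in chaining these estimates together while respecting the order of quantifiers in the definition of the average shadowing property.
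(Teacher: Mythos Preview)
Your core argument is exactly the paper's: apply Lemma~\ref{lematC} to replace a $\delta$-average-pseudo-orbit $\{x_i\}$ in $X$ by an $\eps'$-average-pseudo-orbit $\{y_i\}$ in $A$ that differs from $\{x_i\}$ on a set of upper density below $\eps'$, use the average shadowing of $f|_A$ to find $z\in A$ that $\eps'$-shadows $\{y_i\}$ in average, and then split the Ces\`aro sum to bound the error against $\{x_i\}$. This is correct and the bookkeeping you describe matches the paper's.

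However, the opening detour through chain mixing and Theorem~\ref{thm:normal_orb_asp} should be dropped entirely. First, you misstate condition~(2): it concerns $\delta$-\emph{pseudo-orbits}, not $\delta$-average-pseudo-orbits, so ``verifying condition~(2)'' as you phrase it is just the definition of average shadowing and gains nothing. Second, invoking Lemma~\ref{lem:surj-asp-implies-chain-mix} for $f|_A$ requires surjectivity of $f|_A$, which is not part of the hypotheses (and your sketch of transferring chain mixing to $f$ is vague). Third---and most tellingly---your ``main construction'' never actually uses the reduction: you start directly from a $\delta$-average-pseudo-orbit and proceed via Lemma~\ref{lematC}, exactly as the paper does. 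The paper's proof does not mention chain mixing or Theorem~\ref{thm:normal_orb_asp} at all; Lemma~\ref{lematC} is the only tool needed.
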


\begin{proof}
Without loss of generality we may assume that $\mbox{diam}(X)=1$. Fix any $\eps>0$. Let $\kappa<\eps/4$ be such that every $\kappa$-average pseudo-orbit in $A$ is $\eps/4$-shadowed in average by some point $x\in A$. Let $\delta$ be provided by Lemma~\ref{lematC} for $\kappa$. Take any $\delta$-average-pseudo-orbit $\set{x_n}_{n=0}^\infty$ and let $\set{y_n}_{n=0}^\infty$ be a $\kappa$-average-pseudo-orbit in $A$ obtained by application of Lemma~\ref{lematC}. Let $z\in A$ be a point which $\eps/4$-shadows it in average. Note that there is $M>0$ such that for every $n\geq M$ we have the inequalities
\begin{eqnarray*}
\frac{1}{n}\cdot\#\set{0\leq i< n :\dd(x_i,y_i)\geq \frac{\eps}{4}} &<& \frac{\eps}{4},\\
\frac{1}{n}\sum_{i=0}^{n-1}\dd(f^i(z),y_i)&<&\frac{\eps}{4}.
\end{eqnarray*}
Thus if we denote $\Gamma_n=\set{i<n\colon\dd(x_i,y_i)< \frac{\eps}{4}}$ we have
\begin{eqnarray*}
\frac{1}{n}\sum_{i=0}^{n-1} \dd(f^i(z),x_i)&\leq&\frac{n-\#\Gamma_n}{n}+\frac{1}{n}\sum_{i\in \Gamma_n}\left( \dd(f^i(z),y_i)+\dd(y_i,x_i)\right)\\
&\leq& \frac{\eps}{4}+\frac{1}{n}\sum_{i=0}^{n-1} \dd(f^i(z),y_i)+\frac{\eps}{4}\leq\frac{\eps}{4}+\frac{\eps}{4}+\frac{\eps}{4}.
\end{eqnarray*}
This proves that $\limsup_{n\ra \infty} \frac{1}{n}\sum_{i=0}^{n-1}\dd(f^i(z),x_i) \leq 3/4\cdot\eps<\eps$. Consequently, $\set{x_n}_{n=0}^\infty$ is $\eps$-shadowed in average by $z$ and the proof is completed.
\end{proof}

\section{Factors and equicontinuity}
We do not know (see Question \ref{q:factors}) if a factor of a system
with the (asymptotic) average shadowing property has to have this property.
If the answer is yes, then the following theorem follows from \cite[Corollary 3.8]{Niu}.

\begin{thm}\label{thm:equi_asp}
Suppose that a compact dynamical system $(X,f)$ has the average shadowing property. Then if $(Y,\dd')$ is a metric space and $(Y,g)$ is a maximal equicontinuous factor of $(X,f)$, then $Y$ is a singleton.
\end{thm}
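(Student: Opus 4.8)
\emph{The plan is to} argue by contradiction: assuming $Y$ is not a singleton, I would manufacture, for suitable $\eps,\delta>0$, a $\delta$-average-pseudo-orbit of $f$ that is \emph{not} $\eps$-shadowed in average by any point. The key structural remark is that although we do not know whether the average shadowing property passes to factors (Question~\ref{q:factors}), the shadowing of one \emph{concrete} orbit always does: if a point of $X$ shadows a sequence in average, then its image shadows the projected sequence in average. This lets me transfer the whole problem to the rigid factor $(Y,g)$, where it becomes transparent.

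First I would adapt the metric on $Y$. Since $g$ is equicontinuous, $D(a,b)=\sup_{n\ge 0}\dd'(g^n a,g^n b)$ is a metric uniformly equivalent to $\dd'$ for which $g$ is non-expansive; and because $g$ is the maximal equicontinuous factor map we may take it to be a surjective isometry of $(Y,D)$ (a surjective non-expansive self-map of a compact metric space is an isometry). The consequence I need is that $D(g^n a,g^n b)=D(a,b)$ for all $a,b\in Y$ and all $n\ge 0$. Fix $u\ne v$ in $Y$, set $c=D(u,v)>0$, and use surjectivity of the factor map $\pi\colon X\ra Y$ to pick $p,q\in X$ with $\pi(p)=u$, $\pi(q)=v$.

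Next comes the construction. Fix a large block length $L$ and define $\set{x_i}_{i=0}^\infty$ by $x_i=f^i(p)$ when $\lfloor i/L\rfloor$ is even and $x_i=f^i(q)$ when $\lfloor i/L\rfloor$ is odd. Inside each block this is a genuine orbit segment, so $\dd(f(x_i),x_{i+1})$ is nonzero only at the junctions $i\equiv L-1\pmod L$, where it is at most $\diam(X)$. Any window of length $n$ meets at most $n/L+1$ junctions, so choosing $L$ and $N$ large (depending on $\delta$) makes $\set{x_i}_{i=0}^\infty$ a $\delta$-average-pseudo-orbit. Its projection $\eta_i=\pi(x_i)$ equals $g^i(u)$ on the even blocks and $g^i(v)$ on the odd blocks.

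The heart of the matter is that $\eta$ cannot be shadowed in average in $Y$. For any $w\in Y$ the isometry property gives $D(g^i(w),g^i(u))=D(w,u)$ and $D(g^i(w),g^i(v))=D(w,v)$, so along $n=2mL$ the average $D$-distance of the orbit of $w$ from $\eta$ equals $\tfrac12\bigl(D(w,u)+D(w,v)\bigr)\ge \tfrac12 D(u,v)=c/2$ by the triangle inequality. Hence no orbit in $Y$ shadows $\eta$ in average with error below $c/2$. On the other hand, if some $z\in X$ were to $\eps$-shadow $\set{x_i}_{i=0}^\infty$ in average, then splitting the indices according to whether $\dd(f^i(z),x_i)$ is below a small threshold $\tau$ (bounding the rare large terms by $\diam(Y)$ via a Markov estimate and the rest by the modulus of continuity $\omega$ of $\pi$) shows that $\pi(z)$ shadows $\eta$ in average with error at most $\omega(\tau)+\diam(Y)\,\eps/\tau$. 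Choosing first $\tau$, then $\eps$ (and thereby $\delta$ from the average shadowing property) so that this quantity is below $c/2$ contradicts the previous sentence, completing the proof. The step I expect to be the genuine obstacle is justifying that $g$ may be taken to be an honest \emph{isometry} rather than merely non-expansive, since it is the resulting constancy of orbit distances that drives the lower bound $c/2$; the transfer estimate of the last paragraph is routine once the modulus of continuity of $\pi$ is recorded.
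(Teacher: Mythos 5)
Your proposal is correct and rests on the same basic construction as the paper's proof: both arguments take two points $p,q\in X$ lying over distinct points of $Y$, form the average pseudo-orbit that alternates long blocks of their orbits, verify the $\delta$-average condition by counting junctions per window, and derive a contradiction by projecting a shadowing point into $Y$. Where you genuinely differ is in how the contradiction is extracted. The paper stays with the metric $\dd'$ and uses two separate facts: an equicontinuous (surjective) system is distal, so $\liminf_n\dd'(g^n(u),g^n(v))=2\lambda>0$, and equicontinuity propagates closeness forward in time; it then only needs to locate one good time in a $p$-block and one later good time in a $q$-block (available because each family of blocks has upper density $1/2$), after which $\dd'(g^n\pi(p)),g^n\pi(q)))\leq 2\lambda/3$ for all large $n$, a contradiction. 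You instead pass to the adapted metric $D(a,b)=\sup_{n}\dd'(g^n(a),g^n(b))$, under which $g$ is an isometry, compute the exact average $D$-distance $\tfrac12\bigl(D(w,u)+D(w,v)\bigr)\geq c/2$ of any orbit in $Y$ from the projected pseudo-orbit, and transfer the average shadowing from $X$ to $Y$ by a Markov estimate; this is more quantitative and, to my mind, makes the rigidity mechanism more transparent. Concerning the step you flag as the potential obstacle: a surjective non-expansive self-map of a compact metric space is an isometry, so your isometry claim is standard \emph{provided} $g$ is surjective (which it is whenever $f$ is). You are not worse off than the published argument here, since the paper's assertion that equicontinuity implies distality also requires surjectivity (the two-point example $f(a)=f(b)=b$ from Section 2 is equicontinuous, proximal, and has the average shadowing property), so both proofs tacitly use the same hypothesis.
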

\begin{proof}
Let $\pi \colon X \to Y$ be a factor map onto a maximal equicontinuous factor. If $Y$ has at least two elements, then there are
$x,y\in X$ such that $\pi(x)\neq \pi(y)$. Since $(Y,g)$ is equicontinuous, it is distal, and there is a $\lambda>0$ such that
$\liminf_{n\to \infty} \dd'(g^n(\pi(x)),g^n(\pi(y)))=2\lambda>0$. By equicontinuity there is $\eps>0$ such that if $\dd(p,q)<\eps$, then $\dd(f^n(p),f^n(q))<\lambda/3$ for every $n$. Using compactness we can find a $\delta>0$ such that if $\dd(p,q)<\delta$ then $\dd'(\pi(p),\pi(q))<\eps$.
If we fix any $\eta>0$, then we can find a sufficiently large $N$ such that the sequence
$$
\xi=x,f(x),\ldots,f^{N-1}(x),f^N(y), f^{N+1}(y),\ldots, f^{2N-1}(y),f^{3N}(x),f^{3N+1}(x),\ldots, f^{4N-1}(x),f^{4N}(y),\ldots
$$
is an $\eta$-average pseudo-orbit.
In particular, for sufficiently small $\eta$ (and large $N$) there is a point $z$ which $\delta$-shadows it in average.
Note that the set of all indices $j$ such that $\xi_j=f^l(x)$ for some $l\ge 0$ 
has upper density at least $1/2$, and an analogous statement holds with $x$ replaced by $y$. Therefore there
are $0\leq i < j$ such that $\dd(f^i(z),f^i(x))<\delta$ and $\dd(f^j(z),f^j(y))<\delta$.
This implies that $\dd'(g^i(\pi(z)),g^i(\pi(x)))<\eps$ and $\dd'(g^j(\pi(z)),g^j(\pi(y)))<\eps$.
In particular, for all sufficiently large $n$, we have
$$
\lambda<\dd'(g^n(\pi(x)),g^n(\pi(y))) \leq \dd'(g^n(\pi(x)),g^n(\pi(z)))+\dd'(g^n(\pi(z)),g^n(\pi(y)))\leq 2\lambda/3<\lambda,
$$
which is a contradiction.
\end{proof}

Using the above theorem or the well known fact that every distal system has fully supported invariant measure and Theorem \ref{thm:measure+asp-imply-wm}
one obtains another proof of \cite[Corollary 3.8]{Niu} which says that the only compact, distal dynamical system with the average shadowing property is a trivial one-point system.

\section{Consequences of asymptotic shadowing}

Limit shadowing property was introduced by Eirola, Nevanlinna, and Pilyugin in \cite{Ei2}.
It is known that limit shadowing property is always present in a neighborhood of a hyperbolic set for a diffeomorphism $f$ of $\R^n$ (see \cite[Theorem 1.4.1]{Pil}). Limit shadowing property is an important notion with many possible applications (see \cite{Pil}).
It is also known that $f$ has shadowing property in a neigborhood of its hyperbolic set.
Our Theorem~\ref{limsh_chaintrans} below shows that such a situation is more general than it seems at the first sight.

\begin{defn}
A compact dynamical system $(X,f)$ has \emph{limit shadowing} if every asymptotic pseudo-orbit $\{x_i\}_{i=0}^\infty$ is \emph{asymptotically shadowed} by some point $z\in X$, that is
$$\lim_{n\rightarrow\infty}\dd(f^i(z),x_i)=0.$$

We say that the compact system $(X,f)$ has \emph{s-limit shadowing} if it has shadowing and for every $\eps>0$ there is $\delta>0$ such that every $\delta$-pseudo-orbit that is also an asymptotic pseudo-orbit is both $\eps$-shadowed and asymptotically shadowed by some point $z\in X$.
\end{defn}

\begin{rem}
If $f$ is surjective, then s-limit shadowing clearly implies limit shadowing. There are, however, examples
of dynamical systems with shadowing and limit shadowing but without s-limit shadowing (see for instance \cite[Example 3.5]{BGO}).
\end{rem}

\begin{thm}\label{limsh_chaintrans}
Assume that a compact dynamical system $(X,f)$ is chain transitive. If $(X,f)$ has limit shadowing then it also has shadowing.
\end{thm}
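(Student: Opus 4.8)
The plan is to argue by contradiction, exploiting chain transitivity to splice together a sequence of ``badly shadowable'' finite pseudo-orbits into a single asymptotic pseudo-orbit, much in the spirit of the concatenation used in the proof of Theorem~\ref{thm:aasp-implies-asp}. Suppose $(X,f)$ fails to have shadowing. By a routine compactness argument (on a compact space the finite and infinite versions of the shadowing property coincide: if every finite initial segment of an infinite $\delta$-pseudo-orbit were $\tfrac{\eps}{2}$-shadowable, the nested nonempty closed sets $\{z:\dd(f^i(z),x_i)\le\tfrac{\eps}{2},\ 0\le i\le N\}$ would have nonempty intersection), there is an $\eps>0$ and, for every $n\in\Zp$, a \emph{finite} $\tfrac1n$-pseudo-orbit $w^{(n)}=(w^{(n)}_0,\ldots,w^{(n)}_{k_n})$ that is not $\eps$-shadowed by any point of $X$; that is, for every $y\in X$ there is some $0\le j\le k_n$ with $\dd(f^j(y),w^{(n)}_j)\ge\eps$.

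Next I would build a single infinite asymptotic pseudo-orbit $\zeta$ out of these blocks. Using chain transitivity, for each $n$ I can choose a finite $\tfrac1n$-pseudo-orbit $c^{(n)}$ from the last point $w^{(n)}_{k_n}$ of the $n$-th block to the first point $w^{(n+1)}_0$ of the next block, and then define $\zeta$ to be the concatenation $w^{(1)}c^{(1)}w^{(2)}c^{(2)}\cdots w^{(n)}c^{(n)}\cdots$. Every error $\dd(f(\zeta_i),\zeta_{i+1})$ occurring at an index lying inside the $n$-th block or the $n$-th connector is bounded by $\tfrac1n$, so $\dd(f(\zeta_i),\zeta_{i+1})\to0$ as $i\to\infty$; thus $\zeta$ is an asymptotic pseudo-orbit.

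By the limit shadowing property there is a point $z\in X$ with $\dd(f^i(z),\zeta_i)\to0$. Let $p_n$ denote the index in $\zeta$ at which the block $w^{(n)}$ begins, so that $\zeta_{p_n+j}=w^{(n)}_j$ for $0\le j\le k_n$. Choosing $n$ large enough that $\dd(f^i(z),\zeta_i)<\eps$ for all $i\ge p_n$, the point $y=f^{p_n}(z)$ satisfies $\dd(f^j(y),w^{(n)}_j)=\dd(f^{p_n+j}(z),\zeta_{p_n+j})<\eps$ for every $0\le j\le k_n$. Hence $y$ $\eps$-shadows $w^{(n)}$, contradicting the choice of $w^{(n)}$. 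This contradiction shows that $(X,f)$ has shadowing.

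The conceptual heart of the argument, and the step I expect to require the most care, is the splicing: chain transitivity is exactly what allows the mutually unrelated finite pseudo-orbits to be joined by connecting pseudo-orbits whose errors can be made arbitrarily small, so that the whole concatenation becomes asymptotic. Once this is in place, limit shadowing is forced to shadow \emph{every} block eventually, which is incompatible with the blocks being individually un-shadowable. The remaining ingredients, namely the reduction from infinite to finite bad pseudo-orbits and the bookkeeping verifying that the junction errors decay, are routine.
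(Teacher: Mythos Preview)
Your argument is correct and essentially identical to the paper's own proof: both negate shadowing to obtain finite $\tfrac1n$-pseudo-orbits that cannot be $\eps$-shadowed, use chain transitivity to splice them together with $\tfrac1n$-connectors into an asymptotic pseudo-orbit, and then observe that an asymptotic tracing point eventually $\eps$-shadows every block, yielding the contradiction. Your write-up is in fact slightly more explicit than the paper's, since you spell out the compactness reduction from infinite to finite bad pseudo-orbits.
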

\begin{proof}
Suppose on the contrary that $(X,f)$ does not have shadowing. Then there is $\eps>0$ such that for any $n>0$ there is a finite $\frac{1}{n}$-pseudo-orbit $\alpha_n$ which cannot be $\eps$-shadowed by any
point in $X$. Using chain transitivity, for every $n$ there exists a $\frac{1}{n}$-pseudo-orbit $\beta_n$ such that the sequence
$\alpha_n\beta_n\alpha_{n+1}$ forms a $\frac{1}{n}$-pseudo-orbit. Then the infinite concatenation
$$
\alpha_1 \beta_1 \alpha_2 \beta_2 \alpha_3 \beta_3 \ldots
$$
is an asymptotic pseudo orbit, and therefore, by limit shadowing, it is asymptotically shadowed by some point $z\in X$. Asymptotic shadowing is also, starting at some point, $\eps$-shadowing, so this would mean that almost all $\alpha_n$ are $\eps$-shadowed by some point of the form $f^{i_n}(z)$, which is a contradiction.
\end{proof}

\begin{rem}
It is easy to verify that if $(X,f)$ is chain transitive and has shadowing or limit shadowing then it is transitive.
\end{rem}

By the above facts, we can extend results of \cite{LiSakai} to a complete characterization of shadowing in expansive systems.
\begin{cor}
Let $(X,f)$ be a compact, c-expansive, and transitive dynamical system. Then the following conditions are equivalent:
\begin{enumerate}
\item $(X,f)$ has shadowing,
\item $(X,f)$ has limit shadowing,
\item $(X,f)$ has s-limit shadowing.
\end{enumerate}
\end{cor}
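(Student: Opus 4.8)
The plan is to establish the cyclic chain of implications $(1)\Rightarrow(3)\Rightarrow(2)\Rightarrow(1)$, so that all three shadowing notions coincide. The engine of the argument is Theorem~\ref{limsh_chaintrans} together with the result of Lee and Sakai \cite{LiSakai}; the role of the corollary is mostly to verify that the hypothesis of transitivity supplies exactly the two auxiliary properties, surjectivity and chain transitivity, that these results require.

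First I would record two elementary consequences of transitivity on a compact space. On the one hand, $f$ must be surjective: the image $f(X)$ is compact, hence closed, and were it proper the nonempty open set $U=X\setminus f(X)$ would satisfy $f^n(U)\subset f(X)$ for every $n\ge 1$, so that $f^n(U)\cap U=\emptyset$, contradicting transitivity applied to the pair $(U,U)$. On the other hand, transitivity trivially implies chain transitivity. Thus throughout we may freely use both that $f$ is onto and that $(X,f)$ is chain transitive.

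Then I would check the three arrows in turn. For $(1)\Rightarrow(3)$ I would invoke the theorem of Lee and Sakai \cite{LiSakai}, according to which a c-expansive system with shadowing has s-limit shadowing; this is the only place where c-expansivity enters. The implication $(3)\Rightarrow(2)$ is immediate from the Remark following the definition of s-limit shadowing, since s-limit shadowing implies limit shadowing for surjective maps and $f$ has already been shown to be surjective. Finally $(2)\Rightarrow(1)$ is precisely Theorem~\ref{limsh_chaintrans}, applicable because $(X,f)$ is chain transitive. Closing the cycle yields the equivalence.

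I do not anticipate a genuinely hard step, since the substantive content has been isolated in Theorem~\ref{limsh_chaintrans} and imported from \cite{LiSakai}. The only point demanding care is the bookkeeping of hypotheses: one must confirm that transitivity alone delivers both surjectivity (needed to pass from s-limit shadowing to limit shadowing) and chain transitivity (needed for Theorem~\ref{limsh_chaintrans}), and that c-expansivity is used solely through the Lee and Sakai implication $(1)\Rightarrow(3)$.
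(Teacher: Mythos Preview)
Your proof is correct and follows essentially the same approach the paper intends: the corollary is stated without an explicit proof, merely pointing to ``the above facts'' (Theorem~\ref{limsh_chaintrans} and the Remark) together with \cite{LiSakai}, which is exactly the cycle $(1)\Rightarrow(3)\Rightarrow(2)\Rightarrow(1)$ you wrote out. Your added verification that transitivity yields surjectivity and chain transitivity is a welcome clarification of hypotheses that the paper leaves implicit.
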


\section{Examples}\label{sec:examples}

In this section we present some examples of systems with the almost specification property (average and asymptotic average shadowing properties). They show that there is no clear relation between these properties and standard notions of recurrence such as transitivity, mixing, etc.

\begin{exmp}\label{ex:KuOp}
It was proved in \cite[Example 3.13]{KuOp2} that there exist non-transitive systems with the asymptotic average shadowing property.
The same reasoning as in \cite{KuOp2}, but with Theorem \ref{thm:almostspec_measure} in place of \cite[Theorem 3.11]{KuOp2} shows that dynamical systems described in \cite[Example 3.13]{KuOp2} have the almost specification property.
By Theorem~\ref{thm:aasp-implies-asp} this implies that there also exist non-transitive systems with the  average shadowing property.
\end{exmp}

Motivated by the Example \ref{ex:KuOp} and with the help of Lemma \ref{lem:unique_erg+prox-imply-almost} we are able to provide numerous examples of proximal systems with different degrees of transitivity. All these systems are proximal and uniquely ergodic.
Moreover, all these systems have the almost specification property (hence average and asymptotic average shadowing properties). This shows, in contrast to Theorem~\ref{thm:summary}, that neither of these properties is strong enough to induce any form of transitivity when only weaker kind of transitivity is assumed and the measure center is a proper subsystem (e.g., weak mixing and the average shadowing property are not sufficient to induce mixing, etc.).

\begin{lem}\label{lem:unique_erg+prox-imply-almost}
Every uniquely ergodic and proximal compact dynamical system has the almost specification property.
\end{lem}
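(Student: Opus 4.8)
The plan is to reduce everything to Theorem~\ref{thm:almostspec_measure}, which says that if a closed invariant set $A$ contains the measure center and $f|_A$ has the almost specification property, then $f$ has it on all of $X$. The trivial one-point system has the almost specification property (take the identically zero mistake function and any $k_g$), so it suffices to prove that the measure center of $(X,f)$ is a single fixed point $\{e\}$. Then one may put $A=\{e\}$ and invoke Theorem~\ref{thm:almostspec_measure}.

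First I would exploit unique ergodicity. Let $\mu$ be the unique invariant measure. If I can produce even one fixed point $e\in X$, then $\delta_e$ is an invariant measure, so unique ergodicity forces $\mu=\delta_e$. Consequently the only support of an invariant measure is $\{e\}$, and the measure center equals $\{e\}$. Thus the entire problem collapses to the purely topological claim that a proximal compact system has a fixed point.

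To produce the fixed point I would use the enveloping semigroup $E(X)=\overline{\{f^n:n\ge 0\}}\subset X^X$ together with the idempotent machinery underlying the Auslander--Ellis theorem \cite[Theorem 8.7]{Fur}. Pick a minimal idempotent $u\in E(X)$, lying in a minimal left ideal $I$. The structural fact I would use is that distinct points of the image $uX=\{x:ux=x\}$ are \emph{distal}: if $a=ua$, $b=ub$ and $(a,b)$ were proximal, some $p\in I$ satisfies $pa=pb$, and using $Ip=I$ one finds $q\in I$ with $qp=u$, whence $a=ua=qpa=qpb=ub=b$. Since $(X,f)$ is proximal it admits no distinct distal pair, so $uX$ is a single point $\{e\}$. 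Finally $f$ commutes with $u$, because from any net $f^{n_i}\to u$ and the continuity of $f$ one gets $f(u(x))=\lim f^{n_i+1}(x)=u(f(x))$; hence $f(e)=f(ue)=(fu)(e)=(uf)(e)=u(f(e))\in uX=\{e\}$, i.e.\ $e$ is fixed.

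The main obstacle is precisely this last point. Proximality is only a $\liminf$ condition, so one cannot hope to control the time averages of $\dd(f^n(x),f^n(y))$ directly and thereby collapse the support of $\mu$ by a measure-theoretic computation; the enveloping-semigroup argument is what sidesteps this, trading averages for the idempotent structure. Once the fixed point $e$ is secured, the remaining steps---identifying $\mu=\delta_e$, recognizing $\{e\}$ as the measure center, and applying Theorem~\ref{thm:almostspec_measure} with $A=\{e\}$---are routine.
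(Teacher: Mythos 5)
Your proof is correct and follows essentially the same route as the paper's: proximality yields a fixed point $e$, unique ergodicity then forces the unique invariant measure to be $\delta_e$ so that the measure center is $\{e\}$, and Theorem~\ref{thm:almostspec_measure} applied to $A=\{e\}$ finishes. The only difference is that the paper simply asserts the standard fact that a compact proximal system has a fixed point, whereas you supply the enveloping-semigroup/minimal-idempotent proof of it.
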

\begin{proof} Unique ergodicity means that there exists only one invariant measure. Proximality implies that
the system has a fixed point. The atomic measure concentrated on a fixed point is an invariant measure. Therefore the measure center of uniquely ergodic and proximal compact dynamical system is a singleton. It contains only one point --- the unique fixed point of the system. Since the trivial one-point system has the almost specification property we may apply  Theorem \ref{thm:almostspec_measure} to complete the proof.
\end{proof}

\begin{exmp}
Let $(\Sigma,\sigma)$ be the subshift of the full shift on symbols $\set{0,1}$ such that $x\in \Sigma$ if and only if $\#(\set{j : x_j=1}\cap [i,i+2^n])\leq n$ for every $i,n>0$. By \cite[Theorem 7.1]{KwSpacing}, $(\Sigma,\sigma)$ is uniquely ergodic, proximal, and has entropy zero. In particular, it does not have the specification property.
\end{exmp}

\begin{rem}
There is also a large class of dynamical systems with almost specification property (but without specification).
In particular it contains all $\beta$-shifts (e.g. see \cite{T}).
\end{rem}

\begin{exmp} Let $P\subset\N$ be any thick set with thick complement. Let $(\Omega_P,\sigma_P)$ denote the spacing shift on two symbols generated by $P$ (see \cite{spacing}, or \cite{KwSpacing,KwOpETDS} for definition). It was proved in \cite{KwSpacing} that $\Omega_P$ is uniquely ergodic (with the unique ergodic measure concentrated on $0^\infty$), and hence proximal exactly when the entropy of $\sigma_P$ is zero. By the results of \cite{spacing}, if $P$ has thick complement then the entropy of $\sigma_P$ is zero. Therefore $(\Omega_P,\sigma_P)$ is a weakly mixing, uniquely ergodic, and proximal dynamical system which is not mixing.
\end{exmp}

\begin{exmp}
In \cite{spacing} and \cite{KwOpETDS} the authors constructed $P$ such that $(\Omega_P,\sigma_P)$ is totally transitive, but not weakly mixing. In their example $P$ has thick complement, therefore the system is proximal and uniquely ergodic.
\end{exmp}

\begin{exmp} Let $P\subset\N$ be any thick set with thick complement. Define $Y = \Omega_P \times\{-1,1\}$. Let $g\colon Y\to Y$ be defined by $g(x,i)=(\sigma_P(x),-i)$. Let $X$ be obtained from $Y$ by collapsing $\set{(0^\infty,-1),(0^\infty,1)}$ to a single point $p$ and let $(X,f)$ be obtained from $(Y,g)$ by the natural projection. Then $f$ is transitive, but not totally transitive. It is also easy to see that the only invariant ergodic measure for $(X,f)$ must be concentrated on $p$, and therefore $(X,f)$ is uniquely ergodic and proximal.
\end{exmp}

\section{The noncompact case}
The next two examples illustrate that without the assumption of compactness of $X$ there is no implication in either direction between the average and the asymptotic average shadowing properties.

\begin{exmp}\label{example1}
Let $p$, $a_0$ and $b_0$ be vertices of an equilateral triangle with side $1$. For $n\in\{-1,-2,\ldots\}$ define inductively $a_n$ to be the middle of the line segment $\overline{pa_{n+1}}$ and $b_n$ to be the middle of the line segment $\overline{pb_{n+1}}$. For every $n\in\Zp$ choose $a_n$ and $b_n$ so that $\{a_n\}_{n\in\Z}$ and $\{b_n\}_{n\in\Z}$ are two disjoint sequences and $p$ is not a term in either of them. Define $X_1=\{p\}\cup\bigcup_{n\in\Z}\{a_n,b_n\}$. We define a metric $\dd_1$ on $X_1$ in the following way:
\begin{enumerate}[(i)]
\item\label{e1:c1} if $x,y\in\{p\}\cup\bigcup_{n\in\{0,-1,\ldots\}}\{a_n,b_n\}$ then $\dd_1(x,y)$ is the Euclidean distance between $x$ and $y$ in the triangle,
\item\label{e1:c2} for every $n\in\N$ we put $\dd_1(a_n,a_{n+1})=\dd_1(b_n,b_{n+1})=2^n$,
\item\label{e1:c3} for every $n\in\Zp$ we put $\dd_1(a_n,b_n)=\sum_{i=1}^n1/i$,
\item\label{e1:c4} for every pair of points $x,y\in X_1$ for which $\dd_1(x,y)$ has not been defined in steps \eqref{e1:c1}--\eqref{e1:c3} we put
$$\begin{array}{rcl}\dd_1(x,y)&=&\inf\left\{\sum_{i=0}^n\dd_1(c_i,c_{i+1})\:|\:n\in\N, c_0,\ldots,c_{n+1}\in X_1, c_0=x, c_{n+1}=y,\right.\\&&\left.\dd_1(c_0,c_1),\ldots,\dd_1(c_n,c_{n+1})\mbox{ have been defined in steps \eqref{e1:c1}--\eqref{e1:c3}}\right\}.\end{array}$$
\end{enumerate}
It is elementary to verify that $\dd_1$ is a metric. Finally, define $f_1\colon X_1\to X_1$ by setting $f_1(p)=p$ and by putting $f_1(a_n)=a_{n+1}$ and $f_1(b_n)=b_{n+1}$ for every $n\in\Z$. It can be easily verified that $f_1$ is continuous.
\end{exmp}

\begin{figure}
\includegraphics[width=82mm]{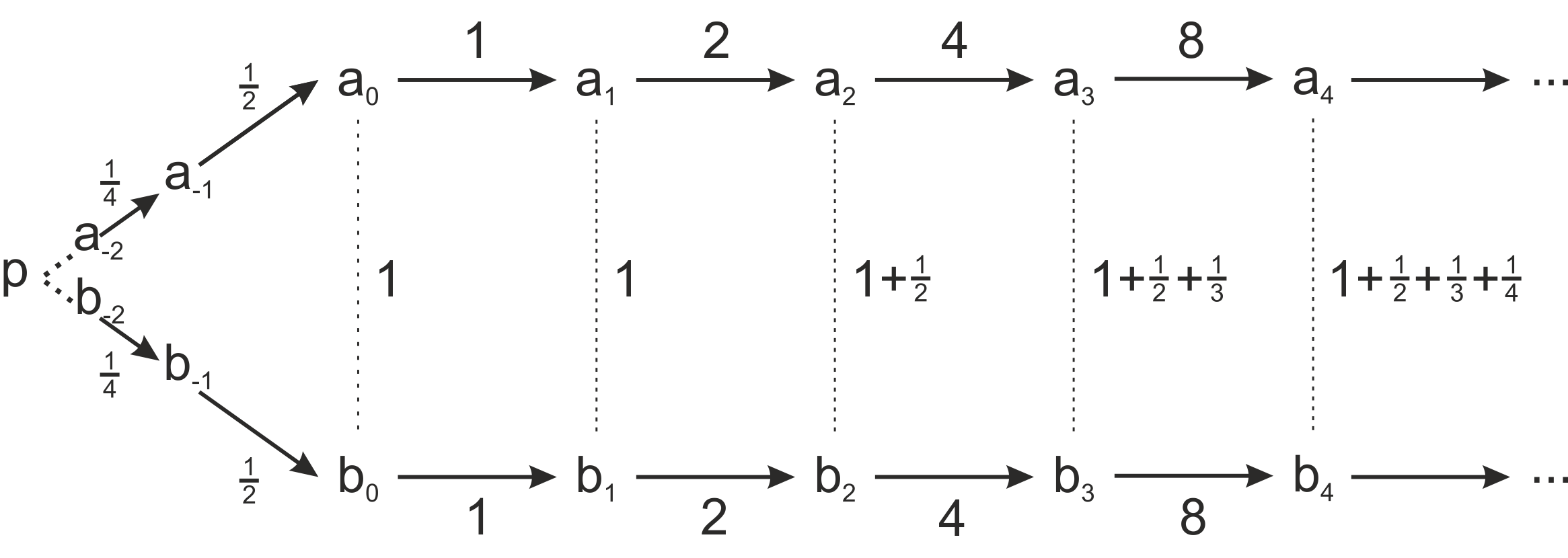}\caption{The space $X_1$ and the map $f_1$}
\end{figure}

\begin{thm}
The map $f_1$ has the average shadowing property, but not the asymptotic average shadowing property.
\end{thm}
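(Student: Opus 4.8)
The plan is to prove the two halves by entirely different arguments, both resting on a single cost table for $(X_1,f_1)$: the only forward orbits are the fixed point $p$ and the one-branch orbits $a_k,a_{k+1},\dots$ and $b_k,b_{k+1},\dots$; one has $\dd_1(a_n,p)=\dd_1(b_n,p)=2^n$; advancing one level along a branch costs $0$; switching branches at level $n$ costs $\sum_{i=1}^{n+1}1/i=O(\log n)$; and every other move out of level $n$ (staying, descending, or jumping up more than one level) costs at least $2^n$. The harmonic growth of the switching cost against the exponential growth $2^n$ of every other deviation is what drives both directions.

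\emph{Failure of asymptotic average shadowing.} First I would exhibit an explicit asymptotic average pseudo-orbit that no point shadows in average. Take $x_i\in\{a_i,b_i\}$, advancing one level at every step, with the branch held constant on blocks $[T_k,T_{k+1})$ that alternate between $a$ and $b$, where $T_{k+1}\ge 2T_k$. The only nonzero errors $\dd_1(f_1(x_i),x_{i+1})$ occur at the switching times $T_k$ and equal $\sum_{i=1}^{T_k+1}1/i=O(\log T_k)$; since at most $\log_2 n$ of them lie below $n$, their Cesàro average tends to $0$ and $\{x_i\}$ is an asymptotic average pseudo-orbit. A candidate shadowing point $z$ has orbit $p$, or $a_{m+i}$, or $b_{m+i}$; if the orbit is $p$ or has a nonzero level offset $m$, the average distance to $\{x_i\}$ blows up exponentially, while if $z=a_0$ (resp. $b_0$) then on every $b$-block (resp. $a$-block) the error is $\dd_1(a_i,b_i)=\sum_{j=1}^i 1/j$, so evaluating at $n=T_{k+1}$ of the appropriate parity forces the average above $\tfrac12\log T_k\to\infty$. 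Hence no $z$ shadows $\{x_i\}$ in average.

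\emph{Average shadowing.} The key remark is that applying the defining inequality of a $\delta$-average-pseudo-orbit to the window of length $N$ beginning at $i$ forces every individual error to satisfy $\dd_1(f_1(x_i),x_{i+1})<\delta N$. By the cost table, once the level exceeds $\log_2(\delta N)$ the only affordable move is to advance one level along the current branch (a branch switch remaining possible only while the level is below about $\exp(\delta N)$). This gives a dichotomy. In case (A) the orbit reaches some level above $\log_2(\delta N)$; then it is forced to climb to infinity and, above level $\approx\exp(\delta N)$, is locked on a single branch, so its tail coincides with a genuine orbit $a_{m+i}$ (or $b_{m+i}$). Shadowing $\{x_i\}$ with that branch orbit makes the error vanish on the tail and bounded on the finite initial transient, so the Cesàro average of the error tends to $0<\eps$. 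In case (B) the orbit stays at levels $\le\log_2(\delta N)$ forever, and I would shadow it with $p$, for which the error at step $i$ equals $2^{\ell_i}$, where $\ell_i$ is the level of $x_i$.

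It remains to bound $\tfrac1n\sum_{i<n}2^{\ell_i}$ in case (B), which is the heart of the matter. I would split time into maximal ascending runs (where the level increases by one) separated by non-advancing steps: along a run ending at level $\ell$ the geometric sum $\sum 2^{\ell_i}$ is at most $2^{\ell+1}$, while the non-advancing step that terminates the run costs at least $2^{\ell}$, so charging each run to that step yields $\sum_{i<n}2^{\ell_i}\le 2\sum_{i<n}\dd_1(f_1(x_i),x_{i+1})+O(\delta N)<2\delta n+O(\delta N)$, using the $\delta$-average bound on the window $[0,n)$. Dividing by $n$ and letting $n\to\infty$ gives average error at most $2\delta$, so taking $\delta\le\eps/2$ finishes the proof. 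The main obstacle is exactly this charging estimate: one must see that the exponential cost of leaving a high level makes the time spent there summable against the small total jump budget. This is precisely what separates average shadowing from its asymptotic version, where unbounded but sparse branch switches are permitted and, as the counterexample shows, destroy shadowing.
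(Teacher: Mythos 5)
Your proposal is correct and follows essentially the same route as the paper: the same alternating-block pseudo-orbit with harmonic switching costs defeats asymptotic average shadowing, and the same bounded/unbounded dichotomy with the geometric charging of ascending runs against the exponential cost of their terminating steps yields average shadowing (the paper's Case 1 computation with the indices $r_i$ is exactly your run-charging estimate). Your write-up is if anything slightly more explicit about why an unbounded $\delta$-average-pseudo-orbit is eventually locked onto a single branch.
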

\begin{proof}
\emph{Average shadowing property.} Given $\varepsilon>0$ set $\delta<\varepsilon/3$. For every $\delta$-average pseudo-orbit $\{z_i\}_{i=0}^\infty$ with a constant $N$ as in Definition~\ref{def:aver_po}, there are two possibilities:

\noindent \emph{Case 1.} The sequence $\{z_i\}_{i=0}^\infty$ is bounded. Consider all the indices $r_i$ where $\dd_1(p,z_{r_i+1})\leq \dd_1(p,z_{r_i})$. As the sequence $\{\dd_1(p,z_i)\}_{i=0}^\infty$ is bounded and takes values from the set $\{2^i\colon i\in\Z\}$ these indices can be written as an infinite increasing sequence $\{r_i\}_{i=0}^\infty$. Put $r_{-1}=-1$. Observe that the structure of $X_1$ guarantees that for every $i\in\N$ we have
$$
\sum_{k=r_{i-1}+1}^{r_i}\dd_1(p,z_k)\leq\dd_1(p,f_1(z_{r_i})) \;\leq\; 2\dd_1(f_1(z_{r_i}),z_{r_i})\leq 2\dd_1(f_1(z_{r_i}),z_{r_{i+1}})
$$
which implies that for every $n\in\N$ we have
$$\frac{1}{r_n+1}\sum_{i=0}^{r_n}\dd_1(p,z_i)\leq\frac{2}{r_n+1}\sum_{i=0}^{r_n}\dd_1(f_1(z_i),z_{i+1}).$$
Note that since sequence $\{z_i\}_{i=0}^\infty$ is bounded, the sums $\sum_{k=r_{i-1}+1}^{r_i}\dd_1(p,z_k)$ are also bounded by some constant $R>0$. Take $n\geq N$ large enough for the inequality $R/(n+1)\leq\delta$ to hold and such that $n>r_0$. Let $r_j$ be the largest term of the sequence $\{r_i\}_{i=0}^\infty$ smaller than $n$. Finally observe that
\begin{eqnarray*}
\frac{1}{n+1}\sum_{i=0}^n\dd_1(p,z_i)&=&\frac{1}{n+1}\sum_{i=0}^{r_j}\dd_1(p,z_i)+\frac{1}{n+1}\sum_{i=r_j+1}^n\dd_1(p,z_i)\\
&\leq&\frac{2}{n+1}\sum_{i=0}^n\dd_1(f_1(z_i),z_{i+1})+\frac{R}{n+1}\leq 3\delta<\varepsilon
\end{eqnarray*}
which proves $\varepsilon$-shadowing in average by $p$.

\noindent \emph{Case 2.} The pseudo-orbit $\{z_i\}_{i=0}^\infty$ is not bounded. Notice that it follows straight from the definition of the $\delta$-average-pseudo-orbit that the maximum possible error $\dd_1(f(z_i),z_{i+1})$ is bounded by $\delta N$. Therefore, once the pseudo-orbit strays far enough from $p$, it will be unable to deviate from the trajectory of the point it landed on and will be $\varepsilon$-shadowed in average by the orbit of that point.

\medbreak\noindent \emph{Asymptotic average shadowing property.} Define the sequence $\{z_i\}_{i=0}^\infty$ by
$$z_i=\left\{\begin{array}{cl}
b_i&\mbox{when } i\in\{0,1\},\\
a_i&\mbox{when } i\in(2^{2k},2^{2k+1}]\mbox{ for some }k\in\N,\\
b_i&\mbox{when } i\in(2^{2k+1},2^{2k+2}]\mbox{ for some }k\in\N.\\
\end{array}\right.$$
Observe that
\begin{eqnarray*}
\frac{1}{n}\sum_{i=0}^n\dd_1(f_1(z_i),z_{i+1})&=&\frac{1}{n}\sum_{i\in\{1,2,4,\ldots\}\cap[0,n]}\dd_1(f_1(z_i),z_{i+1})\\
&\leq&\frac{1}{n}(1+\log_2n)\sum_{i=1}^{n+1}\frac{1}{i}\\
&\leq&\frac{1+\log_2n}{\sqrt{n}}\sum_{i=1}^{n+1}\frac{1}{i\sqrt{i}}.
\end{eqnarray*}
Observe that this last expression tends to $0$ when $n\rightarrow+\infty$, proving that $\{z_i\}_{i=0}^\infty$ is an asymptotic average pseudo-orbit of $f_1$.

This pseudo-orbit may not be asymptotically shadowed in average by any point other than $a_0$ or $b_0$, as it maintains a distance bounded from below by $1/2$ from the iterations of any point but these two. To exclude the possibility of shadowing by $a_0$ note that for $n$ large enough we have $\frac{1}{2^{2n}}\sum_{i=0}^{2^{2n}}\dd_1(f_1^i(a_i),z_i)\geq\frac{1}{2^{2n}}2^{2n-1}=1/2$. The shadowing by $b_0$ is handled in an analogous way, proving that $f_1$ does not have the asymptotic average shadowing property.
\end{proof}

\begin{exmp}\label{example2}
First we define a family of pairwise disjoint metric spaces $\{(A_{i,j},\dd_{i,j})\}_{i\in\Z,j\in\N}$ in the following way:
\begin{enumerate}
\item\label{e2:ci} for every $i\in\Z_-$ and $j\in\N$ we put $A_{i,j}=\{a_{i,j}\}$,
\item\label{e2:cii} for every $j\in\N$ we put $A_{0,j}=\{a_{0,j},b_{0,j}\}$ and $\dd_{0,j}(a_{0,j},b_{0,j})=1/2^j$,
\item\label{e2:ciii} for every $i\in\Zp$ and $j\in\N$ we put $A_{i,j}=\{a_{i,j},b_{i,j},c_{i,j}\}$, $\dd_{i,j}(a_{i,j},b_{i,j})=1/2^j$, $\dd_{i,j}(b_{i,j},c_{i,j})=1-1/2^j$, and $\dd_{i,j}(a_{i,j},c_{i,j})=1$.
\end{enumerate}
Next we put $X_2=\bigcup_{i\in\Z,j\in\N}A_{i,j}$ and define a metric $\dd_2$ on $X_2$ in the following way:
\begin{enumerate}[(i)]
\item\label{e2:c0} for every $i\in\Z$, $j\in\N$, $x,y\in A_{i,j}$ we put $\dd_2(x,y)=\dd_{i,j}(x,y)$,
\item\label{e2:c1} for every $i,j\in\N$, $x\in A_{i,j}$ and $y\in A_{i+1,j}$ we put $\dd_2(x,y)=2^i$,
\item\label{e2:c2} for every $i\in\Z_-$, $j\in\N$, $x\in A_{i,j}$ and $y\in A_{i+1,j}$ we put $\dd_2(x,y)=2^{-i-1}$,
\item\label{e2:c3} for every $i\in\Z$, $j,k\in\N$, $x\in A_{i,j}$ and $y\in A_{i,k}$ we put $\dd_2(x,y)=2^{|i|}$,
\item\label{e2:c4} for every pair of points $x,y\in X_2$ for which $\dd_2(x,y)$ has not been defined in steps \eqref{e2:c0}--\eqref{e2:c3} we put
$$\begin{array}{rcl}\dd_2(x,y)&=&\inf\left\{\sum_{i=0}^n\dd_2(c_i,c_{i+1})\:|\:n\in\N, c_0,\ldots,c_{n+1}\in X_2, c_0=x, c_{n+1}=y,\right.\\&&\left.\dd_2(c_0,c_1),\ldots,\dd_2(c_n,c_{n+1})\mbox{ have been defined in steps \eqref{e2:c0}--\eqref{e2:c3}}\right\}.\end{array}$$
\end{enumerate}
Once again, it is elementary to check that $\dd_2$ is a metric. Finally, define $f_2\colon X_2\to X_2$ as follows:
\begin{enumerate}
\item for every $i\in\Z$ and $j\in\N$ put $f_2(a_{i,j})=a_{i+1,j}$,
\item for every $i,j\in\N$ put $f_2(b_{i,j})=c_{i+1,j}$,
\item for every $i\in\Zp$ and $j\in\N$ put $f_2(c_{i,j})=a_{i+1,j}$.
\end{enumerate}
Since the topology on $X_2$ is discrete the map $f_2$ is continuous.
\end{exmp}

\begin{figure}
\includegraphics[width=87.5mm]{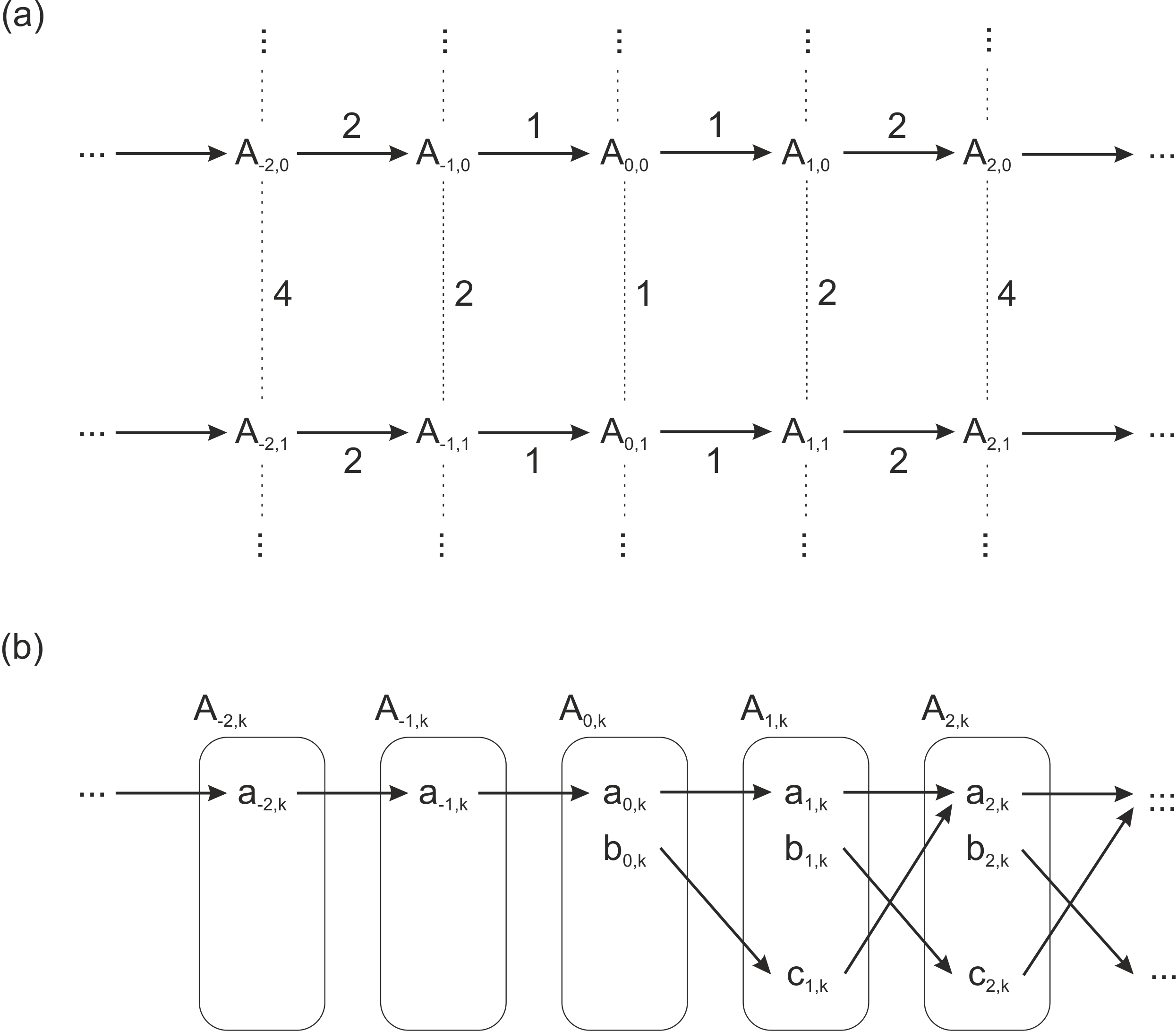}\caption{(a) The space $X_2$ and the map $f_2$; (b) a detailed view of one of the rows.}
\end{figure}

\begin{thm}
The map $f_2$ does not have the average shadowing property, but has the asymptotic average shadowing property.
\end{thm}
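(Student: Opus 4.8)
The plan is to prove the two assertions separately, in both cases exploiting two features of $(X_2,\dd_2)$: inside a cluster $A_{i,j}$ the only inexpensive deviation is the jump between $a_{i,j}$ and $b_{i,j}$, of size $2^{-j}$, after which a single application of $f_2$ lands on $c_{i+1,j}$, a point at distance $1$ from the $a$-trajectory; and any change of column or row costs at least the gap between neighbouring clusters, which is at least $1$ and in fact grows like $2^{|i|}$.

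For the failure of the average shadowing property I would fix $\eps=\tfrac14$ and, given $\delta>0$, choose $j$ with $2^{-j}<\delta$. Inside row $j$ I would take the pseudo-orbit that repeats the pattern $a_{m,j},b_{m+1,j},c_{m+2,j}$ with the column advancing by one at each step; its only nonzero errors are the jumps from an $a$-point to the neighbouring $b$-point, each equal to $2^{-j}$ and occurring once in every three steps. Since this error pattern is periodic, the average over every window of length $n$ is at most $2^{-j}<\delta$ independently of the starting index, so the sequence is a $\delta$-average-pseudo-orbit, and it contains a $c$-point at density $\tfrac13$. I would then observe that any $z$ shadowing it in average must eventually occupy the same column of row $j$ (a mismatch in column or row produces distances growing like $2^{i}$, hence an infinite average), while a genuine orbit of $f_2$ passes through a $c$-point at most once; therefore $f_2^i(z)$ is an $a$-point whenever the pseudo-orbit is at a $c$-point, forcing the average error to be at least $\tfrac13>\eps$.

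For the asymptotic average shadowing property, let $\set{x_i}_{i=0}^\infty$ be an asymptotic average pseudo-orbit, let $m_i$ and $j_i$ denote the column and row of $x_i$, and put $e_i=\dd_2(f_2(x_i),x_{i+1})$, so that $\tfrac1n\sum_{i<n}e_i\to0$. Because every path between clusters crosses at least $|m_{i+1}-m_i-1|$ unit-length column gaps, one has $|m_{i+1}-m_i-1|\le e_i$ and hence $m_n\ge m_0+n-\sum_{i<n}e_i=n-o(n)\to\infty$. Call step $i$ \emph{cluster-bad} if $x_{i+1}$ leaves the forced cluster $A_{m_i+1,\,j_i}$; reaching any other cluster from $A_{m_i+1,\,j_i}$ costs at least $2^{m_i}$ once $m_i\ge0$, so $e_i\ge 2^{m_i}$ at every cluster-bad step. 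The step I expect to be the real obstacle is to deduce that there are only finitely many cluster-bad steps: if they occurred at times $t_k\to\infty$ then, using $m_{t_k}=t_k-o(t_k)$, we would get $\tfrac1{t_k+1}\sum_{i\le t_k}e_i\ge 2^{m_{t_k}}/(t_k+1)\to\infty$, contradicting $\tfrac1n\sum_{i<n}e_i\to0$.

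It then follows that there is $T_0$ with $j_i=j^\ast$ and $m_i=m_{T_0}+(i-T_0)$ for all $i\ge T_0$, the pseudo-orbit deviating from a single cluster-path only within clusters. I would shadow it by $z=a_{m_{T_0}-T_0,\,j^\ast}$, so that $f_2^i(z)=a_{m_i,j^\ast}$ and the error at time $i\ge T_0$ equals $0$, $2^{-j^\ast}$ or $1$ according as $x_i$ is an $a$-, $b$- or $c$-point. To finish it suffices to check that the $b$- and $c$-points have density zero: each entry into a $b$-point costs at least $2^{-j^\ast}$, so the number of $b$-points up to time $n$ is at most $2^{j^\ast}\sum_{i<n}e_i+O(1)=o(n)$; and each $c$-point is reached either from a $b$-point or, at the cost of an error equal to $1$, from an $a$- or $c$-point, so the number of $c$-points is bounded by the number of $b$-points plus $\sum_{i<n}e_i=o(n)$. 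Together with the vanishing contribution of the finitely many indices below $T_0$, this yields $\tfrac1n\sum_{i<n}\dd_2(f_2^i(z),x_i)\to0$, which is exactly asymptotic average shadowing.
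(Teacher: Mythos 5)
Your proof is correct, and it follows the same overall architecture as the paper's: a cheap average-pseudo-orbit in a far-out row that visits $c$-points with positive density (unreachable on average by genuine orbits, which are eventually all $a$-points), and, for the positive direction, the observation that an asymptotic average pseudo-orbit must eventually follow a single cluster path and is then asymptotically shadowed in average by the corresponding $a$-point. The differences are in the execution, and one of them is worth noting. For the failure of average shadowing the paper uses the alternating pseudo-orbit $b_{0,k},c_{1,k},b_{2,k},c_{3,k},\ldots$ (so $c$-points have density $1/2$) and argues only $a_{0,k}$ and $b_{0,k}$ are candidates; your period-three pattern with $c$-density $1/3$ works just as well against $\eps=1/4$. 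For the finiteness of cluster-jumps, the paper pairs consecutive jump times $r_{i-1}<r_i$ and proves the combined cost of the two jumps is at least $2^{\lfloor (r_i-r_{i-1})/2\rfloor}\ge (r_i-r_{i-1})/2$, then sums these telescoping gaps to force the Ces\`aro average of the errors above $1/4$. Your route -- first deriving $|m_{i+1}-m_i-1|\le e_i$ from the fact that every column gap costs at least $1$, concluding $m_n\ge n-o(n)$, and then observing that any single late cluster-jump at time $t$ already costs $2^{m_t}\ge 2^{t-o(t)}$, which alone blows up the average -- is arguably cleaner and avoids the pairing argument entirely. Finally, where the paper bounds the shadowing error by $(2^q+1)$ times the pseudo-orbit error via a case table, you instead show directly that the $b$- and $c$-points have density zero along the tail; both are valid, and yours makes the mechanism (why the error at each time is $0$, $2^{-j^\ast}$ or $1$, and why the bad times are sparse) somewhat more transparent. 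The only caveat, inherited from the paper's construction rather than introduced by you, is the degenerate row $j=0$ where $\dd_{i,0}(b_{i,0},c_{i,0})=0$; your estimate ``each entry into a $b$-point costs at least $2^{-j^\ast}$'' uses $\min(2^{-j^\ast},1-2^{-j^\ast})=2^{-j^\ast}$, which requires $j^\ast\ge 1$, so you should either exclude that row or note the bound $\min(2^{-j^\ast},1-2^{-j^\ast})$ explicitly.
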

\begin{proof}
\emph{Average shadowing property.} Let $k\in\N$ be fixed. Consider the sequence $\{z_i\}_{i=0}^\infty=\{b_{0,k},c_{1,k},b_{2,k},c_{3,k},\ldots\}$. Since for every $k\in\N$ we have $\frac{1}{n}\sum_{i=0}^{n-1}\dd_2(f_2(x_{i+k}),x_{i+k+1})\leq\frac{n}{n2^k}=\frac{1}{2^k}$ this sequence is a $\frac{1}{2^{k-1}}$-average-pseudo-orbit. Notice that $\{z_i\}_{i=0}^\infty$ may not be shadowed in average by any point except for $a_{0,k}$ or $b_{0,k}$, as $\{z_i\}_{i=0}^\infty$ maintains a positive distance from the orbit of any other point. Note also that $\frac{1}{n}\sum_{i=0}^{n-1}\dd_2(f_2(a_{0,k}),z_i)\geq\lfloor\frac{n}{2}\rfloor/n\geq\frac{1}{4}$. The possibility of shadowing by $b_{0,k}$ is excluded in an analogous way. Therefore, for $\varepsilon=1/5$ there is no $\delta>0$ for which every $\delta$-average-pseudo-orbit would be $\varepsilon$-shadowed in average by some point in $X_2$.

\noindent\emph{Asymptotic average shadowing property.} Let $\{z_i\}_{i=0}^\infty$ be an asymptotic-average pseudo-orbit of $f_2$. Consider all the indices $r_i$ for which the points $f_2(z_{r_i})$ and $z_{r_i+1}$ belong to different spaces $A_{i,j}$. Let us first assume that there are infinitely many such indices. In this case these indices can be written as an infinite increasing sequence $\{r_i\}_{i=0}^\infty$. Put $r_{-1}=-1$. Observe that the structure of $X_2$ guarantees that for every $i\in\Zp$ we have $\dd_2(f_2(z_{r_{i-1}}),z_{r_{i-1}+1})+\dd_2(f_2(z_{r_i}),z_{r_i+1})\geq 2^{\lfloor(r_i-r_{i-1})/2\rfloor}$. Consequently for every $k\in\N$ we have
\begin{eqnarray}
&&\limsup_{k\rightarrow\infty}\frac{1}{r_k+1}\sum_{i=0}^{r_k}\dd_2(f_2(z_k),z_{k+1})\nonumber\\
&&\quad\quad\quad\geq\frac{1}{2}\limsup_{k\rightarrow\infty}\frac{1}{r_k+1}\left(\sum_{i=0}^{r_k}\dd_2(f_2(z_k),z_{k+1})+\sum_{i=1}^{r_k-1}\dd_2(f_2(z_k),z_{k+1})\right)\nonumber\\
&&\quad\quad\quad=\frac{1}{2}\limsup_{k\rightarrow\infty}\frac{1}{r_k+1}\sum_{i=1}^{r_k}\left(\dd_2(f_2(z_{k-1}),z_k)+\dd_2(f_2(z_k),z_{k+1})\right)\label{eq:e2:long}\\
&&\quad\quad\quad=\frac{1}{2}\limsup_{k\rightarrow\infty}\frac{1}{r_k+1}\sum_{i=0}^{r_k}\left(\dd_2(f_2(z_{k-1}),z_k)+\dd_2(f_2(z_k),z_{k+1})\right)\nonumber\\
&&\quad\quad\quad\geq\frac{1}{2}\limsup_{k\rightarrow\infty}\frac{1}{r_k+1}\sum_{i=0}^k2^{\lfloor(r_i-r_{i-1})/2\rfloor}\nonumber
\end{eqnarray}
By the fact that for every $n\in\N$ we have the inequality $2^{\lfloor n/2\rfloor}\geq n/2$ and the fact that the $k+1$ terms $r_0-r_{-1},\ldots,r_k-r_{k-1}$ add up to $r_k+1$, we obtain by \eqref{eq:e2:long} that
$$
\limsup_{k\rightarrow\infty}\frac{1}{r_k+1}\sum_{i=0}^{r_k}\dd_2(f_2(z_k),z_{k+1})\geq\frac{1}{2}\limsup_{k\rightarrow\infty}\frac{1}{r_k+1}\cdot\frac{r_k+1}{2}=\frac{1}{4}.
$$
Since this would contradict the fact that $\{z_i\}_{i=0}^\infty$ is an asymptotic average pseudo-orbit, we conclude that there are only finitely many indices $r_i$, and therefore there exist $p\in\Z$ and $q,k\in\N$ such that for every $n\geq k$ the points $f_2^n(a_{p,q})$ and $z_n$ belong to the same space $A_{p+n,q}$.

In order to prove that the point $a_{p,q}$ asymptotically shadows in average $\{z_i\}_{i=0}^\infty$ observe that every positive term in the sum $\sum_{i=k}^n\dd_2(f_2(z_i),z_{i+1})$ adds at most $2^q+1$ times its value to the sum $\sum_{i=k}^n\dd_2(f_2^i(a_{p,q}),z_i)$.
Strictly speaking, if $\dd_2(f_2(z_j),z_{j+1})>0$ for some $j\geq k$, then, assuming $f_2(z_{j+1})=z_{j+2}$, the following situations are possible:
$$\begin{array}{cc|c|cc}
f_2(z_j)&z_{j+1}&\dd_2(f_2(z_j),z_{j+1})&\dd_2(f_2^{j+1}(a_{p,q}),z_{j+1})&\dd_2(f_2^{j+2}(a_{p,q}),z_{j+2})\\
\hline
a_{p+j+1,q}&b_{p+j+1,q}&1/2^q&1/2^q&1\\
a_{p+j+1,q}&c_{p+j+1,q}&1&1&0\\
b_{p+j+1,q}&a_{p+j+1,q}&1/2^q&0&0\\
b_{p+j+1,q}&c_{p+j+1,q}&1-1/2^q&1&0\\
c_{p+j+1,q}&a_{p+j+1,q}&1&0&0\\
c_{p+j+1,q}&b_{p+j+1,q}&1-1/2^q&1/2^q&1\\
\end{array}$$
Starting from $z_{j+3}$ the sequence $\{z_i\}_{i=0}^\infty$ will follow the trajectory of $a_{p,q}$ until the next deviation.

If $f_2(z_{j+1})\neq z_{j+2}$ then the deviation at $j+1$ contributes only the fourth column of the above table (instead of fourth and fifth) to the total error $\sum_{i=k}^n\dd_2(f_2^i(a_{p,q}),z_i)$.

Therefore
\begin{eqnarray*}
\lim_{s\rightarrow\infty}\frac{1}{s}\sum_{i=0}^{s-1}\dd_2(f_2^i(a_{p,q}),z_i)&=&\lim_{s\rightarrow\infty}\frac{1}{s}\sum_{i=k}^{s-1}\dd_2(f_2^i(a_{p,q}),z_i)\\
&\leq&(2^q+1)\lim_{s\rightarrow\infty}\frac{1}{s}\sum_{i=k}^{s-1}\dd_2(f_2(z_i),z_{i+1})=0.
\end{eqnarray*}
This proves that the map $f_2$ has the asymptotic average shadowing property.
\end{proof}

\section{Open problems}
In this section we collect a few open questions for further research.

If $f\colon X\ra X$ is a continuous map of a compact metric space with the shadowing property, then so does $f$ restricted to its nonwandering set $\Omega(f)$, e.g. see \cite[Theorem 3.4.2]{AH} extended by \cite[Lemma 1]{Moothathu}. Motivated by this result and Theorem~\ref{thm:almostspec_measure} above we raise the following question.

\begin{que}\label{Q:measure_center}
Assume that $(X,f)$ has the almost specification (asymptotic average shadowing property, average shadowing property)
and let $A$ be any compact invariant set containing the measure center of $f$. Does the dynamical system $(A,f|_A)$ has the almost specification
(asymptotic average shadowing property, average shadowing property)?
\end{que}

Note that if the answer to the above question is positive, then we can get rid of surjectivity assumption from
Theorems \ref{thm:almost-spec-implies-aasp} and \ref{thm:aasp-implies-asp}

By Theorem~\ref{thm:equi_asp} we see that any minimal system with the average shadowing property has to be weakly mixing. We are, however, unable to provide any example of non-singleton minimal system with almost specification or generalized shadowing.

\begin{que}
Is there a nontrivial minimal system $(X,f)$ with the almost specification property or the (asymptotic) average shadowing property?
\end{que}

We proved that almost specification implies asymptotic average shadowing, which in turn implies average shadowing. It is natural to ask if the converse is true. We conjecture it is not.
\begin{que}\label{q:converse_implications}
Does the (asymptotic) average shadowing property imply the almost specification property?
Does the average shadowing property imply asymptotic average shadowing property?
\end{que}

On the other hand, we provided examples that when the space is not compact, then there are no implications between the average shadowing property and the asymptotic average shadowing property; their construction relied heavily on the fact that the space was unbounded. In this context it is natural to ask the following.
\begin{que}
Can examples analogous to Examples~\ref{example1} and \ref{example2} be presented in a bounded metric space?
\end{que}

In view of Theorems \ref{thm:measure+asp-imply-wm} and the well-known properties of maps with the specification property we conjecture that the answer to the following question is positive.
\begin{que}
Does the (asymptotic) average shadowing property or the almost specification property imply topological mixing or positive topological entropy provided that the measure center is the whole space?
\end{que}

It is easy to see that a factor of a system with (almost) specification property also has (almost) specification. Any sofic shift
which is not of finite type shows that a factor of a system with shadowing does not necessarily have shadowing. We do not know if every factor of a system
with the (asymptotic) average shadowing property also has this property. Any counterexample would show that (asymptotic) average shadowing property does not imply almost specification.

\begin{que}\label{q:factors}
Is the (asymptotic) average shadowing property or the almost specification property inherited by factors?
\end{que}

We are also unable to answer the following question related to Theorem \ref{thm:dense_minimal}.
\begin{que}\label{q:dense_minimal}
Is it true that if a compact system has the (asymptotic) average shadowing property and an invariant measure of full support, then
the minimal points are dense?
\end{que}

\section*{Acknowledgements}

Research of Piotr Oprocha was supported by Narodowe Centrum Nauki (National Science Center) in Poland, grant no. DEC-2011/03/B/ST1/00790.
Research of Dominik Kwietniak was supported by the Polish Ministry of Science and Higher Education grant Iuventus Plus IP~$2011028771$.


\end{document}